\numberwithin{equation}{section}
\newcommand{\field}[1]{\mathbb{#1}}
\newcommand{\R}{\field{R}}
\newcommand{\C}{\field{C}} 
\newcommand{\N}{\field{N}}
 \def\cC{\mathscr{C}}
\def\cL{\mathscr{L}}
\def\cO{\mathscr{O}}
\def\mO{\mathcal{O}}
\newcommand{\boldsym}[1]{\boldsymbol{#1}}
\newcommand\bn{\boldsym{n}}
\def\Im{{\rm Im}}
\newcommand{\cali}[1]{\mathscr{#1}}
\newcommand{\cH}{\cali{H}}
\DeclareMathOperator{\Ker}{Ker}
\DeclareMathOperator{\Dom}{Dom}
\DeclareMathOperator{\rank}{rank}
\DeclareMathOperator{\supp}{supp}
\DeclareMathOperator{\vol}{vol}
\newcommand{\imat}{\sqrt{-1}}
\newcommand{\om}{\omega}
\newcommand{\ol}{\overline}
\newcommand{\ddbar}{\overline\partial}
\newcommand{\dbar}{\partial}
\newtheorem{thm}{Theorem}[section]
\newtheorem{lemma}[thm]{Lemma}
\newtheorem{prop}[thm]{Proposition}
\newtheorem{cor}[thm]{Corollary}
\theoremstyle{definition}
\newtheorem{rem}[thm]{Remark}
\theoremstyle{definition}
\newtheorem{defn}[thm]{Definition}
\newcommand{\be}{\begin{eqnarray}}
\newcommand{\ee}{\end{eqnarray}}
\newcommand{\ov}{\overline}
\newcommand{\wi}{\widetilde}
\newcommand{\comment}[1]{}
\begin{document}  
\title 
{The growth of dimension of cohomology of 
semipositive line bundles on Hermitian manifolds} 
    
\author{Huan Wang}
\address{School of Mathematical Sciences, Shanghai Key Laboratory of PMMP, East China Normal University, 
	Dong Chuan Road 500, Shanghai 200241, P. R. China} 
\email {huanwang2016@hotmail.com, hwang@math.ecnu.edu.cn} 
\keywords{semipositive line bundles, cohomology, fundamental estimates, q-convex manifolds, pseudo-convex domains, weakly 1-complete manifolds, complete manifolds, singular Hermitian metric} 
\date{23. Oct. 2018} 
\maketitle  
  \begin{abstract}   
  	In this paper, we study the dimension of cohomology of 
	semipositive line bundles over Hermitian manifolds, and obtain an 
	asymptotic estimate for the dimension of the space of harmonic $(0,q)$-forms 
	with values in high tensor powers of a semipositive line bundle when 
	the fundamental estimate holds. As applications, we estimate the dimension 
	of cohomology of semipositive line bundles on $q$-convex manifolds, 
	pseudo-convex domains, weakly $1$-complete manifolds and complete manifolds. 
	We also obtain the estimate of cohomology on compact manifolds with semipositive 
	line bundles endowed 
	with a Hermitian metric with analytic singularities and the related vanishing theorems.  
  \end{abstract}
                 
\tableofcontents 
 \section{Introduction}  
The purpose of this paper is to prove asymptotic estimates
for the cohomology of semipositive line bundles over various
non-compact manifolds. We generalize the asymptotics obtained 
by Berndtsson \cite{BB:02} in the compact case, which in turn
refine the holomorphic Morse inequalities of Demailly \cite{Dem:85}.

Let $X$ be a compact complex manifold, let $L$ be 
a holomorphic line bundle and $E$ be a holomorphic vector
bundle on $X$. 
The Dolbeault cohomology $H^{0,q}(X,L^k\otimes E)$ plays a fundamental role
in algebraic and complex geometry and is linked to the structure 
of the manifold, cf.\ \cite{Dem, Dem:85, MM}.
If $L$ is a positive line bundle, 
$H^{0,q}(X,L^k\otimes E)=0$ for $q\geq1$ and $k$ large enough,
by the Kodaira-Serre vanishing theorem (see e.g.\ \cite [Theorem\,1.5.6]{MM})
and this can be used to prove that global holomorphic sections of $L^k\otimes E$
give a projective embedding of $X$ for large $k$ (Kodaira embedding theorem).

Assume now that $L$ is semipositive. The solution of the Grauert-Riemenschneider conjecture
\cite{GR:70}
by Siu \cite{Sil:84} and Demailly \cite{Dem:85} shows that $\dim H^{0,q}(X,L^k\otimes E)=o(k^n)$ 
as $k\to\infty$ for $q\geq1$. This can be used to show that $X$ is a Moishezon manifold,
if $(L,h^L)$ is moreover positive at least at one point.
Berndtsson \cite{BB:02} showed that we have actually $\dim H^{0,q}(X,L^k\otimes E)=O(k^{n-q})$ 
as $k\to\infty$ for $q\geq1$. 
  
We will consider first a very general situation
where we can prove the decay of the cohomology groups as above.
Let $(X,\omega)$ be a Hermitian manifold of dimension $n$. 
Let $dv_X:=\omega^n/n!$ be the volume form on $X$.
Let $(L,h^L)$ and $(E,h^E)$ be holomorphic Hermitian line bundles on $X$,
where $L$ is a line bundle.  
We denote by $(L^2_{0,q}(X,L^k \otimes E),\|\cdot\|)$ the space
of square integrable $(0,q)$-forms with values in $L^k \otimes E$ 
with respect to the $L^2$ inner product induced by the above data.
We denote by $\ddbar^E_k$ the maximal extension of the Dolbeault operator
on $L^2_{0,\bullet}(X,L^k \otimes E)$ and by 
$\ddbar^{E*}_k$ its Hilbert space adjoint.
Let $\cH^{0,q}(X,L^k \otimes E)$ be the space 
of harmonic $(0,q)$-forms with values in $L^k \otimes E$ on $X$.

For a given $0\leq q\leq n$, we say that 
\textbf{the concentration condition holds in bidegree $(0,q)$ 
for harmonic forms with values in $L^k\otimes E$ for large $k$}, 
if there exists a compact subset $K\subset X$ and $C_0>0$ such 
that for sufficiently large $k$, we have
\begin{equation}
\|s\|^2\leq C_0\int_K |s|^2 dv_X,
\end{equation}
for $s\in \Ker(\ddbar^E_k)\cap \Ker(\ddbar^{E*}_k)\cap 
L^2_{0,q}(X,L^k\otimes E)$. The set $K$ is called the exceptional compact set 
of the concentration. 
We say that \textbf{the fundamental estimate holds in 
bidegree $(0,q)$ for forms with values in $L^k\otimes E$ for large $k$}, 
if there exists a compact subset $K\subset X$ and $C_0>0$ 
such that for sufficiently large $k$, we have 
\begin{equation}
	\|s\|^2\leq C_0(\|\ddbar^E_k s\|^2+\|\ddbar^{E,*}_k s\|^2+\int_K |s|^2 dv_X),
\end{equation}
for $s\in \Dom(\ddbar^E_k)\cap \Dom(\ddbar^{E*}_k)\cap L^2_{0,q}(X,L^k\otimes E)$.
The set $K$ is called the exceptional compact set of the estimate. 
   
The first main result of this paper is an asymptotic estimate for 
$L^2$-cohomology with semipositive line bundles over Hermitian manifolds.

\begin{thm}\label{L2FE}
    Let $(X,\omega)$ be a Hermitian manifold of dimension $n$. 
    Let $(L,h^L)$ and $(E,h^E)$ be holomorphic Hermitian line bundles on $X$. 
    Assume that for some $1\leq q \leq n$ the concentration condition holds 
    in bidegree $(0,q)$ for harmonic forms with values in $L^k\otimes E$ for large $k$.
    Assume that $(L,h^L)$ is semipositive on a neighbourhood of the exceptional compact set $K$. 
	Then there exists $C>0$ such that for sufficiently large $k$, we have
	\begin{eqnarray}\label{est1}
		\dim \cH^{0,q}(X,L^k\otimes E)&\leq& Ck^{n-q}.
	\end{eqnarray}
	The same estimate also holds for reduced $L^2$-Dolbeault cohomology groups,
	\begin{equation}\label{est2}
		\dim \overline{H}^{0,q}_{(2)}(X,L^k\otimes E)\leq Ck^{n-q}.
	\end{equation}
In particular, if the fundamental estimate holds in bidegree $(0,q)$ 
for forms with values in $L^k\otimes E$ for large $k$, 
the same estimate holds for $L^2$-Dolbeault cohomology groups,
\begin{equation}
\dim H^{0,q}_{(2)}(X,L^k\otimes E)\leq Ck^{n-q}.
\end{equation}
\end{thm}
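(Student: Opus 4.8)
The plan is to reduce the dimension estimate for the harmonic space to a pointwise bound for the Bergman density of harmonic $(0,q)$-forms, and then to localize that bound to $K$ by means of the concentration condition; semipositivity of $(L,h^L)$ enters precisely in the pointwise bound. Let $B^k_q(x)$ denote the Bergman density of $\cH^{0,q}(X,L^k\otimes E)$, characterized basis-independently by $B^k_q(x)=\sup\{|s(x)|^2:\ s\in\cH^{0,q}(X,L^k\otimes E),\ \|s\|\leq1\}$. If $s_1,\dots,s_N$ is any orthonormal family in $\cH^{0,q}(X,L^k\otimes E)$, then $\sum_{j}|s_j(x)|^2\leq B^k_q(x)$ pointwise, and applying the concentration condition to each $s_j$ and summing gives
\[
N=\sum_{j}\|s_j\|^2\leq C_0\sum_{j}\int_K|s_j|^2\,dv_X\leq C_0\int_K B^k_q\,dv_X .
\]
Hence $\dim\cH^{0,q}(X,L^k\otimes E)\leq C_0\int_K B^k_q\,dv_X$, and everything is reduced to estimating $\int_K B^k_q\,dv_X$.

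The main step, and the expected obstacle, is the pointwise estimate $B^k_q(x)\leq C\,k^{\,n-q}$ for $q\geq 1$ and $x$ in a fixed neighbourhood of $K$ on which $(L,h^L)$ is semipositive. I would establish it following Berndtsson's method \cite{BB:02}: in geodesic coordinates centred at a point $x_0$ near $K$, after the dilation $z\mapsto z/\sqrt k$ the rescaled Kodaira Laplacian converges to a model harmonic oscillator on $\C^n$ whose frequencies are the curvature eigenvalues $\lambda_1(x_0),\dots,\lambda_n(x_0)$ of $(L,h^L)$. Because semipositivity forces every $\lambda_j\geq 0$, for $q\geq 1$ the leading $k^n$ density of the model Bergman projection in bidegree $(0,q)$ vanishes --- equivalently, the $k^n$-term of the holomorphic Morse inequalities, carried by the set where the curvature has exactly $q$ negative eigenvalues, is empty near $K$ --- and the first nonvanishing contribution, produced by the directions in which $\lambda_j=0$, is of order $k^{n-q}$. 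Granting this, compactness of $K$ yields $\int_K B^k_q\,dv_X\leq C\,k^{n-q}\vol(K)$, which together with the first paragraph proves \eqref{est1}.

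For the reduced cohomology I would invoke the weak Hodge decomposition in the $L^2$ setting,
\[
L^2_{0,q}(X,L^k\otimes E)=\cH^{0,q}(X,L^k\otimes E)\oplus\overline{\ran\,\ddbar^E_k}\oplus\overline{\ran\,\ddbar^{E*}_k},
\]
which holds for the Gaffney extension without any closed-range hypothesis and identifies $\overline{H}^{0,q}_{(2)}(X,L^k\otimes E)\cong\cH^{0,q}(X,L^k\otimes E)$ isometrically; thus \eqref{est2} is immediate from \eqref{est1}.

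Finally, assume the fundamental estimate in bidegree $(0,q)$. Restricted to $\Ker\ddbar^E_k\cap\Ker\ddbar^{E*}_k$ its first two terms vanish, so it reduces to the concentration condition and the hypotheses above are met; in particular $\cH^{0,q}(X,L^k\otimes E)$ is finite-dimensional. Combined with this finiteness, the fundamental estimate upgrades to the a priori inequality $\|s\|^2\leq C(\|\ddbar^E_k s\|^2+\|\ddbar^{E*}_k s\|^2)$ for $s$ orthogonal to the harmonic space, the standard consequence of which is that $\ran\,\ddbar^E_k$ is closed. The $L^2$-Dolbeault cohomology is then Hausdorff, so $H^{0,q}_{(2)}(X,L^k\otimes E)=\overline{H}^{0,q}_{(2)}(X,L^k\otimes E)\cong\cH^{0,q}(X,L^k\otimes E)$, which gives the last estimate.
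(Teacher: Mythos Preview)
Your approach is essentially the paper's: reduce via the concentration condition to $\dim\cH^{0,q}\leq C_0\int_K B^q_k\,dv_X$, then invoke the pointwise bound $B^q_k(x)\leq Ck^{n-q}$ on $K$ under semipositivity. The paper does not reprove the latter but quotes it as Theorem~\ref{Localmain}, a restatement of \cite[Theorem~1.1]{Wh:16}; your sketch of Berndtsson's rescaling argument is an adequate placeholder for that citation. For the reduced cohomology the paper also uses the weak Hodge decomposition \eqref{eq47}, and for the last clause it simply cites \cite[Theorem~3.1.8]{MM} to get $H^{0,q}_{(2)}=\overline H^{0,q}_{(2)}$ under the fundamental estimate, which is exactly the closed-range statement you outline.

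One genuine slip: your extremal characterization $B^k_q(x)=\sup\{|s(x)|^2:\|s\|\leq1\}$ is false for $(0,q)$-forms with $q\geq1$, because the fibre $\Lambda^{0,q}T^*_xX\otimes L^k_x\otimes E_x$ has dimension $\binom{n}{q}>1$. With that definition the inequality $\sum_j|s_j(x)|^2\leq B^k_q(x)$ for an orthonormal family can fail (take two orthonormal sections whose values at $x$ are orthogonal in the fibre). The paper defines $B^q_k(x)=\sum_{j\geq1}|s^k_j(x)|^2$ over a full orthonormal basis \eqref{e:Bergfcn}; this is the trace of the Bergman kernel on the diagonal, and it is for this quantity that both the partial-sum inequality and the $Ck^{n-q}$ bound of \cite{Wh:16} hold. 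Replace your sup by the sum and the argument goes through unchanged.
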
 
Note that holomorphic Morse inequalities
for the $L^2$-cohomology were obtained under the 
assumption that the fundamental estimate holds
in \cite[Theorem 3.2.13]{MM}. They can only deliver
an estimate $\dim H^{0,q}_{(2)}(X,L^k\otimes E)=o(k^{n})$ as $k\to\infty$.

A geometric situation when Theorem \ref{L2FE} can be applied is
the case of a semipositive line bundle on a complete K\"ahler manifold 
which polarizes the K\"ahler metric at infinity. 

 \begin{thm}\label{complete}
 	Let $(X,\omega)$ be a complete Hermitian manifold of dimension $n$. 
	Let $(L,h^L)$ and $(E,h^E)$ be holomorphic Hermitian line bundles on $X$. 
	Assume there exists a compact subset $K\subset X$ such 
	that $\sqrt{-1}R^{(L,h^L)}=\omega$ on $X\setminus K$ 
	and $(L,h^L)$ is semipositive on $K$. 
 	
 	Then there exists $C>0$ such that for any $q\geq 1$ and sufficiently large $k$, we have
 	\begin{equation}
 	\dim H_{(2)}^{0,q}(X,L^k\otimes E)\leq Ck^{n-q}.
 	\end{equation}
 \end{thm}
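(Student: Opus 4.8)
The plan is to deduce Theorem \ref{complete} from Theorem \ref{L2FE} by verifying its hypotheses for each $q\geq 1$: that $(L,h^L)$ is semipositive near the exceptional set, and that the \emph{fundamental estimate} holds in bidegree $(0,q)$ for large $k$. Establishing the fundamental estimate is enough to reach the conclusion for the non-reduced groups $H^{0,q}_{(2)}$; moreover it implies the concentration condition, since for harmonic $s$ (where $\ddbar^E_k s=\ddbar^{E,*}_k s=0$) it degenerates to $\norm{s}^2\leq C_0\int_K\abs{s}^2\,dv_X$. The semipositivity hypothesis is immediate: on $K$ it is assumed, while on $X\setminus K$ we have $\sqrt{-1}R^{(L,h^L)}=\omega>0$, so $(L,h^L)$ is semipositive on all of $X$, hence on a neighbourhood of whatever compact exceptional set the estimate produces. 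Thus the whole difficulty is concentrated in the fundamental estimate, after which Theorem \ref{L2FE} applies for each fixed $q\in\{1,\dots,n\}$ (the finitely many constants being combined into a single $C$).

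To prove the fundamental estimate I would first use completeness. Since $\omega$ is complete, the smooth compactly supported forms are dense in $\Dom(\ddbar^E_k)\cap\Dom(\ddbar^{E,*}_k)$ for the graph norm $s\mapsto(\norm{s}^2+\norm{\ddbar^E_k s}^2+\norm{\ddbar^{E,*}_k s}^2)^{1/2}$ (Andreotti--Vesentini/Gaffney; see \cite{MM}), so it suffices to prove the inequality for compactly supported smooth $s$. A second structural point is that $\sqrt{-1}R^{(L,h^L)}$ is a $d$-closed real $(1,1)$-form, so the hypothesis $\sqrt{-1}R^{(L,h^L)}=\omega$ on $X\setminus K$ forces $d\omega=0$ there: the metric is Kähler away from $K$, and the Hermitian torsion entering the Bochner--Kodaira--Nakano formula is supported near $K$. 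Applying that formula to such $s$ bounds $\norm{\ddbar^E_k s}^2+\norm{\ddbar^{E,*}_k s}^2$ below by the curvature pairing of $L^k\otimes E$ against $s$ together with a nonnegative term, minus contributions concentrated in a neighbourhood of $K$.

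The key point is then to extract, on $X\setminus K$, a lower bound for the $(0,q)$-Kodaira Laplacian that grows with $k$. Because $\sqrt{-1}R^{L^k}=k\omega$ there, all $n$ curvature eigenvalues with respect to $\omega$ equal $k$, and the Bochner--Lichnerowicz curvature operator $\dot R^{L^k}$ acting on $(0,q)$-forms has eigenvalue $qk$, while the contribution of the fixed bundle $(E,h^E)$ and of the ambient geometry of $\omega$ is of lower order. For $q\geq 1$ and $k$ large this produces a gap, yielding $\norm{\ddbar^E_k s}^2+\norm{\ddbar^{E,*}_k s}^2\geq \tfrac12\norm{s}^2-C\int_{K'}\abs{s}^2\,dv_X$ for a slightly enlarged compact set $K'$ with $C$ independent of $k$; this is exactly the fundamental estimate in bidegree $(0,q)$ with exceptional set $K'$.

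The hard part will be making this gap uniform in $k$ and simultaneously valid for all $q\geq 1$. The subtlety is that in bidegree $(0,q)$ the bare Nakano curvature term $\langle[\sqrt{-1}R^{L^k\otimes E},\Lambda]s,s\rangle$ is \emph{not} sign-positive, so the gap cannot come from the curvature commutator alone but must be read off from the full Kodaira Laplacian, through the nonnegative $\partial$-Laplacian (equivalently the Bochner Laplacian) combined with the positive term $\dot R^{L^k}=qk$; and one must dominate the zeroth-order torsion and ambient-curvature contributions of $\omega$ against the growing term $qk$. The completeness of $\omega$ and the exact polarization $\sqrt{-1}R^{(L,h^L)}=\omega$ at infinity are precisely what make the growing term dominate and confine the uncontrolled geometry to the compact region, closing the estimate. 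With the fundamental estimate in hand, Theorem \ref{L2FE} gives $\dim H^{0,q}_{(2)}(X,L^k\otimes E)\leq Ck^{n-q}$ for every $q\geq 1$ and all large $k$.
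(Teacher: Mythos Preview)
Your plan coincides with the paper's: establish the fundamental estimate for each $q\geq 1$ (density of compactly supported forms via completeness, a spectral gap growing like $k$ on $X\setminus K$ from a Bochner-type inequality, then a cutoff to produce a compact exceptional set $K'\supset K$), and invoke Theorem~\ref{L2FE}. The only difference is how the gap is obtained on $X\setminus K$. You correctly flag the sign problem for $[\sqrt{-1}R^{L^k},\Lambda]$ on $(0,q)$-forms and propose to get around it via a Bochner--Lichnerowicz identity; the paper instead sets $F:=E\otimes K_X^*$, identifies $\Omega^{0,q}(X,L^k\otimes E)\cong\Omega^{n,q}(X,L^k\otimes F)$, and applies Nakano's inequality on $(n,q)$-forms, where $[\sqrt{-1}R^L,\Lambda]=q\cdot\mathrm{Id}$, obtaining $\|\ddbar^E_ks\|^2+\|\ddbar^{E*}_ks\|^2\geq(kq-C_E)\|s\|^2$ for $s\in\Omega^{0,q}_0(X\setminus K,L^k\otimes E)$ directly. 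This device shifts the sign difficulty into the curvature of the fixed bundle $K_X^*$, which is absorbed into the $O(1)$ constant $C_E$; it is cleaner than unpacking the full Lichnerowicz formula but ultimately equivalent. One small caution: your parenthetical ``$\partial$-Laplacian (equivalently the Bochner Laplacian)'' is not accurate---these are different nonnegative operators occurring in different Weitzenb\"ock identities, and it is the Bochner Laplacian (not $\square^\partial$) that pairs with a curvature term equal to $qk$ on $(0,q)$-forms.
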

Note that by \cite[Theorem 3.3.5]{MM} we have 
$$
\dim H_{(2)}^{0,0}(X,L^k\otimes K_X)\geq 
\frac{k^n}{n!}\int_X c_1(L,h^L)^n
+o(k^n)\,,\:\:k\to\infty
$$ 
in the situation of Theorem \ref{complete}.
Moreover, the Bergman kernel of $H_{(2)}^{0,0}(X,L^k\otimes K_X)$
has an asymptotic expansion in powers of $k$ on the set where
$c_1(L,h^L)>0$ by \cite[Theorem 1.7]{HM:14}.
   
We consider further the case of $q$-convex manifolds 
as application of Theorem \ref{L2FE}. For general holomorphic
Morse inequalities on $q$-convex manifolds see \cite[Theorem 3.5.8]{MM}.

\begin{thm}\label{qconvexpos}
	Let $X$ be a $q$-convex manifold of dimension $n$, 
	and let $(L,h^L), (E,h^E)$ be holomorphic Hermitian line 
	bundles on $X$. Let $\varrho$ be an exhaustion function of 
	$X$ and $K$ the exceptional set. Let $(L,h^L)$ be semipositive 
	on a sublevel set $X_c:=\{x\in X: \varrho(x)<c \}$ satisfying 
	$K\subset X_c$\,, and let $(L,h^L)$ be positive on $X_c\setminus K$. 
	
	Then there exists $C>0$ such that for any $j\geq q$ and  $k\geq 1$, we have
	\begin{equation}\label{e:sc}
	\dim H^j(X,L^k\otimes E)\leq Ck^{n-j}.
	\end{equation}
\end{thm}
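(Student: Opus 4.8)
The plan is to derive Theorem~\ref{qconvexpos} from Theorem~\ref{L2FE}, by first reducing the sheaf cohomology on $X$ to the cohomology of a relatively compact sublevel set and then realizing the latter as an $L^2$-Dolbeault cohomology group for a suitably completed metric. To begin, I would choose levels $c_0<c_1<c$ with $K\subset X_{c_0}$, so that $\varrho$ is $q$-convex on $\{c_0\le\varrho<c\}\subset X_c\setminus K$. By the Andreotti--Grauert theory for $q$-convex manifolds (cf.\ \cite[\S3.5]{MM}), the groups $H^j(X,L^k\otimes E)$ are finite-dimensional for $j\ge q$ and the restriction maps yield isomorphisms $H^j(X,L^k\otimes E)\cong H^j(X_c,L^k\otimes E)$ for all $j\ge q$. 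It therefore suffices to bound $\dim H^j(X_c,L^k\otimes E)$ for $q\le j\le n$; moreover, since for each fixed $k$ these dimensions are finite and there are only finitely many $j$ in this range, it is enough to establish the bound for large $k$ and then enlarge the constant $C$ to absorb the finitely many remaining pairs $(j,k)$ with $k$ small, using $k^{n-j}\ge1$.

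Next I would equip $X_c$ with a complete Hermitian metric $\widetilde\omega$ and a modified bundle metric $\widetilde h^L=h^Le^{-\chi(\varrho)}$, where $\chi$ is smooth, convex and increasing, constant on a neighbourhood $U$ of $K$ with $U\subset X_{c_1}$, and with $\chi'(\varrho)\to+\infty$ as $\varrho\to c^-$; the metric $\widetilde\omega$ is chosen to agree with $\omega$ on $U$ and to blow up near $\{\varrho=c\}$ so that $(X_c,\widetilde\omega)$ is complete. On this complete $q$-convex manifold the $L^2$-Dolbeault cohomology computes the sheaf cohomology in the relevant range, $H^{0,j}_{(2)}(X_c,L^k\otimes E)\cong H^j(X_c,L^k\otimes E)$ for $j\ge q$; this identification, which rests on the $q$-convexity of $\varrho$ along the collar together with the completeness of $\widetilde\omega$, is exactly the mechanism underlying \cite[Theorem~3.5.8]{MM}.

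It then remains to verify the hypotheses of Theorem~\ref{L2FE} for $(X_c,\widetilde\omega)$ and $(L,\widetilde h^L)$. I would take as exceptional compact set a neighbourhood $K'\subset U$ of $K$ on which $\chi$ is constant, so that $\widetilde h^L=h^L$ there; since $h^L$ is semipositive on $X_c$, the semipositivity hypothesis of Theorem~\ref{L2FE} near $K'$ is satisfied. For the fundamental estimate in bidegree $(0,j)$ with $j\ge q$, I would apply the Bochner--Kodaira--Nakano inequality, whose curvature term involves $\sqrt{-1}R^{(L,\widetilde h^L)}=\sqrt{-1}R^{(L,h^L)}+\sqrt{-1}\partial\ddbar\chi(\varrho)$. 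Outside $K'$ the positivity needed to absorb the fixed torsion of $\widetilde\omega$ and the curvature of $E$ comes from the positivity of $(L,h^L)$ on $X_c\setminus K$ --- making $k\,\sqrt{-1}R^{(L,h^L)}$ dominant for large $k$ --- together with the term $\chi'(\varrho)\sqrt{-1}\partial\ddbar\varrho$, which near $\{\varrho=c\}$ blows up commensurately with $\widetilde\omega$ and, by the $q$-convexity of $\varrho$, acts positively on $(0,j)$-forms exactly when $j\ge q$. This yields the fundamental estimate in bidegrees $(0,j)$, $j\ge q$, with exceptional set $K'$, for all large $k$, and Theorem~\ref{L2FE} gives $\dim H^{0,j}_{(2)}(X_c,L^k\otimes E)\le Ck^{n-j}$. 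Combined with the two isomorphisms above, this yields \eqref{e:sc}.

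The main obstacle is the simultaneous calibration of $\widetilde\omega$ and $\chi$ near the boundary $\{\varrho=c\}$: completing the metric forces both $\widetilde\omega$ and the weight to degenerate along the collar, and one must balance them so that the curvature operator $[\sqrt{-1}R^{(L,\widetilde h^L)},\Lambda_{\widetilde\omega}]$ remains bounded below there, uniformly in $k$, while the torsion of the non-K\"ahler metric $\widetilde\omega$ stays controlled. It is precisely at this complete-metric ``infinity'' that only the $q$-convexity of $\varrho$ is available, which is what restricts the fundamental estimate --- and hence the whole argument --- to bidegrees $j\ge q$; the positivity of $(L,h^L)$ on $X_c\setminus K$ is what bridges the gap between $K$ and the collar, and keeping $\chi$ constant near $K$ is what preserves the semipositivity of $\widetilde h^L$ required by Theorem~\ref{L2FE}. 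This balancing is carried out as in the construction underlying \cite[Theorem~3.5.8]{MM}.
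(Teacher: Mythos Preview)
Your route differs from the paper's in two ways, and both differences introduce complications the paper avoids. First, the paper does not complete the metric on $X_c$: it works with the $\ddbar$-Neumann Laplacian on the domain $X_c$ (after shrinking $c$ slightly so that $\overline{X}_c\subset X_v$ with $(L,h^L)>0$ on $X_v\setminus K$), establishing the fundamental estimate directly on $B^{0,j}(X_c,L^k\otimes E)$ and passing to $\Dom(\ddbar^E_k)\cap\Dom(\ddbar^{E*}_{k,H})$ by density. The identification $H^{0,j}_{(2)}(X_c,\cdot)\cong H^{0,j}(X_v,\cdot)\cong H^j(X,\cdot)$ then follows from H\"ormander's theorem and Andreotti--Grauert, exactly the chain used in the proof of Theorem~\ref{qconvexpshthm}. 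Your repeated appeal to \cite[Theorem~3.5.8]{MM} for a complete-metric $L^2$--sheaf identification and for the ``balancing'' of $\widetilde\omega$ and $\chi$ near $\{\varrho=c\}$ is misdirected: that theorem is proved via the Neumann framework on a domain with boundary, so the torsion-control obstacle you flag simply does not arise there, and your proposal leaves the calibration step without a supporting reference.

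Second, the paper does not modify $h^L$ at all. The whole point of Theorem~\ref{qconvexpos}, in contrast to Theorem~\ref{qconvexpshthm}, is that the assumed positivity of $(L,h^L)$ on $X_c\setminus K$ already supplies the curvature needed on the collar $X_v\setminus\overline{X}_u$: the argument of Lemma~\ref{keylem} runs with the original $h^L$, since $k\sqrt{-1}R^L$ dominates the bounded torsion and $R^E$ terms on this relatively compact annulus once $k$ is large. Your weight $\chi(\varrho)$ is therefore unnecessary for the estimate and is precisely what forces you into the boundary calibration you identify as the main obstacle. The paper's route is shorter: choose $\omega$ from Lemma~\ref{lowbd_rho_lem}, obtain the fundamental estimate as in Proposition~\ref{1coxFE} but with $h^L$ unmodified, apply Theorem~\ref{L2FE} on $X_c$, and transfer via the Neumann--H\"ormander--Andreotti--Grauert isomorphisms.
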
 

We denote here by $H^j(X,L^k\otimes E)$ the cohomology
groups of the sheaves $\mathcal{O}_X(L^k\otimes E)$ 
of holomoprhic sections of $L^k\otimes E$.
They are isomorphic to the Dolbeault cohomology
groups $H^{0,j}(X,L^k\otimes E)$.

In the case of $1$-convex 
manifolds estimate \eqref{e:sc} holds
without additional hypothesis.
\begin{thm}\label{T:1c}
	Let $X$ be a $1$-convex manifold of dimension $n$, 
	and let $(L,h^L)$ and $(E,h^E)$ be holomorphic Hermitian 
	line bundles on $X$. Let $\varrho$ be an exhaustion function 
	of $X$ and $K$ the exceptional set. Let $(L,h^L)$ be semipositive 
	on a sublevel set $X_c:=\{x\in X: \varrho(x)<c \}$ satisfying $K\subset X_c$. 	
	Then there exists $C>0$ such that for any $j\geq 1$ and $k\geq 1$
	the estimate \eqref{e:sc} holds.
\end{thm}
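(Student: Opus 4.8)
The plan is to deduce Theorem \ref{T:1c} from the general $q$-convex estimate of Theorem \ref{qconvexpos} in the case $q=1$, by modifying \emph{only} the Hermitian metric on $L$ so as to upgrade the semipositivity on $X_c$ to genuine positivity on $X_c\setminus K$. This is legitimate because $H^j(X,L^k\otimes E)$ is the cohomology of the coherent sheaf $\mathcal{O}_X(L^k\otimes E)$ and so is independent of the choice of metrics; hence I am free to replace $h^L$ by any smooth Hermitian metric that is convenient, apply Theorem \ref{qconvexpos}, and then read the resulting bound back for the original data.

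First I would invoke the structure of $1$-convex manifolds: $X$ carries a Remmert reduction $\pi\colon X\to Y$ onto a Stein space $Y$, a proper modification contracting the maximal compact analytic subset $A\subset X$ to a finite set. A preliminary observation is that $A\subseteq K$: every positive-dimensional compact analytic subset $V$ must lie in $K$, since otherwise the restriction to $V$ of the exhaustion $\varrho$, which is strictly plurisubharmonic on $X\setminus K$, would contradict the maximum principle for plurisubharmonic functions on a compact analytic set. Choosing a smooth strictly plurisubharmonic exhaustion $\varphi$ of the Stein space $Y$, I set $\psi:=\varphi\circ\pi$. Then $\psi$ is a smooth plurisubharmonic exhaustion of $X$ which is strictly plurisubharmonic exactly on $X\setminus A$; in particular $\sqrt{-1}\partial\overline\partial\psi\geq 0$ on all of $X$, while $\sqrt{-1}\partial\overline\partial\psi>0$ on $X\setminus A\supseteq X_c\setminus K$.

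With $\psi$ in hand I would put $\widetilde h^{L}:=h^{L}e^{-\psi}$, so that $\sqrt{-1}R^{(L,\widetilde h^{L})}=\sqrt{-1}R^{(L,h^{L})}+\sqrt{-1}\partial\overline\partial\psi$. On $X_c$ the first term is $\geq 0$ by semipositivity of $h^L$ and the second is $\geq 0$, so $(L,\widetilde h^L)$ is semipositive on $X_c$; on $X_c\setminus K$ the first term is $\geq 0$ and the second is $>0$, so $(L,\widetilde h^L)$ is positive there. Thus $(L,\widetilde h^L)$, with the \emph{same} exhaustion $\varrho$ and exceptional set $K$, fulfils every hypothesis of Theorem \ref{qconvexpos} for $q=1$, which yields $C>0$ with $\dim H^j(X,L^k\otimes E)\leq Ck^{n-j}$ for all $j\geq 1$ and $k\geq 1$; by metric independence this is exactly \eqref{e:sc} for $(L,h^L)$. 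The one genuinely non-elementary step is the construction of the global plurisubharmonic exhaustion $\psi$ that is strictly plurisubharmonic precisely off $A$, together with the inclusion $A\subseteq K$: the given $\varrho$ need not be plurisubharmonic on $K$, so a naive perturbation of $\varrho$ will not do, and it is the Remmert reduction that supplies the required global convexity while allowing $\sqrt{-1}\partial\overline\partial\psi$ to degenerate along the compact analytic set $A$. Once this is established, the fact that $A\subseteq K$ is what makes the extra positivity hypothesis of Theorem \ref{qconvexpos} on $X_c\setminus K$ automatic in the $1$-convex setting, explaining why no further assumption on $(L,h^L)$ is needed here.
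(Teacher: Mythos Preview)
Your argument is correct, but the paper takes a shorter route. Instead of invoking Theorem \ref{qconvexpos} (which needs genuine positivity of $(L,h^L)$ on $X_c\setminus K$), the paper simply applies Theorem \ref{qconvexpshthm} with $q=1$: for a $1$-convex manifold the given exhaustion $\varrho$ already has $\sqrt{-1}\partial\overline\partial\varrho>0$ on $X\setminus K$, so the extra hypothesis $\sqrt{-1}\partial\overline\partial\varrho\geq 0$ on $X_c\setminus K$ is automatic and Theorem \ref{qconvexpshthm} gives \eqref{e:sc} immediately. The subtlety you worried about---that $\varrho$ need not be plurisubharmonic on $K$, so twisting by $e^{-\varrho}$ could destroy semipositivity there---is handled inside the proof of Theorem \ref{qconvexpshthm} by twisting instead by $e^{-\chi(\varrho)}$ with $\chi$ chosen to vanish on a neighbourhood $X_{u_0}\supset K$ (Proposition \ref{pospos}); thus the metric is unchanged near $K$ and no global plurisubharmonic function on $X$ is needed.

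Your detour through the Remmert reduction does work and has its own interest: it produces a single smooth plurisubharmonic function $\psi$ on all of $X$, strictly plurisubharmonic off the maximal compact analytic set $A$, and then a single explicit metric $\widetilde h^L=h^Le^{-\psi}$ to which Theorem \ref{qconvexpos} applies. The cost is invoking a nontrivial structure theorem (existence and properties of the Remmert reduction, and the inclusion $A\subseteq K$), whereas the paper's approach uses only the definition of $1$-convexity and an elementary cutoff argument. Either way the metric change is harmless because $H^j(X,L^k\otimes E)$ is independent of Hermitian data, as you note.
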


Similarly, we also have the estimates of cohomology over pseudoconvex 
domains and weakly $1$-complete manifolds.

\begin{thm}\label{pseudoconvex}
	Let $M\Subset X$ be a smooth (weakly) pseudoconvex domain 
	in a complex manifold $X$ of dimension $n$. Let $(L,h^L)$ and 
	$(E,h^E)$ be holomorphic Hermitian line bundles on $X$.  
	Let $(L,h^L)$ be semipositive on $M$. Moreover, assume 
	$(L,h^L)$ is positive in a neighbourhood of $bM$.
	Then there exists $C>0$ such that for any $q\geq 1$ and sufficiently large $k$, we have
	\begin{equation}\label{est4}
		\dim H^{0,q}_{(2)}(X,L^k\otimes E)\leq Ck^{n-q}.
	\end{equation}
\end{thm}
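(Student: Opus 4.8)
The plan is to derive the theorem from Theorem \ref{L2FE} by checking, on the domain $M$, that the fundamental estimate holds in bidegree $(0,q)$ for every $q\geq 1$ and all large $k$, with an exceptional compact set $K\Subset M$ sitting in the interior, and then observing that $(L,h^L)$ is semipositive on a neighbourhood of $K$ (indeed on all of $M$). First I would fix a Hermitian metric $\omega$ on a neighbourhood of $\overline{M}$, giving the spaces $L^2_{0,\bullet}(M,L^k\otimes E)$, the maximal Dolbeault operator $\ddbar^E_k$ and its adjoint $\ddbar^{E,*}_k$; here $\Dom(\ddbar^{E,*}_k)$ encodes the free ($\ddbar$-Neumann) boundary condition along $bM$. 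By the Friedrichs density lemma (cf.\ \cite{MM}), forms that are smooth up to $bM$ and lie in $\Dom(\ddbar^E_k)\cap\Dom(\ddbar^{E,*}_k)$ are dense there in the graph norm, so it suffices to prove the fundamental estimate for such $s$.

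The main tool is the Bochner-Kodaira-Morrey-Kohn formula (cf.\ \cite{MM}): for such $s$, after absorbing the first-order torsion terms into the nonnegative gradient term, one obtains
\[
\|\ddbar^E_k s\|^2+\|\ddbar^{E,*}_k s\|^2\geq\int_M\lambda_{q,k}(x)\,|s|^2\,dv_M+\int_{bM}\mathcal{L}_\rho(s,s)\,dS-C_1\int_M|s|^2\,dv_M,
\]
where $\lambda_{q,k}(x)$ is the sum of the $q$ smallest eigenvalues of $k\,\imat R^{(L,h^L)}$ at $x$ with respect to $\omega$, $\mathcal{L}_\rho(s,s)$ is the boundary Levi-form term, and $C_1$ collects the fixed, $k$-independent contributions of $\imat R^{(E,h^E)}$ and of the torsion of $\omega$. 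Two sign facts drive the argument: since $M$ is weakly pseudoconvex, $\mathcal{L}_\rho(s,s)\geq 0$ on forms satisfying the boundary condition; and since $(L,h^L)$ is semipositive on $M$, one has $\lambda_{q,k}\geq 0$ everywhere, while in a neighbourhood of $bM$, where $(L,h^L)$ is positive, $\imat R^{(L,h^L)}\geq c\,\omega$ for some $c>0$, hence $\lambda_{q,k}\geq kcq$.

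Now I would take a neighbourhood $U$ of $bM$ in $M$ on which $\imat R^{(L,h^L)}\geq c\,\omega$, and set $K:=M\setminus U$, a compact subset of $M$. Discarding the nonnegative boundary term and using $\lambda_{q,k}\geq kcq$ on $U$ and $\lambda_{q,k}\geq 0$ on $K$, the displayed inequality gives
\[
kcq\int_U|s|^2\,dv_M\leq\|\ddbar^E_k s\|^2+\|\ddbar^{E,*}_k s\|^2+C_1\int_M|s|^2\,dv_M.
\]
Splitting $\int_M=\int_U+\int_K$ and moving $C_1\int_U|s|^2$ to the left yields $(kcq-C_1)\int_U|s|^2\leq\|\ddbar^E_k s\|^2+\|\ddbar^{E,*}_k s\|^2+C_1\int_K|s|^2$. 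For $k$ large enough $kcq-C_1>0$, so $\int_U|s|^2$ is controlled, and adding back $\int_K|s|^2$ produces the fundamental estimate
\[
\|s\|^2\leq C_0\Big(\|\ddbar^E_k s\|^2+\|\ddbar^{E,*}_k s\|^2+\int_K|s|^2\,dv_M\Big)
\]
in bidegree $(0,q)$ with exceptional compact set $K$, for all large $k$.

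Since $(L,h^L)$ is semipositive on all of $M$, it is semipositive on a neighbourhood of $K$; moreover the fundamental estimate, restricted to harmonic forms, yields the concentration condition. Thus the hypotheses of Theorem \ref{L2FE} are met, and its final assertion gives $\dim H^{0,q}_{(2)}(M,L^k\otimes E)\leq Ck^{n-q}$ for every $q\geq 1$ and large $k$. The hard part is the boundary analysis underlying the displayed formula: one must set up the $\ddbar$-Neumann framework so that the Morrey-Kohn boundary integral carries the sign dictated by pseudoconvexity, and verify that the torsion of the (possibly non-K\"ahler) metric $\omega$ together with the curvature of $E$ contribute only the uniformly $k$-independent term $C_1\int_M|s|^2$ once the first-order torsion terms are absorbed into the nonnegative gradient term by a large-constant/small-constant inequality; granting this, the $O(k)$ gain from the positivity of $L$ along the collar $U$ dominates for large $k$. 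The density of forms smooth up to the boundary in the graph-norm domain, needed to promote the a priori estimate to all of $\Dom(\ddbar^E_k)\cap\Dom(\ddbar^{E,*}_k)$, is the remaining technical point.
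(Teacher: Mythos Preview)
Your proposal is correct and follows essentially the same route as the paper: establish the fundamental estimate on $M$ for large $k$ via the Bochner--Kodaira--Nakano identity with boundary term (the Levi-form integral is nonnegative by weak pseudoconvexity, and the curvature term contributes $\geq kcq$ on a collar of $bM$ where $L>0$), then invoke Theorem~\ref{L2FE}. The paper's proof is extremely terse---it defers to \cite[Theorem~3.5.10]{MM} and to the scheme of Proposition~\ref{1coxFE} (which packages the collar estimate through a cut-off $\phi$ rather than by directly splitting the curvature integral as you do)---but the underlying mechanism and the appeal to Theorem~\ref{L2FE} are the same.
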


\begin{thm}\label{weakly1complete}
	Let $X$ be a weakly $1$-complete manifold of dimension $n$. 
	Let $(L,h^L)$ and $(E,h^E)$ be holomorphic Hermitian line bundles on $X$. 
	Let $(L,h^L)$ be semipositive on $X$. Moreover, assume $(L,h^L)$ 
	is positive on $X\setminus K$ for a compact subset $K\subset X$.
	Then there exists $C>0$ such that for any $q\geq 1$ and 
	sufficiently large $k$, we have
	\begin{equation}\label{est5}
		\dim H^q(X,L^k\otimes E)\leq Ck^{n-q}.
	\end{equation}
\end{thm}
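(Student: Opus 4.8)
The plan is to exhibit the intrinsic cohomology $H^q(X,L^k\otimes E)\cong H^{0,q}(X,L^k\otimes E)$ as an $L^2$-Dolbeault cohomology to which Theorem \ref{L2FE} applies, the role of the weak $1$-completeness being to supply a complete metric and to identify the two cohomologies. Fix a smooth plurisubharmonic exhaustion $\varphi\colon X\to\R$ and set $c_0:=\max_K\varphi<\infty$, so that $(L,h^L)$ is positive on the neighbourhood of infinity $\{\varphi>c_0\}\subset X\setminus K$. Choose a convex increasing function $\chi$, growing fast enough, and replace $h^L$ by the smooth metric $\til{h}^{L}:=h^L e^{-\chi(\varphi)}$; since $\varphi$ is plurisubharmonic, its curvature $\imat R^{(L,\til{h}^{L})}=\imat R^{(L,h^L)}+\imat\dbar\ddbar\chi(\varphi)$ is semipositive on all of $X$ and strictly positive on $\{\varphi>c_0\}$, and in particular it remains semipositive on a neighbourhood of $K$. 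As this curvature form is $d$-closed and positive at infinity, one can fix a \emph{complete} Hermitian metric $\omega$ on $X$ that agrees with $\imat R^{(L,\til{h}^{L})}$ on $\{\varphi>c_1\}$ for some $c_1>c_0$, where $\chi$ is chosen so that this form is complete in the $\varphi$-direction; all of these choices are independent of $k$.

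With this fixed geometric data I would establish the fundamental estimate in bidegree $(0,q)$, $q\geq1$, for $L^k\otimes E$, taking the exceptional compact set to be $\{\varphi\leq c_1\}\supset K$. By the Bochner--Kodaira--Nakano--H\"ormander inequality, for a $(0,q)$-form $s$ supported in $\{\varphi>c_1\}$ the curvature term acting on $s$ is bounded below by $k$ times the sum of the $q$ smallest eigenvalues of $\imat R^{(L,\til{h}^{L})}$ relative to $\omega$; by construction these eigenvalues all equal $1$ there, so the term is $\geq qk\,\|s\|^2$, while $\omega$ being K\"ahler at infinity kills the torsion and the curvature of $E$ is of lower order. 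Hence for large $k$ one gets $\|\ddbar^E_k s\|^2+\|\ddbar^{E,*}_k s\|^2\geq\|s\|^2$ on such $s$; completeness of $\omega$ makes compactly supported smooth forms a core for $\ddbar^E_k$ and $\ddbar^{E,*}_k$, and a cut-off splitting off the relatively compact region $\{\varphi\leq c_1\}$ upgrades this to the full fundamental estimate. Since $(L,\til{h}^{L})$ is semipositive near the exceptional set, Theorem \ref{L2FE} then yields a constant $C>0$, independent of $k$, with $\dim H^{0,q}_{(2)}(X,L^k\otimes E)\leq Ck^{n-q}$ for all large $k$.

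It remains to identify $H^q(X,L^k\otimes E)$ with this $L^2$-cohomology, and this is where I expect the principal difficulty. The fundamental estimate forces $\ddbar^E_k$ to have closed range in the relevant degrees, so the non-reduced and reduced $L^2$-cohomologies coincide and are represented by the finite-dimensional harmonic space; what is not formal is that the global $L^2$-complex computes the sheaf cohomology of the whole non-compact manifold. For this I would invoke the Ohsawa-type isomorphism theorem for weakly $1$-complete manifolds: because $\omega$ is complete and the weight $e^{-k\chi(\varphi)}$ on $L^k$ grows with the exhaustion, every smooth $\ddbar$-closed class has an $L^2$-representative and no spurious classes are created, giving $H^{0,q}_{(2)}(X,L^k\otimes E)\cong H^{0,q}(X,L^k\otimes E)=H^q(X,L^k\otimes E)$. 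The delicate points are to choose $\chi$ once and for all so that this isomorphism holds simultaneously for every large $k$ — here the fact that larger $k$ only strengthens the positivity at infinity is what keeps the constant $C$ uniform — and to verify that replacing $h^L$ by $\til{h}^{L}$ does not alter the intrinsic cohomology being estimated. Combining the isomorphism with the bound from Theorem \ref{L2FE} gives $\dim H^q(X,L^k\otimes E)\leq Ck^{n-q}$, as claimed.
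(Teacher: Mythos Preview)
Your approach is different from the paper's. The paper does not work globally on $X$ with a complete metric; instead it picks a regular value $c$ of the plurisubharmonic exhaustion $\varphi$ with $K\subset X_c$, so that $X_c$ is a smooth pseudoconvex domain on which $(L,h^L)$ is semipositive and strictly positive near $bX_c$, applies Theorem~\ref{pseudoconvex} to obtain $\dim H^{0,q}_{(2)}(X_c,L^k\otimes E)\leq Ck^{n-q}$, and then invokes Takegoshi's theorem \cite{Takegoshi:83} (giving $H^q(X,L^k\otimes E)\cong H^q(X_c,L^k\otimes E)$ for a semipositive bundle positive at infinity on a weakly $1$-complete manifold) together with the H\"ormander $L^2$/Dolbeault isomorphism on the bounded domain $X_c$. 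All hard analysis thus lives on a relatively compact set and the passage from $X_c$ to $X$ is delegated to a known theorem.

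Your route instead mimics the proof of Theorem~\ref{complete}: build a complete metric $\omega$ polarized by $(L,\til h^L)$ at infinity, derive the fundamental estimate on all of $X$, and apply Theorem~\ref{L2FE} directly. The curvature computation and the fundamental estimate are fine and parallel the argument for Theorem~\ref{complete}. The genuine gap is the step you yourself flag as the principal difficulty: the identification $H^{0,q}_{(2)}(X,L^k\otimes E)\cong H^q(X,L^k\otimes E)$. The ``Ohsawa-type isomorphism'' you invoke is not an off-the-shelf statement in this generality; in particular, surjectivity (that every smooth $\ddbar$-closed class on $X$ admits an $L^2$ representative for the single weight $e^{-k\chi(\varphi)}$ you fixed) does not follow formally, and there is a tension in your setup since $\omega$ itself depends on $\chi$, so you cannot freely enlarge $\chi$ afterwards to force integrability. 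The paper avoids this entirely by working on $X_c$, where the $L^2$/Dolbeault identification is standard, and using Takegoshi for the step back to $X$. If you insist on the global complete-metric approach, that identification is where the substantive work remains, and it is not clear it is any easier than the paper's route.
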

   
The next result is another generalization of \cite{BB:02} 
for line bundles endowed with a Hermitian metric with analytic singularities.
Let us recall that the analogue of the Kodaira vanishing theorem
 in the case of singular metrics is the Nadel vanishing theorem
 \cite{Dem:92, Nad:90}. If $X$ is a compact K\"ahler
 manifold, $L$ and $E$ are holomorphic vector bundles with $\rank(L)=1$,
 and $h^L$ is a singular Hermitian metric such that $c_1(L,h^L)$
 is a K\"ahler current, then $H^q(X,E\otimes L^k\otimes \mathscr{J}(h^{L^k}))=0$
 for $q\geq1$ and $k$ suffiently large, where $\mathscr{J}(h^{L^k})$
 is the Nadel multiplier ideal sheaf associated to $h^{L^k}$.
 Bonavero \cite{Bona:98} obtained holomorphic Morse inequalities
 for singular Hermitian line bundles. These inequalities imply 
 that for any $q\geq 1$ we have
	\begin{equation}\label{e:bon}
	\dim H^q(X, L^k\otimes E\otimes\mathscr{J}(h^{L^k}))=o(k^n),
	\quad k\to\infty,
	\end{equation}
if the curvature $c_1(L,h)$ is semipositive on the set of points
where it is smooth. We obtain the following refinement of 	\eqref{e:bon}.
  
\begin{thm}\label{singlinebundle}
	Let $X$ be a compact complex manifold of dimension $n$ and 
	let $L$ be a holomorphic line bundle on $X$ endowed with a 
	Hermitian metric $h^L$ with analytic singularities. 
	Let $(E,h^E)$ be a holomorphic Hermitian line bundle on $X$. 
	Assume $c_1(L,h^L)\geq 0$ on the set 
	$\{x\in X: h^L~\mbox{smooth on a neighborhood of}~x \}$. 
	
	Then there exists $C>0$ such that for any 
	$q\geq 1$ and  $k\geq 1$, we have
	\begin{equation}
	\dim H^q(X, L^k\otimes E\otimes\mathscr{J}(h^{L^k}))
	\leq C k^{n-q}.
	\end{equation}
	In particular, this estimate still holds when $c_1(L,h^L)$ 
	is a positive current on $X$.
\end{thm}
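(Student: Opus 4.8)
The plan is to remove the singularities of $h^L$ by a resolution, transfer the cohomology to the smooth compact manifold obtained, and there invoke the asymptotic estimate already proved in this paper. Since $h^L$ has analytic singularities, its local weight has the form $\varphi=c\log(\sum_j|g_j|^2)+O(1)$ with $c>0$ and $g_j$ holomorphic, and its unbounded locus $\Sigma$ is the common zero set of the $g_j$. By Hironaka's theorem I would choose a finite composition of blow-ups with smooth centres $\pi\colon\til X\to X$, biholomorphic over $X\setminus\Sigma$, principalizing the ideal sheaf generated by the $g_j$, so that $\pi^*\varphi=c\log\|s_F\|^2+\psi$ with $\psi$ smooth, $F=\sum_i r_iE_i$ a simple normal crossing divisor supported over $\Sigma$, and $s_F$ the canonical section of $\mathscr{O}_{\til X}(F)$. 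Writing the pulled-back curvature current as $\pi^*c_1(L,h^L)=c[F]+\beta$, the form $\beta$ is smooth on $\til X$, and since it equals $\pi^*c_1(L,h^L)\geq0$ on the dense open set $\til X\setminus\pi^{-1}(\Sigma)$, it is semipositive everywhere by continuity. This is the only place where the hypothesis $c_1(L,h^L)\geq0$ on the smooth locus is used, and it is exactly the semipositivity that will feed the asymptotic estimate.

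Next I would transfer the cohomology to $\til X$. Because $\pi^*\varphi$ has normal crossing singularities, the direct image formula for multiplier ideals gives $\mathscr{J}(h^{L^k})=\pi_*\mathscr{O}_{\til X}(K_{\til X/X}-D_k)$ with $D_k=\lfloor kcF\rfloor=\sum_i\lfloor kc r_i\rfloor E_i$, where $K_{\til X/X}$ is the relative canonical bundle, and the local vanishing theorem for multiplier ideal sheaves (see, e.g., \cite{Dem}) gives $R^j\pi_*\mathscr{O}_{\til X}(K_{\til X/X}-D_k)=0$ for $j\geq1$. Tensoring with the locally free sheaf $\pi^*(L^k\otimes E)$ and applying the projection formula, the Leray spectral sequence collapses to an isomorphism
\begin{equation*}
H^q(X,L^k\otimes E\otimes\mathscr{J}(h^{L^k}))\;\cong\;H^q\!\left(\til X,\;\pi^*L^k\otimes\mathscr{O}_{\til X}(-D_k)\otimes\mathscr{F}\right),\qquad \mathscr{F}:=\pi^*E\otimes K_{\til X/X},
\end{equation*}
where $\mathscr{F}$ is a fixed holomorphic line bundle on the compact manifold $\til X$.

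It then remains to bound the right-hand side. Subtracting the divisorial part, $N:=\pi^*L\otimes\mathscr{O}_{\til X}(-cF)$ carries a smooth metric of curvature $\beta\geq0$, hence is semipositive; since the fractional parts $\{kc r_i\}$ stay bounded, $\pi^*L^k\otimes\mathscr{O}_{\til X}(-D_k)\otimes\mathscr{F}$ admits a smooth metric whose curvature is $k\beta+\gamma_k$ with $\|\gamma_k\|$ bounded uniformly in $k$. When $c$ is rational with denominator $s$, one checks $D_{k+s}=D_k+scF$, so that $\pi^*L^{k+s}\otimes\mathscr{O}_{\til X}(-D_{k+s})=\bigl(\pi^*L^{k}\otimes\mathscr{O}_{\til X}(-D_{k})\bigr)\otimes M$ for the honest semipositive line bundle $M:=\pi^*L^{s}\otimes\mathscr{O}_{\til X}(-scF)$ (smooth metric of curvature $s\beta\geq0$); thus within each residue class $k\equiv j\ (\mathrm{mod}\ s)$ the bundle is $M^{\otimes m}\otimes(\text{fixed bundle})$. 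On the compact manifold $\til X$ the concentration condition holds trivially with exceptional set $K=\til X$, and $M$ is semipositive there, so Theorem~\ref{L2FE} (equivalently Berndtsson's compact estimate \cite{BB:02}) yields $\dim H^q(\til X,M^{\otimes m}\otimes(\cdots))\leq C_j m^{n-q}\leq C k^{n-q}$; taking the maximum over the finitely many residue classes $j$ gives the claim, and the positive-current case is simply the case where the hypothesis is automatic. For general real $c$ the same conclusion follows since the estimate depends only on the semipositivity of the leading term $k\beta$ and on the uniform bound for $\gamma_k$.

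The main obstacle I expect is the cohomological reduction: justifying both the exact form of the direct image $\pi_*\mathscr{O}_{\til X}(K_{\til X/X}-D_k)=\mathscr{J}(h^{L^k})$, with its $k$-dependent rounding $\lfloor kcF\rfloor$, and the vanishing $R^j\pi_*=0$ for $j\geq1$. This is precisely what makes the multiplier ideal sheaf disappear and converts a singular problem on $X$ into a smooth semipositive problem on $\til X$. A secondary technical point is the uniform control of the bounded twist $\gamma_k$, equivalently the periodicity argument for rational $c$, which is what guarantees that the exponent $k^{n-q}$, rather than only $o(k^{n})$, survives in the final estimate.
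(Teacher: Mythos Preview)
Your proposal is correct and follows essentially the same route as the paper. Both arguments resolve the analytic singularities by a log-resolution $\pi\colon\til X\to X$, observe that the smooth part $\beta$ (the paper's $\sqrt{-1}\dbar\ddbar\wi\psi$) of the pulled-back curvature is semipositive everywhere by continuity from the dense open set where it agrees with $\pi^*c_1(L,h^L)$, transfer the cohomology $H^q(X,L^k\otimes E\otimes\mathscr{J}(h^{L^k}))$ to cohomology of smooth line bundles on $\til X$, use the periodicity in $k$ coming from the rational exponent $c$ to reduce to finitely many fixed twisting bundles, and finally apply Berndtsson's compact estimate (Theorem~\ref{cmptthm}, a special case of Theorem~\ref{L2FE}) on $\til X$.

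The only difference is packaging: the paper quotes the cohomological reduction as black-box lemmas from \cite{MM} (Lemmas~\ref{blowup}, \ref{modimetric}, \ref{relacohomology}, which are Bonavero's argument), whereas you spell out the mechanism explicitly via the direct image formula $\mathscr{J}(h^{L^k})=\pi_*\mathscr{O}_{\til X}(K_{\til X/X}-\lfloor kcF\rfloor)$ and the local vanishing $R^j\pi_*=0$. Two small remarks: first, in this paper's Definition~\ref{defn_singmetric} the constant $c$ is assumed rational, so your final sentence about ``general real $c$'' is unnecessary here (and, as stated, would need a more careful argument than ``depends only on the leading term''). Second, your formal object $N=\pi^*L\otimes\mathscr{O}_{\til X}(-cF)$ is not a line bundle when $c\notin\Z$; this is harmless since your actual argument uses only the honest line bundle $M=\pi^*L^s\otimes\mathscr{O}_{\til X}(-scF)$, which is exactly the paper's $\widehat L$.
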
  

\begin{rem}
	Under the hypothesis of Theorem \ref{singlinebundle}, 
	if $c_1(L,h^L)\geq 0$ on the set 
	$\{x\in X: h^L~\mbox{smooth on a neighborhood of}~x \}$ 
	and positive at least at one point, then there exist $C_1>0$ and $C_2>0$ 
	such that for sufficiently large $k$, we have
	\begin{equation}
	C_1k^n\leq \dim H^{0}(X, L^k\otimes E\otimes \mathscr{J}(h^{L^k}))\leq C_2k^n.
	\end{equation}
	Thus $L$ is big and $X$ is Moishezon.
\end{rem}

 The last results are analytic singular versions of the
 vanishing theorem for partially positive line bundles 
 \cite[Ch.VII. (5.1) Theorem]{Dem}
 and Siu's vanishing theorem \cite[Theorem 1 (Page 175)]{Siu:85}
 	 (\cite[Appendix, Theorem 6 and Theorem 3 (ii)]{Rie:71} implies that, for a compact complex manifold $X$ and a holomorphic Hermitian line bundle $(L,h^L)$ on $X$, if $X$ is Moishezon and $c_1(L,h^L)\geq 0$ on $X$ and $c_1(L,h^L)>0$ at least at one point, then 
$ 	H^q(X,K_X\otimes L)=0$
 for all $q\geq 1$.	Furthermore, Siu's resolution \cite[Theorem 1 (Page 175)]{Siu:85} of Grauert-Riemenschneider conjecture
shows that the Moishezon assumption is unnecessary, and we called it Siu's vanishing theorem, see also \cite[Ch.VII. (3.5) Theorem]{Dem}). 

\begin{thm}\label{singAG62}
	Let $X$ be a compact manifold of dimension $n$, 
	let $L$ and $E$ be holomorphic vector bundles with $\rank(L)=1$. 
	Let $h^L$ be the Hermitian metric on $L$ with analytic singularities. 
	Assume $c_1(L,h^L)$ has at least $n-s+1$ positive eigenvalues on 
	the set $\{x\in X: h^L~\mbox{smooth on a neighborhood of}~x \}$.

 Then, for any $q\geq s$ and sufficiently large $k$, we have
	\begin{equation}
	H^q(X,E\otimes L^k\otimes \mathscr{J}(h^{L^k}))=0.
	\end{equation}
	Therefore, it follows that
	\begin{equation}\label{eqsum}
	\sum_{q=0}^{s-1}(-1)^q\dim H^q(X,E\otimes L^k\otimes \mathscr{J}(h^{L^k}))
	=\frac{k^n}{n!}\int_{X(\leq s-1)}c_1(L,h^L)^n+o(k^n).
	\end{equation}
\end{thm}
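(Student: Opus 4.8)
The plan is to reduce the cohomology with the Nadel multiplier ideal sheaf on $X$ to the cohomology of a genuine line bundle on a smooth modification, and then to invoke the vanishing theorem for partially positive line bundles in the smooth setting. Since $h^L$ has analytic singularities, I would first take a log resolution $\pi\colon\til{X}\to X$ (Hironaka) so that $\pi^*c_1(L,h^L)=\tilde\theta+[D]$, where $\tilde\theta$ is the smooth absolutely continuous part (the curvature of a smooth metric $\tilde h_0$ on $\pi^*L$), $D=\sum_j a_j D_j$ is an effective normal crossing divisor with $a_j>0$, and $K_{\til{X}/X}$ is the effective relative canonical divisor. The key point is that $\tilde\theta$ inherits at least $n-s+1$ positive eigenvalues on $\til{X}\setminus D$, since there $\pi$ is a local biholomorphism onto the smooth locus of $h^L$, where the hypothesis applies.

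Next I would use the direct-image description of multiplier ideals together with the local vanishing theorem (a consequence of Grauert--Riemenschneider \cite{GR:70}), namely
\begin{equation*}
\mathscr{J}(h^{L^k})=\pi_*\mathcal{O}_{\til{X}}\big(K_{\til{X}/X}-\lfloor kD\rfloor\big),\qquad R^i\pi_*\mathcal{O}_{\til{X}}\big(K_{\til{X}/X}-\lfloor kD\rfloor\big)=0\ (i\geq 1).
\end{equation*}
By the projection formula and the degeneration of the Leray spectral sequence this yields, for every $q$,
\begin{equation*}
H^q\big(X,E\otimes L^k\otimes\mathscr{J}(h^{L^k})\big)\cong H^q\big(\til{X},\,\pi^*E\otimes\pi^*L^k\otimes\mathcal{O}_{\til{X}}(K_{\til{X}/X}-\lfloor kD\rfloor)\big).
\end{equation*}
Writing the bundle on the right as $K_{\til{X}}\otimes\mathcal{F}_k$ with $\mathcal{F}_k=\pi^*(E\otimes K_X^{-1})\otimes\pi^*L^k\otimes\mathcal{O}_{\til{X}}(-\lfloor kD\rfloor)$ (using $K_{\til{X}/X}\otimes K_{\til{X}}^{-1}=\pi^*K_X^{-1}$), the vanishing $H^q=0$ for $q\geq s$ will follow from the partially positive vanishing theorem \cite[Ch.\,VII.(5.1)]{Dem} on the compact Hermitian manifold $\til{X}$, once $\mathcal{F}_k$ carries a smooth metric whose curvature has at least $n-s+1$ positive eigenvalues for all large $k$.

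The main obstacle is precisely this $s$-positivity of $\mathcal{F}_k$ along the exceptional divisor $D$. Away from $D$ the term $k\tilde\theta$ supplies $n-s+1$ positive eigenvalues of size $\sim k$, which dominate both the fixed curvature of the vector bundle $\pi^*(E\otimes K_X^{-1})$ and the non-K\"ahler torsion once $k$ is large; but since the condition ``at least $n-s+1$ positive eigenvalues'' is open and not closed, $\tilde\theta$ may degenerate on $D$. To repair this I would exploit the positive fractional current $k[D]-[\lfloor kD\rfloor]=\sum_j(ka_j-\lfloor ka_j\rfloor)[D_j]\geq 0$ coming from the floor in the multiplier ideal, together with Demailly's construction of a Hermitian metric adapted to the signature of the curvature (rescaling the eigendirections so that the at most $s-1$ non-positive eigenvalues contribute negligibly); combined with the large factor $k$ this restores $n-s+1$ positive eigenvalues transverse to $D$. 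This local analysis at $D$ is the delicate step on which the whole argument rests.

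Finally, the Morse-type equality \eqref{eqsum} I would deduce from the vanishing together with Bonavero's holomorphic Morse inequalities for singular metrics \cite{Bona:98}. The eigenvalue hypothesis forces $X(j)=\emptyset$ for $j\geq s$ on the smooth locus (at most $s-1$ negative eigenvalues there), so $X(\leq s)=X(\leq s-1)$ up to a set of measure zero. Writing the strong Morse inequalities at levels $q=s-1$ and $q=s$, and using the already established $H^s=0$ to simplify the alternating sum at level $s$ together with the identity of the integration domains, the two inequalities pinch to
\begin{equation*}
\sum_{q=0}^{s-1}(-1)^q\dim H^q\big(X,E\otimes L^k\otimes\mathscr{J}(h^{L^k})\big)=\frac{k^n}{n!}\int_{X(\leq s-1)}c_1(L,h^L)^n+o(k^n),
\end{equation*}
which is \eqref{eqsum}.
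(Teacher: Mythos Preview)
Your overall architecture matches the paper's: pass to a resolution $\wi\pi\colon\wi X\to X$, identify $H^q(X,E\otimes L^k\otimes\mathscr{J}(h^{L^k}))$ with a cohomology group on $\wi X$, and invoke the smooth partial-positivity vanishing \eqref{eqAG} there. The paper packages this through Lemmas~\ref{blowup}--\ref{relacohomology}, which produce a \emph{fixed} smooth Hermitian line bundle $(\widehat L,h^{\widehat L})$ on $\wi X$ with local curvature $m\sqrt{-1}\dbar\ddbar\wi\psi$ and reduce the question to the vanishing of $H^q(\wi X,\widehat L^{k'}\otimes(\wi E\otimes K_{\wi X}\otimes\wi{K_X^*})_{m'})$. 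Working with the single bundle $\widehat L$ rather than your $k$-dependent family $\mathcal F_k$ is what makes the eigenvalue analysis upstairs clean: one only has to check that one smooth $(1,1)$-form, namely $\sqrt{-1}\dbar\ddbar\wi\psi$, has at least $n-s+1$ positive eigenvalues everywhere on $\wi X$.

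The substantive gap is exactly the step you flag. Your proposed repair cannot work as written: the fractional divisor $kD-\lfloor kD\rfloor$ has coefficients in $[0,1)$ and contributes only $O(1)$ curvature, so it cannot restore a direction in which $k\wi\theta$ has collapsed; and Demailly's rescaled ambient metric suppresses the contribution of nonpositive eigendirections in the Bochner formula but does not manufacture additional positive ones. The paper's argument is entirely different and requires no correction along $D$: on the dense open set $\wi X\setminus(Z(g)\cup\wi B)$, where $g$ is nonvanishing and $\wi\pi$ is biholomorphic, one has $\sqrt{-1}\dbar\ddbar\wi\psi=\sqrt{-1}\dbar\ddbar(\varphi\circ\wi\pi)$, which inherits $\geq n-s+1$ positive eigenvalues from the hypothesis on $R(h^L)$; the paper then extends this across the nowhere-dense analytic set $Z(g)\cup\wi B$ by a continuity/contradiction argument on the smooth function $\wi\psi$. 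For \eqref{eqsum} your Morse-inequality pinching is legitimate, but the paper takes the shorter route: plug the vanishing for $q\geq s$ into the asymptotic Riemann--Roch--Hirzebruch formula \eqref{singRRH} and observe that the eigenvalue hypothesis forces $X(q)=\emptyset$ for $q\geq s$, so the integral over $R(h^L)$ is already supported on $X(\leq s-1)$.
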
  
   
Note that for a Hermitian metric with analytic singularities on line bundles over connected compact complex manifolds (see Definition \ref{defn_singmetric}), the non-negative rational number $c$ does not depend on the local form (\ref{singlocalweights}) of the weight $\varphi$. 
\begin{thm}\label{singSiu84}
	Let $X$ be a compact manifold of dimension $n$, let $L$ be a holomorphic line bundle. Let $h^L$ be the Hermitian metric on $L$ with analytic singularities. Assume $c_1(L,h^L)\geq 0$ on the set $\{x\in X: h^L~\mbox{smooth on a neighborhood of}~x \}$ and $c_1(L,h^L)>0$ at least at one point. 
	
	Then, for any $q\geq 1$, $k\geq 1$ and $m\in \N\setminus\{ 0 \}$ satisfying $mc\in \N$ with non-negative rational number $c$ in (\ref{singlocalweights}), we have 
	\begin{equation}
	H^q(X,K_X\otimes L^{km}\otimes \mathscr{J}(h^{L^{km}}))=0.
	\end{equation}
\end{thm}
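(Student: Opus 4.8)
The plan is to reduce the statement to the smooth Siu vanishing theorem recalled above by passing to a resolution of the singularities of $h^L$, and then to descend the resulting vanishing to $X$ by a Grauert--Riemenschneider type argument. Let $\varphi$ denote a local weight of $h^L$, with analytic singularities modelled on $\tfrac{c}{2}\log\sum_j|g_j|^2$ as in (\ref{singlocalweights}). First I would take a proper modification $\pi\colon\widetilde X\to X$, furnished by Hironaka's resolution, which is biholomorphic over the smooth locus of $h^L$ and for which the pullback of the ideal generated by the $g_j$ is principal with normal crossing support, say $\pi^{*}\mathcal{O}(\operatorname{div})=\mathcal{O}(-\sum_i b_i E_i)$ with $b_i\in\N$. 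Writing $K_{\widetilde X/X}=\sum_i k_i E_i$ for the relative canonical divisor, the multiplier ideal is computed on the resolution by $\mathscr{J}(h^{L^{km}})=\pi_{*}\mathcal{O}\big(\sum_i(k_i-\lfloor kmc\,b_i\rfloor)E_i\big)$ (cf.\ \cite{Dem}).

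The hypothesis $mc\in\N$ is used precisely here: since then $kmc\in\N$ and each $kmc\,b_i\in\Z$, no rounding occurs, and the singular part of the weight $km\,\pi^{*}\varphi$ has integral divisorial coefficients. I would therefore set $\widetilde L_{km}:=\pi^{*}L^{km}\otimes\mathcal{O}\big(-\sum_i kmc\,b_i E_i\big)$, a genuine holomorphic line bundle on $\widetilde X$. Equipping it with the smooth metric obtained by dividing $\pi^{*}(h^{L})^{km}$ by $\prod_i|s_{E_i}|^{2kmc\,b_i}$ cancels the divisorial singularity exactly and leaves a smooth curvature form $km\,\beta$. Over the dense open set $\widetilde X\setminus\bigcup_iE_i$, which maps isomorphically onto the smooth locus, $\beta=\pi^{*}c_1(L,h^L)\geq0$, so $\beta\geq0$ on all of $\widetilde X$ by continuity; and at the preimage of the point where $c_1(L,h^L)>0$ (which lies in the smooth locus, where $\pi$ is an isomorphism) we have $\beta>0$. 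Thus $\widetilde L_{km}$ is smooth, semipositive, and positive at one point, so Siu's vanishing theorem (\cite{Siu:85}, \cite{Dem}) applies to give $H^q(\widetilde X,K_{\widetilde X}\otimes\widetilde L_{km})=0$ for every $q\geq1$.

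It remains to transport this vanishing down to $X$. Using $K_{\widetilde X}=\pi^{*}K_X\otimes\mathcal{O}(K_{\widetilde X/X})$ and the projection formula, the direct image is
\begin{equation*}
\pi_{*}\big(K_{\widetilde X}\otimes\widetilde L_{km}\big)
=K_X\otimes L^{km}\otimes\pi_{*}\mathcal{O}\big(K_{\widetilde X/X}-\textstyle\sum_i kmc\,b_iE_i\big)
=K_X\otimes L^{km}\otimes\mathscr{J}(h^{L^{km}}),
\end{equation*}
by the resolution formula for the multiplier ideal and $\lfloor kmc\,b_i\rfloor=kmc\,b_i$. Since $\widetilde L_{km}$ carries a semipositive smooth metric it is nef, hence $\pi$-nef, and the local vanishing theorem for multiplier ideals (a Grauert--Riemenschneider type statement, cf.\ \cite{GR:70, Dem}) yields $R^{j}\pi_{*}\big(K_{\widetilde X}\otimes\widetilde L_{km}\big)=0$ for all $j\geq1$. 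The Leray spectral sequence then collapses into isomorphisms $H^{q}(\widetilde X,K_{\widetilde X}\otimes\widetilde L_{km})\cong H^{q}\big(X,K_X\otimes L^{km}\otimes\mathscr{J}(h^{L^{km}})\big)$, and combining with the vanishing on $\widetilde X$ finishes the proof.

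The main obstacle I anticipate is the descent step rather than the geometry on $\widetilde X$: one must verify carefully that the direct image is exactly $K_X\otimes L^{km}\otimes\mathscr{J}(h^{L^{km}})$ (matching the floor in the definition of the multiplier ideal with the integral coefficients produced by $kmc\in\N$) and that all higher direct images vanish, so that the Leray spectral sequence degenerates. This hinges on the relative vanishing theorem for a $\pi$-nef twist of $K_{\widetilde X}$, together with the independence of $c$ from the chosen local representation of $\varphi$ noted before the statement; once these are in place, the smooth Siu vanishing on $\widetilde X$ transfers verbatim.
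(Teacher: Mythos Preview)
Your proposal is correct and follows essentially the same strategy as the paper: pass to a resolution $\widetilde X\to X$ on which the pulled-back weight becomes $\frac{c}{2}\log|g|^2+\widetilde\psi$ with $\widetilde\psi$ smooth, observe that $\sqrt{-1}\partial\bar\partial\widetilde\psi\geq0$ everywhere by continuity from the dense smooth locus and $>0$ at one point, apply the smooth Siu vanishing upstairs, and then identify the cohomology on $\widetilde X$ with $H^q(X,K_X\otimes L^{km}\otimes\mathscr{J}(h^{L^{km}}))$. The only difference is in packaging the descent step: the paper quotes the ready-made identifications from \cite[(2.3.27), (2.3.32), (2.3.45)]{MM} (its Lemmas~\ref{modimetric} and \ref{relacohomology}), whereas you unpack this explicitly via the projection formula, local vanishing $R^{j}\pi_{*}(K_{\widetilde X}\otimes\widetilde L_{km})=0$, and the Leray spectral sequence---which is precisely what underlies those lemmas.
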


In this paper we consider the cohomology spaces on general 
(possibly non-compact) complex manifolds with semipositive line bundles. 
With the fundamental estimates fulfilled, Theorem \ref{L2FE} give an 
estimate of $L^2$-Dolbeaut cohomology on arbitrary complex manifolds. 
On one hand, it generalizes \cite{BB:02} to general complex manifolds 
in the context of $L^2$-cohomology; on the other hand, it leads to the refinement 
of the estimates for $q$-convex manifolds, pseudoconvex domains, 
weakly $1$-convex manifolds, complete manifolds, 
and semipositive line bundle endowed metric with analytic singularities, 
see Theorem \ref{qconvexpos}, \ref{pseudoconvex}, 
\ref{weakly1complete}, \ref{complete}, and \ref{singlinebundle}, respectively. 
Note also that the magnitude $k^{n-q}$ in Theorem \ref{L2FE}, Theorem \ref{qconvexpos}, 
Theorem \ref{weakly1complete}, Theorem \ref{complete}, 
Theorem \ref{singlinebundle} and Theorem \ref{coveringthm} 
cannot be improved in general, see \cite[Proposition 4.2]{BB:02}.

Our paper is organized in the following way. In Section \ref{pre_section} 
we introduce the notations and recall the necessary facts. 
In Section \ref{L2_section}, we give an asymptotic estimate for $L^2$-cohomology 
with semipositive line bundles on Hermitian manifolds, 
which is a uniform approach to consider semipositive line bundles 
on complex manifolds. As applications, we obtain the estimate of 
growth of dimension in certain possibly non-compact complex manifolds. 
In additional, we revisited the compact and covering manifolds in this context. 
In Section \ref{sectionsing}, we prove the estimate of cohomology still 
holds when the Hermitian metric of the line bundle has analytic singularities. 
Meanwhile, two vanishing theorems of line bundles are given in the setting of 
analytic singular Hermitian metric. The techniques and formulations are mainly 
based on Berndtsson \cite{BB:02}, Ma-Marinescu \cite{MM} and \cite{Wh:16}. 

\section{Preliminaries}\label{pre_section}

\subsection{$L^2$-cohomology}
Let $(X, \omega)$ be a Hermitian manifold of dimension $n$ and $(F, h^F)$ be a holomorphic Hermitian vector bundle over $X$. Let $\Omega^{p,q}(X, F)$ be the space of smooth $(p,q)$-forms on $X$ with values in $F$ for $p,q\in \N$. If the $\rank(F)=1$, the curvature of $(F, h^F)$ is defined by $R^F=\ddbar\dbar \log|s|^2_{h^{F}}$ for any local holomorphic frame $s$, then the Chern-Weil form of the first Chern class of $F$ is $c_1(F, h^F)=\frac{\sqrt{-1}}{2\pi}R^F$, which is a real $(1,1)$-form on $X$. The volume form is given by $dv_{X}:=\omega_n:=\frac{\omega^n}{n!} $. We use the notion of positive $(p,p)$-form given by \cite[Chapter III, \S 1, (1.1) (1.2) (1.5) (1.7)]{Dem}. If a $(p,p)$-form $T$ is positive, we write $T\geq 0$. 
\begin{defn}
We say a holomorphic Hermitian line bundle $(L,h^L)$ is semipositive on $X$, if $c_1(L,h^L)$ is positive semi-definite on $X$, equivalently $c_1(L,h^L)\geq 0$. For simplifying notations, we also denote  $L\geq 0$. 	
\end{defn} 

Let $\Omega^{p,q}_0(X, F)$ be the subspace of 
$\Omega^{p,q}(X, F)$ consisting of elements with compact support.
The $L^2$-scalar product on $\Omega^{p,q}_0(X, F)$ given by 
\begin{equation}\label{e:sp}
\langle s_1,s_2 \rangle=\int_X \langle s_1(x), s_2(x) \rangle_h dv_X(x)
\end{equation}
where $\langle\cdot,\cdot\rangle_h:=
\langle\cdot,\cdot\rangle_{h^F,\omega}$ 
is the pointwise Hermitian inner product induced by $\omega$ and $h^F$. 
We denote by $L^2_{p,q}(X, F)$, the $L^2$ completion of $\Omega^{p,q}_0(X, F)$.

Let $\ddbar^{F}: \Omega_0^{p,q} (X, F)\rightarrow L^2_{p,q+1}(X,F) $ be the Dolbeault operator and let  $ \ddbar^{F}_{\max} $ be its maximal extension (see \cite[Lemma 3.1.1]{MM}). From now on we still denote the maximal extension by $ \ddbar^{F} :=\ddbar^{F}_{\max} $ and  the associated Hilbert space adjoint by $\ddbar^{F*}:=\ddbar^{F*}_H:=(\ddbar^{F}_{\max})_H^*$ for simplifying the notations. Consider the complex of closed, densely defined operators
$L^2_{p,q-1}(X,F)\xrightarrow{\ddbar^{F}}L^2_{p,q}(X,F)\xrightarrow{\ddbar^{F}} L^2_{p,q+1}(X,F)$,
then $(\ddbar^{F})^2=0$. By \cite[Proposition 3.1.2]{MM}, the operator defined by
\begin{eqnarray}\label{eq40}\nonumber
\Dom(\square^{F})&=&\{s\in \Dom(\ddbar^{F})\cap \Dom(\ddbar^{F*}): 
\ddbar^{F}s\in \Dom(\ddbar^{F*}),~\ddbar^{F*}s\in \Dom(\ddbar^{F}) \}, \\ 
\square^{F}s&=&\ddbar^{F} \ddbar^{F*}s+\ddbar^{F*} \ddbar^{F}s \quad \mbox{for}~s\in \Dom(\square^{F}),
\end{eqnarray}
is a positive, self-adjoint extension of Kodaira Laplacian, called the Gaffney extension.

\begin{defn}
	The space of harmonic forms $\cH^{p,q}(X,F)$ is defined by 
	\begin{equation}\label{eq45}
	\cH^{p,q}(X,F):=\Ker(\square^{F})=\{s\in \Dom(\square^{F})\cap L^2_{p,q}(X, F): \square^{F}s=0 \}.
	\end{equation}
	The $q$-th reduced $L^2$-Dolbeault cohomology is defined by 
	\begin{equation}\label{eq46}
	\overline{H}^{0,q}_{(2)}(X,F):=\dfrac{\Ker(\ddbar^{F})\cap  L^2_{0,q}(X,F) }{[ \Im( \ddbar^{F}) \cap L^2_{0,q}(X,F)]},
	\end{equation}
	where $[V]$ denotes the closure of the space $V$. The $q$-th (non-reduced) $L^2$-Dolbeault cohomology is defined by 
	\begin{equation}
	H^{0,q}_{(2)}(X,F):=\dfrac{\Ker(\ddbar^{F})\cap  L^2_{0,q}(X,F) }{ \Im( \ddbar^{F}) \cap L^2_{0,q}(X,F)}.
	\end{equation}
	\end{defn}  
	
	According to the general regularity theorem of elliptic operators (cf.\ \cite[Theorem A.3.4]{MM}), 
	$s\in \cH^{p,q}(X,F) $ implies $s\in\Omega^{p,q}(X,F)$. By weak Hodge decomposition (cf.\ \cite[(3.1.21) (3.1.22)]{MM}), 
	we have a canonical isomorphism  
	\begin{equation}\label{eq47}
	\overline{H}^{0,q}_{(2)}(X,F)\cong \cH^{0,q}(X,F)
	\end{equation} for any $q\in \N$, which associates to each cohomology class its unique harmonic representative. The $q$-th cohomology of the sheaf of holomorphic sections of $F$ is isomorphic to the the $q$-th Dolbeault cohomology, $H^q(X,F)\cong H^{0,q}(X,F)$.
	
	For a given $0\leq q \leq n$, we say the fundamental estimate holds in bidegree $(0,q)$ for forms with values in $F$, if there exists a compact subset $K\subset X$ and $C>0$ such that 
	\begin{equation}
	\|s\|^2\leq C(\|\ddbar^F s\|^2+\|\ddbar^{F*}\|^2+\int_K|s|^2dv_X),
	\end{equation}
	for $s\in \Dom(\ddbar^F)\cap\Dom(\ddbar^{F,*})\cap L^2_{0,q}(X,F)$. $K$ is called the exceptional compact set of the estimate. If the fundamental estimate holds in bidegree $(0,q)$ for forms with values in $F$, the reduced and non-reduced $L^2$-Dolbeault cohomology coincide, see \cite[Theorem 3.1.8]{MM}.
	
	For a given $0\leq q\leq n$, we say that the concentration condition holds in bidegree $(0,q)$ for harmonic forms with values in $F$, if there exists a compact subset $K\subset X$ and $C>0$ such that 
	\begin{equation}
	\|s\|^2\leq C\int_K |s|^2 dv_X,
	\end{equation}
	for $s\in \Ker(\ddbar^F)\cap \Ker(\ddbar^{F*})\cap L^2_{0,q}(X,F)$.
	The compact set $K$ is called the exceptional compact set of the concentration. 
	Note if the fundamental estimate holds in bidegree $(0,q)$ 
	for forms with values in $F$, the concentration condition holds in bidegree $(0,q)$ 
	for harmonic forms with values in $F$.
	
	\subsection{$q$-convex complex manifolds and $\Gamma$-coverings}
	\begin{defn}[{\cite{AG:62}}]
		A complex manifold $X$ of dimension $n$ is called $q$-convex if there exists a smooth function $\varrho\in \cC^\infty(X,\R)$ such that the sublevel set $X_c=\{ \varrho<c\}\Subset X$ for all $c\in \R$ and the complex Hessian $\dbar\ddbar\varrho$ has $n-q+1$ positive eigenvalues outside a compact subset $K\subset X$. Here $X_c\Subset X$ means that the closure $\overline{X}_c$ is compact in $X$. We call $\varrho$ an exhaustion function and $K$ exceptional set. We say $X$ is $q$-complete if $K=\emptyset$ in additional.	
	\end{defn} 
	
	\begin{defn}  
		A complex manifold $X$ of dimension $n$ is called a 
		$q$-convex manifold with a plurisubharmonic exhaustion function near the exceptional set, 
		if there exists a compact subset $K\subset X$ and
		a smooth function $\varrho\in \cC^\infty(X,\R)$ such that the 
		sublevel set $X_c:=\{\varrho< c  \}\Subset X$ for all $c\in \R$, and $\sqrt{-1}\dbar\ddbar\varrho$ has at least $n-q+1$ positive eigenvalues 
		on $X\setminus K$ and $\sqrt{-1}\dbar\ddbar\varrho\geq 0$ on 
		$X_c\setminus K$ for some $X_c$ with $K\subset X_c$.	
	\end{defn}
	
Let $M$ be a relatively compact domain with smooth boundary $bM$ in a complex manifold $X$. Let $\rho\in \cC^\infty(X,\R)$ such that $M=\{ x\in X: \rho(x)<0 \}$ and $d\rho\neq 0$ on $bM=\{x\in X: \rho(x)=0\}$. We denote the closure of $M$ by $\overline{M}=M\cup bM$. We say that $\rho$ is a defining function of $M$. Let $T^{(1,0)}bM:=\{ v\in T_x^{(1,0)}X: \dbar\varrho(v)=0 \}$ be the analytic tangent bundle to $bM$ at $x\in bM$. The Levi form of $\rho$ is the $2$-form $\cL_\rho:=\dbar\ddbar\rho\in \cC^\infty(bM, T^{(1,0)*}bM\otimes T^{(0,1)*}bM)$.

\begin{defn}
	A relatively compact domain $M$ with smooth boundary $bM$ in a 
	complex manifold $X$ is called strongly (resp.\ (weakly)) pseudoconvex 
	if the Levi form $\cL_\rho$ is positive definite (resp.\ semidefinite).
\end{defn}

Note that any strongly pseudoconvex domain is $1$-convex.

\begin{defn}
	A complex manifold $X$ is called weakly $1$-complete if there exists a smooth plurisubharmonic function $\varphi\in \cC^\infty(X,\R)$ such that $\{x\in X: \varphi(x)<c\}\Subset X$ for any $c\in \R$. $\varphi$ is called an exhaustion function.
\end{defn}
Note that any $1$-convex manifold is weakly $1$-complete.

	\begin{defn}
		A Hermitian manifold $(X,\omega)$ is called complete, if all geodesics are defined for all time for the underlying Riemannian manifold. 
	\end{defn}

If $(X,\omega)$ is complete, for arbitrary holomorphic Hermitian vector bundle $(F,h^F)$ on $X$, $\Omega_0^{0,\bullet}(X,F)$ is dense in $\Dom(\ddbar^F)$, $\Dom(\ddbar^{F*}_H)$ and $\Dom(\ddbar^F)\cap \Dom(\ddbar^{F*}_H)$ in the graph-norms of $\ddbar^F$, $\ddbar^{F*}_H$ and $\ddbar^E+\ddbar^{E*}_H$ respectively, see \cite[Lemma 3.3.1 (Andreotti-Vesentini), Corollary 3.3.3]{MM}. Here the graph-norm is defined by $\|s\|+\|Rs\|$ for $s\in \Dom(R)$. 
	
	\begin{defn}
		Let $(X,\omega)$ be a Hermitian manifold of dimension $n$ on which a discrete 
		group $\Gamma$ acts holomorphically, freely and properly such that $\omega$ is a $\Gamma$-invariant Hermitian 
		metric and the quotient $X/\Gamma$ is compact. We say $X$ is a $\Gamma$-covering manifold.
	\end{defn}	
 
\subsection{Kodaira Laplacian with $\ddbar$-Neumann boundary conditions}
Let $(X,\omega)$ be a Hermitian manifold of dimension $n$ and $(F,h^F)$ be a holomorphic Hermitian vector bundles over $X$. Let $M$ be a relatively compact domain in $X$. Let $\rho$ be a defining function of $M$ satisfying $M=\{ x\in X: \rho(x)<0 \}$ and $|d\rho|=1$ on $bM$, where the pointwise norm $|\cdot|$ is given by $g^{TX}$ associated to $\omega$.

Let $e_{\bn} \in TX$ be the inward pointing unit normal at $bM$ and $e_{\bn}^{(0,1)}$ its projection on $T^{(0,1)}X$. In a local orthonormal frame $\{ w_1,\cdots,\omega_n \}$ of $T^{(1,0)}X$, we have $e_{\bn}^{(0,1)}=-\sum_{j=1}^n w_j(\rho)\ov w_j$. Let $B^{0,q}(X,F):=\{ s\in \Omega^{0,q}(\ov M, F): i_{e_{\bn}^{(0,1)}}  s=0 ~\mbox{on}~bM \}$. We have $B^{0,q}(M,F)=\Dom(\ddbar_H^{F*})\cap \Omega^{0,q}(\overline{M},F)$ and the Hilbert space adjoint $\ddbar_H^{F*}$ of $\ddbar^F$ coincides with the formal adjoint $\ddbar^{F*}$ of $\ddbar^F$ on $B^{0,q}(M,F)$, see \cite[Proposition 1.4.19]{MM}. We consider the operator $\square_N s=\ddbar^{F}\ddbar^{F*}s+\ddbar^{F*}\ddbar^{F}s$ for $s\in \Dom(\square_N):=\{s\in B^{0,q}(M,F): \ddbar^Fs\in B^{0,q+1}(M,F) \}$. The Friedrichs extension of $\square_N$ is a self-adjoint operator and is called the Kodaira Laplacian with $\ddbar$-Neumann boundary conditions, which coincides with the Gaffney extension of the Kodaira Laplacian, see \cite[Proposition 3.5.2]{MM}. Note $\Omega^{0,\bullet}(\overline{M},F)$ is dense in $\Dom(\ddbar^F)$ in the graph-norms of $\ddbar^F$, and $B^{0,\bullet}(M,F)$ is dense in $\Dom(\ddbar^{F*}_H)$ and in $\Dom(\ddbar^F)\cap \Dom(\ddbar^{F*}_H)$ in the graph-norms of $\ddbar^{F*}_H$ and $\ddbar^E+\ddbar^{E*}_H$, respectively, see \cite[Lemma 3.5.1]{MM}. Here the graph-norm is defined by $\|s\|+\|Rs\|$ for $s\in \Dom(R)$. 

\subsection{Hermitian metric with analytic singularities on line bundles} 
\begin{defn} \label{defn_singmetric}
	Let $X$ be a connected compact complex manifold of dimension $n$ and $L$ a holomorphic line bundle on $X$. On $L$ we say $h^L$ is a Hermitian metric with analytic singularities, if there exists a smooth hermitian metric $h^L_0$ and a function $\varphi\in L^1_{loc}(X,\R)$ with locally
	\begin{equation}\label{singlocalweights}
	\varphi=\frac{c}{2}\log(\sum_{j\in J}|f_j|^2)+\psi,
	\end{equation}
	where $J$ is at most countable, $c$ is a non-negative rational number, $f_j$ are non-zero holomorphic functions and $\psi$ is a smooth function, such that $h^L=h^L_0e^{-2\varphi}$. 
\end{defn} 
 
For any local holomorphic frame $e_L$ of $L$, $h_0^L(e_L,e_L)=e^{-2\psi_0}$, where $\psi_0$ is smooth, thus the local weights of $h^L$ is given by
$\frac{c}{2}\log(\sum_J|f_j|^2)+(\psi+\psi_0).$ Because locally $\psi+\psi_0$ is smooth, we use (\ref{singlocalweights}) to represent the local weight of $h^L$ for simplifying notations.

For $\varphi\in L^1_{loc}(X,\R)$, the Nadel multiplier ideal sheaf $\mathscr{J}(\varphi)$ is the ideal subsheaf of germs of holomorphic functions $f\in \cO_{X,x}$ such that $|f|^2e^{-2\varphi}$ is integrable with respect to the Lebesgue measure in local coordinates near $x$. We define $\mathscr{J}(h^L):=\mathscr{J}(\varphi)$, which does not depend on the choice of $\varphi$, see \cite{Dem:92} or \cite[Definition 2.3.13]{MM}. 

\section{Asymptotic estimate for $L^2$-cohomology with semipositive line bundles}\label{L2_section}
Let $(X,\omega)$ be a Hermitian manifold of dimension $n$ and let
$(L,h^L)$ and $(E,h^E)$ be holomorphic Hermitian line bundles over $X$. Let $\cH^{0,q}(X,L^k\otimes E)$ be the space of harmonic $(0,q)$-forms with values in $L^k\otimes E$. Let $\{s^k_j\}_{j\geq1}$ be an orthonormal basis and denote by $B_k^q$
the Bergman density function defined by
\begin{equation}\label{e:Bergfcn}
B_k^q(x)=\sum_{j\geq 1}|s^k_j(x)|_{h_k,\omega}^2\,,
\;x\in X,
\end{equation}
where $|\cdot|_{h_k,\omega}$ is the pointwise norm of a form, see \cite {Wh:16}. The function \eqref{e:Bergfcn} 
is well-defined by an adaptation of 
\cite[Lemma 3.1]{CM} to form case.  
 By replacing $E\bigotimes\Lambda^n (T^{(1,0)}X)$ for $E$ in $\cH^{n,q}(X,L^k\otimes E)$, we can rephrase \cite[Theorem 1.1]{Wh:16}  as follows.

\begin{thm}\label{Localmain}
	Let $(X,\omega)$ be a Hermitian manifold of dimension $n$ and let
	$(L,h^L)$ and $(E,h^E)$ be holomorphic Hermitian line bundles over $X$. 
	Let $K\subset X$ be a compact subset and assume that $(L,h^L)$ is semipositive on a neighborhood
	of $K$.
	
	Then there exists  $C>0$ depending on the compact set $K$, the metric $\omega$ 
	and the bundles $(L,h^L)$ and $(E,h^E)$, such that for any $x\in K$, $k\geq 1$ and $q\geq 1$,
	\begin{equation}
	B^q_k(x) \leq C k^{n-q}\,,
	\end{equation}
	where $B^q_k(x)$ is the Bergman density function \eqref{e:Bergfcn} of harmonic 
	$(0,q)$-forms with values in $L^k\otimes E$. 
\end{thm}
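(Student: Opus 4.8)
The plan is to reduce the statement to the density bound for harmonic $(n,q)$-forms established in \cite[Theorem 1.1]{Wh:16}, by means of the canonical identification of the relevant form bundles. Set $E':=E\otimes\Lambda^n(T^{(1,0)}X)$. Using $\Lambda^{n,q}(T^*X)=K_X\otimes\Lambda^{0,q}(T^*X)$ together with $K_X\otimes\Lambda^n(T^{(1,0)}X)\cong\cO_X$, one obtains a canonical pointwise isomorphism
\[
\Lambda^{0,q}(T^*X)\otimes(L^k\otimes E)\;\cong\;\Lambda^{n,q}(T^*X)\otimes(L^k\otimes E').
\]
First I would equip $\Lambda^n(T^{(1,0)}X)$, equivalently $K_X$, with the Hermitian metric induced by $\omega$, so that this identification is a \emph{pointwise isometry} of Hermitian bundles.

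Next I would transport the entire $L^2$ Dolbeault apparatus across this isometry. Since the $\ddbar$-complex is intrinsic and the holomorphic structure of $K_X$ is canonical, the isomorphism carries the Dolbeault operator on $(0,q)$-forms with values in $L^k\otimes E$ to that on $(n,q)$-forms with values in $L^k\otimes E'$; being a pointwise, hence $L^2$, isometry, it intertwines the corresponding Hilbert space adjoints, the Gaffney extensions of the Kodaira Laplacians, and therefore the harmonic spaces. It thus restricts to an isometric isomorphism $\cH^{0,q}(X,L^k\otimes E)\cong\cH^{n,q}(X,L^k\otimes E')$, which sends an orthonormal basis to an orthonormal basis and preserves pointwise norms. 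Consequently the two Bergman density functions coincide at every point of $X$.

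It then remains to quote \cite[Theorem 1.1]{Wh:16}. The hypothesis that $(L,h^L)$ is semipositive on a neighbourhood of $K$ concerns $(L,h^L)$ alone and is untouched by the twist $E\rightsquigarrow E'$; applying \cite[Theorem 1.1]{Wh:16} to the data $(X,\omega,L,h^L,E',h^{E'})$ produces $C>0$, depending only on $K$, $\omega$ and the bundles, with the harmonic $(n,q)$-density bounded by $Ck^{n-q}$ on $K$ for all $k\geq1$ and $q\geq1$. By the equality of densities established above, $B^q_k(x)\leq Ck^{n-q}$ for $x\in K$.

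I expect the only delicate point — and the step I would treat most carefully — to be the matching of the \emph{maximal} (Gaffney) realizations: one must confirm that $\Dom(\ddbar^E_k)$ and $\Dom(\ddbar^{E*}_k)$ correspond exactly to their $(n,q)$ counterparts under the identification. This is automatic once one observes that a pointwise isometry commuting with the intrinsic $\ddbar$ preserves graph norms, and hence maximal domains together with their Hilbert space adjoints. The genuine analytic content behind the exponent $k^{n-q}$ — the rescaling and Bochner--Kodaira estimates for harmonic forms exploiting $c_1(L,h^L)\geq0$ — is entirely contained in \cite{Wh:16} and need not be reproduced here.
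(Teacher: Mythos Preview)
Your proposal is correct and follows exactly the approach indicated in the paper: the paper states that Theorem~\ref{Localmain} is obtained from \cite[Theorem~1.1]{Wh:16} ``by replacing $E\otimes\Lambda^n(T^{(1,0)}X)$ for $E$ in $\cH^{n,q}(X,L^k\otimes E)$'', which is precisely your twist $E'=E\otimes\Lambda^n(T^{(1,0)}X)$ together with the canonical isometry $\Lambda^{0,q}\otimes L^k\otimes E\cong\Lambda^{n,q}\otimes L^k\otimes E'$. You have in fact written out more of the justification (the matching of maximal domains and hence of harmonic spaces and Bergman densities) than the paper itself, which simply asserts the rephrasing in one sentence.
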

 
A general result on asymptotic estimate for $L^2$-cohomology with semipositive line bundles over Hermitian manifolds follows immediately.

\begin{thm}[Theorem \ref{L2FE}]
	Let $(X,\omega)$ be a Hermitian manifold of dimension $n$. Let $(L,h^L)$ and $(E,h^E)$ be holomorphic Hermitian line bundles on $X$. Let $1\leq q \leq n$. Assume the concentration condition holds in bidegree $(0,q)$ for harmonic forms with values in $L^k\otimes E$ for large $k$.
	Moreover, assume
	$(L,h^L)$ is semipositive on a neighbourhood of the exceptional set $K$. 
	Then there exists $C>0$ such that for sufficiently large $k$ we have
	\begin{eqnarray}
	\dim \cH^{0,q}(X,L^k\otimes E)&\leq& Ck^{n-q}.
	\end{eqnarray}
	The same estimate also holds for reduced $L^2$-Dolbeault cohomology groups,
	\begin{equation}
	\dim \overline{H}^{0,q}_{(2)}(X,L^k\otimes E)\leq Ck^{n-q}.
	\end{equation}
	In particular, if the fundamental estimate holds in bidegree $(0,q)$ for forms with values in $L^k\otimes E$ for large $k$, the same estimate holds for $L^2$-Dolbeault cohomology groups
	\begin{equation}
	\dim H^{0,q}_{(2)}(X,L^k\otimes E)\leq Ck^{n-q}.
	\end{equation}
\end{thm}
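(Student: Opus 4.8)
The plan is to deduce everything from the pointwise Bergman density bound of Theorem \ref{Localmain} by a standard trace (dimension-counting) argument, and then to transport the estimate from harmonic forms to the reduced and non-reduced cohomology groups via the Hodge-type isomorphism \eqref{eq47}. The genuine analytic content sits in Theorem \ref{Localmain}; once that is available, the remaining steps are essentially bookkeeping, the only subtle point being the basis-independent domination of partial sums by the Bergman density.

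First I would fix $K$ to be the exceptional compact set of the concentration condition, with associated constant $C_0$. Since by hypothesis $(L,h^L)$ is semipositive on a neighbourhood of $K$, Theorem \ref{Localmain} applies with this same $K$ and furnishes a constant $C>0$ with $B^q_k(x)\le Ck^{n-q}$ for all $x\in K$ and all large $k$. To bound the dimension I would then take an \emph{arbitrary} finite orthonormal system $\{s^k_1,\dots,s^k_N\}\subset \cH^{0,q}(X,L^k\otimes E)$. Each $s^k_j$ is harmonic, hence lies in $\Ker(\ddbar^E_k)\cap\Ker(\ddbar^{E*}_k)\cap L^2_{0,q}(X,L^k\otimes E)$, so the concentration inequality $\|s^k_j\|^2\le C_0\int_K|s^k_j|^2\,dv_X$ applies to it. Summing over $j$, using $\|s^k_j\|^2=1$ together with the pointwise domination $\sum_{j=1}^N|s^k_j(x)|^2\le B^q_k(x)$, I obtain
\begin{equation*}
N=\sum_{j=1}^N\|s^k_j\|^2\le C_0\int_K\sum_{j=1}^N|s^k_j|^2\,dv_X\le C_0\int_K B^q_k\,dv_X\le C_0\,C\,\vol(K)\,k^{n-q}.
\end{equation*}
As the right-hand side is independent of $N$, taking the supremum over all such finite systems yields $\dim \cH^{0,q}(X,L^k\otimes E)\le C_0C\vol(K)\,k^{n-q}$, which simultaneously establishes finiteness of the dimension and proves \eqref{est1}. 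The estimate \eqref{est2} for the reduced $L^2$-Dolbeault cohomology is then immediate from the canonical isomorphism $\overline{H}^{0,q}_{(2)}(X,L^k\otimes E)\cong \cH^{0,q}(X,L^k\otimes E)$ of \eqref{eq47}.

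For the final assertion I would invoke the implication recorded in the Preliminaries: if the fundamental estimate holds in bidegree $(0,q)$ for large $k$, then the concentration condition holds in the same bidegree, so the first part of the argument applies verbatim; moreover, under the fundamental estimate the reduced and non-reduced $L^2$-Dolbeault cohomology coincide by \cite[Theorem 3.1.8]{MM}. Combining these, \eqref{est2} upgrades to $\dim H^{0,q}_{(2)}(X,L^k\otimes E)\le Ck^{n-q}$, completing the proof.

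The step requiring the most care is the inequality $\sum_{j=1}^N|s^k_j(x)|^2\le B^q_k(x)$ and the attendant well-definedness of $B^q_k$: both follow from the fact that the Bergman density function is independent of the chosen orthonormal basis (complete the finite system to a full orthonormal basis and use nonnegativity of all terms), which is the adaptation of \cite[Lemma 3.1]{CM} to the form case cited after \eqref{e:Bergfcn}. Everything else is routine, so the main obstacle is entirely absorbed into Theorem \ref{Localmain}, which may be assumed.
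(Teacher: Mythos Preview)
Your argument is correct and follows essentially the same route as the paper: apply the pointwise Bergman density bound of Theorem \ref{Localmain} on $K$, sum the concentration inequality over an orthonormal system to obtain $\dim\cH^{0,q}\le C_0\int_K B^q_k\,dv_X\le C_0C\vol(K)k^{n-q}$, then invoke \eqref{eq47} and \cite[Theorem 3.1.8]{MM} for the reduced and non-reduced cohomology statements. Your use of finite orthonormal systems to avoid assuming finite-dimensionality a priori is a nice extra bit of care, but otherwise the two proofs are the same.
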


\begin{proof}
By Theorem \ref{Localmain} and the concentration condition, we have
\begin{eqnarray}
\dim \ov H^{(0,q)}_{(2)}
&=&\dim \cH^{0,q}(X,L^k\otimes E)\\
&=&\sum_{j\geq 1} \|s^k_j\|^2\leq C_0\int_K B^q_k(x)dv_X\leq C_0C\vol(K)k^{n-q}
\end{eqnarray}
for sufficiently large $k$. Note that $H^{0,q}_{(2)}(X,F)= \ov H^{0,q}_{(2)}(X,F)$ and the dimension is finite, when the fundamental estimate holds in bidegree $(0,q)$ for forms with values in a holomorphic Hermitian vector bundle $(F,h^F)$ by \cite[Theorem 3.1.8]{MM}. 
\end{proof}
      
\subsection{$q$-convex manifolds}
\subsubsection{Exhaustion functions with the plurisubharmonic near the exceptional set}

In this section we prove the following
general result about the growth of the
cohomology of $q$-convex manifolds.  
\begin{thm}\label{qconvexpshthm}
	Let $X$ be a $q$-convex manifold of dimension $n$, 
	and let $(L,h^L), (E,h^E)$ be holomorphic Hermitian line bundles on $X$. 
	Let $\varrho$ be an exhaustion function of $X$ and $K$ the 
	exceptional set. Let $(L,h^L)$ be semipositive on a sublevel set 
	$X_c:=\{x\in X: \varrho(x)<c \}$ satisfying 
	$K\subset X_c$, and let $\sqrt{-1}\dbar\ddbar\varrho\geq 0$ on $X_c\setminus K$.
	Then there exists $C>0$ such that for any $j\geq q$ and  $k\geq 1$, we have
	\begin{equation}
	\dim H^j(X,L^k\otimes E)\leq Ck^{n-j}.
	\end{equation}
\end{thm}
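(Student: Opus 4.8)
The plan is to deduce the estimate from Theorem~\ref{L2FE} by realizing the sheaf cohomology $H^j(X,L^k\otimes E)$ in degrees $j\geq q$ as a space of $L^2$-harmonic forms on the sublevel set $X_c$, where the hypotheses furnish the positivity we need. First I would invoke the theory of $q$-convex manifolds (\cite{AG:62}, \cite[Ch.~3.5]{MM}): since $K\subset X_c$ and the complex Hessian $\dbar\ddbar\varrho$ has at least $n-q+1$ positive eigenvalues on $X\setminus K$, the restriction maps in cohomology are isomorphisms in degrees $j\geq q$ for sublevel sets above the exceptional level, so that $H^j(X,L^k\otimes E)\cong H^j(X_c,L^k\otimes E)\cong H^{0,j}(X_c,L^k\otimes E)$, and the latter is represented by $L^2$-harmonic $(0,j)$-forms for a suitably completed metric on $X_c$ (equivalently by the $\dbar$-Neumann harmonic forms on $\overline{X}_c$, whose Laplacian is the Gaffney extension, cf.\ the Preliminaries). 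This confines all the analysis to $X_c$, which is essential: outside $X_c$ we control neither the sign of $\dbar\ddbar\varrho$ nor the sign of the curvature of $L$, so no pointwise curvature positivity is available there.

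The analytic core is to verify the \emph{fundamental estimate} in bidegree $(0,j)$ for $j\geq q$, uniformly for large $k$, with exceptional set $K$. I would introduce a weight $e^{-\chi(\varrho)}$ with $\chi$ convex increasing, adding the curvature term $\chi'(\varrho)\,\imat\dbar\ddbar\varrho+\chi''(\varrho)\,\imat\dbar\varrho\wedge\ddbar\varrho$, and pass to $(n,j)$-forms by twisting $E$ with $\Lambda^n(T^{(1,0)}X)$ as in the reformulation preceding Theorem~\ref{Localmain}, so that the Bochner--Kodaira--Nakano curvature operator acts on $(n,j)$-forms with eigenvalues given by sums of $j$ curvature eigenvalues. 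On $X_c\setminus K$ the form $\imat\dbar\ddbar\varrho$ is positive semidefinite with at least $n-q+1$ positive eigenvalues; hence for $j\geq q$ every sum of its $j$ smallest eigenvalues contains at least $j-q+1\geq 1$ strictly positive terms and no negative term. Since $k\,\imat R^{L}\geq 0$ on $X_c$ only raises all eigenvalues and the curvature of $E\otimes\Lambda^n(T^{(1,0)}X)$ is bounded on the compact $\overline{X}_c$, a large but $k$-independent choice of $\chi'$ makes the curvature operator strictly positive on $X_c\setminus K$, giving the coercive estimate there; the contribution of $K$ and of the boundary (where $\varrho$ is plurisubharmonic, so the Levi form is favorable for $j\geq q$) is absorbed into the exceptional term. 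This is precisely the fundamental estimate in bidegree $(0,j)$, with constant uniform in $k$.

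With the fundamental estimate in hand, the hypotheses of Theorem~\ref{L2FE} (applied with $X_c$ in place of $X$) are met: the fundamental estimate implies the concentration condition in bidegree $(0,j)$ for large $k$, and $(L,h^L)$ is semipositive on $X_c$, hence on a neighborhood of $K$. Theorem~\ref{L2FE} then yields $\dim\cH^{0,j}(X_c,L^k\otimes E)\leq Ck^{n-j}$ for large $k$; combining with the identifications above gives $\dim H^j(X,L^k\otimes E)\leq Ck^{n-j}$ for $j\geq q$ and large $k$. Finally, since $H^j(X,L^k\otimes E)$ is finite-dimensional for each $k$ by the Andreotti--Grauert finiteness theorem, the finitely many remaining small values of $k$ are absorbed by enlarging $C$ (using $k^{n-j}\geq 1$ for $q\leq j\leq n$, $k\geq 1$, and the vanishing of $H^j$ for $j>n$), which yields the estimate for all $k\geq 1$.

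The step I expect to be the main obstacle is making the cohomological reduction fully rigorous and compatible with the quantitative analysis: one must choose the completed metric and the weight on $X_c$ so that the $L^2$-cohomology faithfully computes the sheaf cohomology $H^j(X,L^k\otimes E)$ in degrees $j\geq q$, while keeping the fundamental-estimate constant independent of $k$. The tension is that a strong weight is convenient both for identifying the correct cohomology and for completeness near $bX_c$, but the curvature bookkeeping must remain uniform in $k$; reconciling these two requirements — and treating the boundary term at $bX_c$, which is benign since $\varrho$ is plurisubharmonic there — is the delicate part of the argument.
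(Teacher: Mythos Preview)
Your proposal is correct and follows essentially the same route as the paper: reduce to $L^2$-cohomology on the sublevel set $X_c$ via the Andreotti--Grauert and H\"ormander isomorphisms, establish the fundamental estimate in bidegrees $(0,j)$, $j\geq q$, by introducing a weight $e^{-\chi(\varrho)}$ and applying Bochner--Kodaira--Nakano (using the plurisubharmonicity of $\varrho$ on $X_c\setminus K$ precisely to guarantee that the weight-curvature term $\chi'(\varrho)\,\imat\dbar\ddbar\varrho$ has no negative eigenvalues there), then invoke Theorem~\ref{L2FE} and absorb small $k$ via Andreotti--Grauert finiteness. The only cosmetic difference is that the paper attaches the weight to $L$ (so it scales as $e^{-k\chi(\varrho)}$ and one must check, as in Proposition~\ref{pospos}, that the modified metric $h^L_\chi$ stays semipositive on $X_c$), whereas you use a $k$-independent weight and leave $(L,h^L)$ untouched; both variants produce the fundamental estimate with exceptional compact set inside $X_c$ and feed identically into Theorem~\ref{L2FE}.
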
 

Let $X$ be a $q$-convex manifold of dimension $n$, let $\varrho$ be a plurisubharmonic exhaustion function of $X$ and $K$ the exceptional set. By the definition, $\varrho\in \cC^\infty(X,\R)$ satisfies  $X_c:=\{ \varrho<c\}\Subset X$ for all $c\in \R$,  $\sqrt{-1}\dbar\ddbar\varrho$ has $n-q+1$ positive eigenvalues on $X\setminus K$. In this section, we fix real numbers $u_0, u$ and $v$ satisfying $u_0<u<c<v$ and $K\subset X_{u_0}$.  
 
Let $(L,h^L), (E,h^E)$ be holomorphic Hermitian line bundles on $X$. We have that the fundamental estimate holds in bidegree $(0,j)$ for forms with values in $L^k\otimes E$ for large $k$ and each $q\leq j\leq n$ on $X_c$ when $X$ is a $q$-convex manifold, see Proposition \ref{1coxFE}, which was obtained in \cite[Theorem 3.5.8]{MM} for the proof of Morse inequalities on $q$-convex manifold. For the sake of completeness, we prove it here. 

Firstly, we choose now a Hermitian metric on a $q$-convex manifold $X$.

\begin{lemma}(\cite[Lemma 3.5.3]{MM}). \label{lowbd_rho_lem}
	For any $C_1>0$ there exists a metric $g^{TX}$ (with Hermitian form $\omega$) on $X$ such that for any $j\geq q$ and any holomorphic Hermitian vector bundle $(F,h^F)$ on $X$,
	\begin{equation}
	\langle (\dbar\ddbar\varrho)(w_l,\ol{w}_k)\ol{w}^k\wedge i_{\ol{w}_l}s,s \rangle_h\geq C_1|s|^2, \quad s\in \Omega^{0,j}_0(X_v\setminus \ov X_u,F),
	\end{equation}
	where $\{ w_l \}_{l=1}^n$ is a local orthonormal frame of $T^{(1,0)}X$ with dual frame $\{ w^l\}_{l=1}^n$ of $T^{(1,0)*}X$.
\end{lemma}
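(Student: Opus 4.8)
The plan is to reduce the claimed pointwise inequality to a statement in linear algebra about the eigenvalues of $\sqrt{-1}\dbar\ddbar\varrho$ relative to the metric, and then to produce the metric $\omega$ by smooth functional calculus. First I would record the standard algebraic fact (the same diagonalization used for the curvature term in the Bochner--Kodaira--Nakano formula, cf.\ \cite{MM}): in an $\omega$-orthonormal frame $\{w_l\}$ the self-adjoint operator $s\mapsto \sum_{l,k}(\dbar\ddbar\varrho)(w_l,\ol w_k)\,\ol w^k\wedge i_{\ol w_l}s$ acting on $(0,j)$-forms has smallest eigenvalue $\lambda_1+\cdots+\lambda_j$, where $\lambda_1\le\cdots\le\lambda_n$ are the eigenvalues of $\sqrt{-1}\dbar\ddbar\varrho$ relative to $\omega$, and tensoring with $(F,h^F)$ leaves these eigenvalues unchanged. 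Since the asserted bound is pointwise, it therefore suffices to construct one metric $\omega$ for which, at every point of the compact set $\ov X_v\setminus X_u$ (which contains $\supp s$ for every $s\in\Omega^{0,j}_0(X_v\setminus\ov X_u,F)$), the sum of the $j$ smallest relative eigenvalues of $\sqrt{-1}\dbar\ddbar\varrho$ is $\ge C_1$ for all $j\ge q$ at once.

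Next I would set up the construction. Fix an auxiliary Hermitian metric $\omega_0$ and let $A$ be the $\omega_0$-self-adjoint endomorphism of $T^{(1,0)}X$ with $\sqrt{-1}\dbar\ddbar\varrho=\omega_0(A\,\cdot,\cdot)$; it is smooth with real eigenvalues $a_1(x)\le\cdots\le a_n(x)$. Because $\ov X_v\setminus X_u$ is compact and disjoint from the exceptional set (as $K\subset X_{u_0}\subset X_u$), $q$-convexity gives a uniform lower bound $a_q(x)\ge 2\delta_0>0$ there, while continuity gives a uniform bound $|a_l(x)|\le M$. I then set $\omega:=\omega_0(f(A)\,\cdot,\cdot)$, where $f:\R\to\R_{>0}$ is a smooth positive function chosen so that $g(a):=a/f(a)$ satisfies $g(a)=G$ for $a\ge 2\delta_0$ and $|g(a)|\le\varepsilon$ for $a\le\delta_0$, the constants $G$ and $\varepsilon$ being at our disposal (take $f$ a large constant on $(-\infty,\delta_0]$, put $f(a)=a/G$ on $[2\delta_0,\infty)$, and interpolate smoothly keeping $f>0$). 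Since $f$ is a fixed smooth positive function of the smooth self-adjoint field $A$, the endomorphism $f(A)$ is smooth and positive definite, so $\omega$ is a genuine smooth Hermitian metric on all of $X$; diagonalizing $A$ shows that the eigenvalues of $\sqrt{-1}\dbar\ddbar\varrho$ relative to $\omega$ are exactly $g(a_1),\dots,g(a_n)$.

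I would then verify the eigenvalue-sum bound. At each point of $\ov X_v\setminus X_u$ at least $n-q+1$ of the $a_l$ are $\ge 2\delta_0$, hence at least $n-q+1$ of the relative eigenvalues $g(a_l)$ equal $G$, while every relative eigenvalue is $\ge-\varepsilon$. Thus for every $j\ge q$ at most $q-1$ of the $j$ smallest relative eigenvalues can be less than $G$, so their sum is at least $(j-(q-1))\,G-(q-1)\varepsilon\ge G-(q-1)\varepsilon$. Choosing $G=C_1+1$ and $\varepsilon=1/q$ makes this $\ge C_1$, uniformly over the compact set and over all $j\ge q$. Combined with the algebraic reduction of the first paragraph, this gives the lemma.

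Finally, the main obstacle and why the above circumvents it. The naive first attempt --- decompose $T^{(1,0)}X$ into the positive eigenspaces of $\sqrt{-1}\dbar\ddbar\varrho$ and rescale the metric anisotropically, shrinking the positive directions and enlarging the rest --- breaks down precisely where eigenvalues of $A$ collide, since the individual eigenspaces and the "top $n-q+1$" subbundle need not vary smoothly, and one cannot patch subspaces by a partition of unity. The decisive point is that applying a fixed smooth function $f$ to the entrywise-smooth field $A$ is automatically smooth through all crossings, so $\omega=\omega_0(f(A)\,\cdot,\cdot)$ realizes the same geometric idea (large relative eigenvalues in the positive directions, near-zero in the others) by a single globally defined metric. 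The only care required is that $f$ depends on the prescribed $C_1$ and on the uniform constants $\delta_0,M$, which exist by compactness of $\ov X_v\setminus X_u$ together with $q$-convexity.
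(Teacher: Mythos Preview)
The paper does not prove this lemma; it is quoted from \cite[Lemma~3.5.3]{MM} without argument, so there is no in-paper proof to compare against. Your argument is correct and is essentially the standard construction: the linear-algebra reduction (in a diagonalizing frame the operator acts on $\ol w^J$ by $\sum_{l\in J}\lambda_l$, so its minimum on $(0,j)$-forms is $\lambda_1+\cdots+\lambda_j$) is exactly right, and building $\omega=\omega_0(f(A)\,\cdot,\cdot)$ by smooth functional calculus on the $\omega_0$-self-adjoint field $A$ is precisely the device that bypasses the non-smoothness of eigenspace decompositions you correctly identify as the obstacle.

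One small point worth recording explicitly: you assert that every relative eigenvalue on $\ov X_v\setminus X_u$ is $\ge-\varepsilon$, but for eigenvalues $a_l$ of $A$ landing in the interpolation zone $(\delta_0,2\delta_0)$ this is not covered by your stated conditions on $g$. It follows anyway since $g(a)=a/f(a)>0$ there (both $a$ and $f$ are positive), and with any monotone interpolation of $f$ one also gets $g\le G$ on that interval, which you implicitly use when sorting the relative eigenvalues to conclude $\mu_q=\cdots=\mu_n=G$. With that clarification the eigenvalue-sum bound $(j-q+1)G-(q-1)\varepsilon\ge G-(q-1)\varepsilon\ge C_1$ goes through exactly as written.
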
  

Now we consider the $q$-convex manifold $X$ associated with the metric $\omega$ obtained above as a Hermitian manifold $(X,\om)$. Note for arbitrary holomorphic vector bundle $F$ on a relatively compact domain $M$ in $X$, the Hilbert space adjoint $\ddbar_H^{F*}$ of $\ddbar^F$ coincides with the formal adjoint $\ddbar^{F*}$ of $\ddbar^F$ on $B^{0,j}(M,F)=\Dom(\ddbar_H^{F*})\cap \Omega^{0,j}(\overline{M},F)$, $1\leq j\leq n$. So we simply use the notion $\ddbar^{F*}$ on $B^{0,j}(M,F)$, $1\leq j\leq n$. 

Secondly, we modify Hermitian metric $h^L_\chi$ on $L$ and show the fundamental estimate fulfilled.
Let $\chi(t)\in\cC^\infty(\R)$ such that $\chi'(t)\geq 0$, $\chi''(t)\geq 0$. We define a Hermitian metric $h^{L^k}_\chi:=h^{L^k}e^{-k\chi(\varrho)}$ on $L^k$ for each $k\geq 1$ and set $L^k_\chi:=(L^k,h^{L^k}_\chi)$. Thus 
\begin{equation}
R^{L^k_\chi}=kR^{L_\chi}=kR^L+k\chi'(\varrho)\dbar\ddbar\varrho+k\chi''(\varrho)\dbar\varrho\wedge\ddbar\varrho.
\end{equation}

\begin{lemma}(\cite[(3.5.19)]{MM})\label{keylem}
	There exists $C_2>0$ and $C_3>0$ such that, if $\chi'(\varrho)\geq C_3$ on $X_v\setminus \overline{X}_u$, then
	\begin{equation}
	\|s\|^2\leq \frac{C_2}{k}( \|\ddbar^E_k s\|^2+\|\ddbar^{E*}_k s\|^2 )
	\end{equation}   
	for $s\in B^{0,j}(X_c,L^k\otimes E)$ with $\supp(s)\subset X_v\setminus \overline{X}_u$, $j\geq q$ and $k\geq 1$,
	where the $L^2$-norm $\|\cdot\|$ is given by $\omega$, $h^{L^k}_\chi$ and $h^E$ on $X_c$.
\end{lemma}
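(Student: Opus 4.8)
The plan is to derive the estimate from the Bochner--Kodaira--Nakano formula with boundary term applied to $L^k_\chi\otimes E$, exploiting that the modified curvature $R^{L^k_\chi}$ has a large positive part throughout the annular region carrying the support of $s$. First I would record the Bochner--Kodaira--Nakano identity for the $\ddbar$--Neumann problem on the relatively compact domain $X_c$, as in the proof of \cite[Theorem 3.5.8]{MM}. For $s\in B^{0,j}(X_c,L^k\otimes E)$ this produces, after absorbing the Hermitian torsion terms (the metric $\omega$ need not be K\"ahler), an inequality of the shape
\begin{equation*}
C'\big(\|\ddbar^E_k s\|^2+\|\ddbar^{E*}_k s\|^2\big)
\geq \int_{X_c}\big\langle (R^{L^k_\chi})(w_l,\ol w_k)\,\ol w^k\wedge i_{\ol w_l}s,\,s\big\rangle_h\,dv_X
+\int_{bX_c}\cL_\varrho(s,s)
-C_5\|s\|^2,
\end{equation*}
where $C'>0$ is a universal factor from the torsion absorption and $C_5>0$ collects the contributions of the fixed curvature $R^E$ together with the base geometry (the torsion and the curvature of $\Lambda^n T^{(1,0)}X$ entering through the passage from $(0,j)$- to $(n,j)$-forms). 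Crucially $C'$ and $C_5$ depend only on $\omega$, $(L,h^L)$ and $(E,h^E)$, and \emph{not} on $k$ or on $\chi$. Since $\supp(s)\subset X_v\setminus\ol X_u$ with $K\subset X_{u_0}$, and $\dbar\ddbar\varrho\geq0$ near $bX_c$, the boundary $bX_c$ is (weakly) pseudoconvex, so the Levi term $\int_{bX_c}\cL_\varrho(s,s)\geq0$ and may be discarded in a lower bound.

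Next I would substitute the curvature decomposition $R^{L^k_\chi}=kR^L+k\chi'(\varrho)\dbar\ddbar\varrho+k\chi''(\varrho)\dbar\varrho\wedge\ddbar\varrho$ into the curvature operator and estimate the three pieces pointwise on $\supp(s)$. By Lemma \ref{lowbd_rho_lem} the piece coming from $k\chi'(\varrho)\dbar\ddbar\varrho$ is bounded below by $k\chi'(\varrho)C_1|s|^2\geq kC_1C_3|s|^2$, using the hypothesis $\chi'(\varrho)\geq C_3$ on $X_v\setminus\ol X_u$. The piece from $k\chi''(\varrho)\dbar\varrho\wedge\ddbar\varrho$ is $\geq0$, because $\chi''\geq0$ and $\sqrt{-1}\dbar\varrho\wedge\ddbar\varrho$ is a positive semidefinite $(1,1)$-form, whose associated operator is nonnegative on $(0,j)$-forms. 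Finally the fixed term $kR^L$ contributes at least $-kC_6|s|^2$, where $C_6$ bounds the operator norm of the curvature of $(L,h^L)$ over the compact support region. Integrating, the interior integral is $\geq k(C_1C_3-C_6)\|s\|^2$.

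Combining these bounds and using $k\geq1$ to estimate $C_5\leq kC_5$, I obtain $C'(\|\ddbar^E_k s\|^2+\|\ddbar^{E*}_k s\|^2)\geq k(C_1C_3-C_6-C_5)\|s\|^2$. It then suffices to fix $C_3$ so large that $C_1C_3-C_6-C_5>0$; setting $C_2:=C'/(C_1C_3-C_6-C_5)$ yields the claimed inequality $\|s\|^2\leq \tfrac{C_2}{k}(\|\ddbar^E_k s\|^2+\|\ddbar^{E*}_k s\|^2)$ for all $k\geq1$ and all $j\geq q$. I expect the main obstacle to be the careful, $k$- and $\chi$-uniform bookkeeping in the first step: verifying that the Hermitian torsion terms and the $E$- and base-curvature corrections are genuinely $O(1)$, so that they are beaten by the $O(k)$ positive term, and that the Levi boundary term indeed has the correct sign over $\supp(s)$. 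The support condition $\supp(s)\subset X_v\setminus\ol X_u$ is essential here, as it confines $s$ to the region where Lemma \ref{lowbd_rho_lem} supplies the lower bound $C_1$ and where $\chi'\geq C_3$ is imposed.
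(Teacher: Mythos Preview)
Your approach is the standard one from \cite[\S3.5]{MM}, whence the paper quotes the lemma without proof, and the overall strategy (Bochner--Kodaira--Nakano with boundary term, curvature decomposition, choice of $C_3$ large) is correct. One point needs correction, however. You justify dropping the boundary Levi integral by asserting that $\dbar\ddbar\varrho\geq0$ near $bX_c$, so that $bX_c$ is weakly pseudoconvex. In the general $q$-convex setting of Lemma~\ref{keylem} this is not assumed: on $X\setminus K$ the form $\sqrt{-1}\dbar\ddbar\varrho$ is only required to have $n-q+1$ positive eigenvalues and may have up to $q-1$ negative ones (the plurisubharmonicity hypothesis of Theorem~\ref{qconvexpshthm} enters only later, in Proposition~\ref{pospos}, and the lemma is also invoked in the proof of Theorem~\ref{qconvexpos} where no such hypothesis is made). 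The correct reason the boundary term is nonnegative is again Lemma~\ref{lowbd_rho_lem}: the metric $\omega$ has been chosen so that the operator $(\dbar\ddbar\varrho)(w_l,\ol w_k)\,\ol w^k\wedge i_{\ol w_l}$ on $(0,j)$-forms with $j\geq q$ satisfies the pointwise bound $\langle\,\cdot\,s,s\rangle\geq C_1|s|^2$ on all of $X_v\setminus\ol X_u$, in particular on $bX_c\subset X_v\setminus\ol X_u$. With this amendment the rest of your bookkeeping (nonnegativity of the $\chi''$-term, the $-kC_6$ bound for the $R^L$ contribution, and the $k$-uniform $O(1)$ control of torsion and $E$-curvature) goes through as written.
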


\begin{lemma}\label{K'lem}
	Let $\epsilon>0$ satisfying $X_{c+\epsilon}\setminus \ov{X}_{c-\epsilon}:=\{ c-\epsilon<\varrho<c+\epsilon \}\Subset X_v\setminus \ov{X}_u$. Let $\phi\in \cC^\infty_0(X_v,\R)$ with $\supp(\phi)\subset X_v\setminus \ov{X}_u$ such that $0\leq \phi \leq 1$ and $\phi=1$ on $X_{c+\epsilon}\setminus \ov{X}_{c-\epsilon}$. Let $K':=\ov{X}_{c-\epsilon}:={\{\varrho\leq c-\epsilon\}}$.
	Then, for any $s\in B^{0,p}(X_c,L^k\otimes E)$, $1\leq p\leq n$, we have 
	\begin{equation}
	\|\phi s\|^2\geq \|s\|^2- \int_{K'}|s|dv_X,
	\end{equation} 
	where the Hermitian norm $|\cdot|$ and the $L^2$-norm $\|\cdot\|$ are given by $\omega$, $h^{L^k}_\chi$ and $h^E$ on $X_c$.
\end{lemma}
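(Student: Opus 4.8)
The plan is to exploit the fact that, although $\phi$ is merely a cut-off supported in $X_v\setminus\ov X_u$, it equals $1$ on precisely the portion of $X_c$ lying outside $K'$. The whole argument then reduces to splitting $\|\phi s\|^2$ according to the partition $X_c=K'\cup(X_c\setminus K')$ and discarding a manifestly nonnegative contribution; there is no analytic input beyond the pointwise bound $0\leq\phi\leq1$.

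First I would record the elementary set identity $X_c\setminus K'=\{c-\epsilon<\varrho<c\}$, immediate from $X_c=\{\varrho<c\}$ and $K'=\{\varrho\leq c-\epsilon\}$. Since $c<c+\epsilon$, this set is contained in $X_{c+\epsilon}\setminus\ov X_{c-\epsilon}=\{c-\epsilon<\varrho<c+\epsilon\}$, on which $\phi\equiv1$ by hypothesis; hence $\phi=1$ throughout $X_c\setminus K'$. This inclusion is the one point that genuinely requires checking.

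With this in hand I would compute
\[
\|\phi s\|^2=\int_{X_c}\phi^2|s|^2\,dv_X=\int_{K'}\phi^2|s|^2\,dv_X+\int_{X_c\setminus K'}|s|^2\,dv_X,
\]
where $\phi\equiv1$ on $X_c\setminus K'$ has been used in the last integral. Replacing $\int_{X_c\setminus K'}|s|^2\,dv_X$ by $\|s\|^2-\int_{K'}|s|^2\,dv_X$ then gives
\[
\|\phi s\|^2=\|s\|^2+\int_{K'}(\phi^2-1)|s|^2\,dv_X.
\]
Since $0\leq\phi\leq1$ forces $-1\leq\phi^2-1\leq0$, the correction term is bounded below by $-\int_{K'}|s|^2\,dv_X$, which yields exactly the asserted inequality.

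The main (and only) obstacle is thus purely combinatorial: verifying $X_c\setminus K'\subset X_{c+\epsilon}\setminus\ov X_{c-\epsilon}$ so that $\phi=1$ off $K'$. Once that is established the estimate follows by bounding the nonnegative term $\int_{K'}\phi^2|s|^2\,dv_X$ below by $0$ (equivalently, bounding $\int_{K'}(1-\phi^2)|s|^2\,dv_X$ above by $\int_{K'}|s|^2\,dv_X$). I note for consistency of the two squared $L^2$-norms on either side that the integrand in the stated conclusion should read $|s|^2\,dv_X$.
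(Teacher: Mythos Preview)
Your proof is correct and follows essentially the same route as the paper: split $X_c$ into $K'$ and $X_c\setminus K'=\{c-\epsilon<\varrho<c\}$, use $\phi\equiv1$ on the latter, and drop the nonnegative contribution $\int_{K'}\phi^2|s|^2\,dv_X$. The paper's proof additionally records that $\phi s\in B^{0,p}(X_c,L^k\otimes E)$ (since $\phi s=s$ near $bX_c$ preserves the $\ddbar$-Neumann boundary condition), a fact not needed for the inequality itself but used in the subsequent proposition; your observation that the integrand should be $|s|^2$ rather than $|s|$ is also correct.
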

\begin{proof}
	For $s\in B^{0,p}(X_c,L^k\otimes E)=\Dom(\ddbar^{E*}_k)\cap\Omega^{0,p}(\ov{X}_c,L^k\otimes E)$, $\phi s\in \Omega^{0,p}(\ov{X}_c,L^k\otimes E)$ and $i_{e_{\bn}^{(0,1)}}(\phi s)=i_{e_{\bn}^{(0,1)}}( s)=0$ on $bX_c$ by $\phi s=s$ on the neighbourhood $X_{c+\epsilon}\setminus \ov{X}_{c-\epsilon}$ of $bX_c$. Thus $\phi s \in B^{0,p}(X_c,L^k\otimes E)$ with $\supp(\phi s)\subset X_v\setminus \ov{X}_u$,
	\begin{eqnarray}\nonumber
	\|\phi s\|^2&=&\int_{X_c}|\phi s|^2dv_X=\int_{X_c\setminus \ov{X}_u}|\phi s|^2dv_X
	=\int_{\{c-\epsilon<\varrho<c\}}|\phi s|^2 dv_X+\int_{u<\varrho\leq c-\epsilon}|\phi s|^2 dv_X\\\nonumber
	&=&\int_{\{c-\epsilon<\varrho<c\}}|s|^2 dv_X+\int_{\{u<\varrho\leq c-\epsilon\}}|\phi s|^2 dv_X\\
	&\geq&\int_{X_c\setminus \ov{X}_{c-\epsilon}}|s|^2 dv_X
	=\|s\|^2-\int_{K'}|s|^2 dv_X.
	\end{eqnarray}
\end{proof} 
   
\begin{lemma}
	Let $\phi$ be in Lemma \ref{K'lem}, and let $\xi:=1-\phi$ and $C_1:=\sup_{x\in X_c}|d\xi(x)|_{g^{T^*X}}^2>0$. Then, for any $s\in B^{0,p}(X_c,L^k\otimes E)$, $1\leq p\leq n$, and $k\geq 1$, we have 
	\begin{equation}
	\|\ddbar^E_k(\xi s)\|^2+\|\ddbar^{E*}_k(\xi s)\|^2
	\leq \frac{3}{2}(\|\ddbar^E_k s\|^2+\|\ddbar^{E*}_k s\|^2)+6C_1\|s\|^2,
	\end{equation}
	\begin{equation}\label{cuteq_k}
	\frac{1}{k}(\|\ddbar^E_k (\phi s)\|^2+\|\ddbar^{E*}_k(\phi s)\|^2)
	\leq \frac{5}{k}(\|\ddbar^E_k s\|^2+\|\ddbar^{E*}_k s\|^2)+\frac{12C_1}{k}\|s\|^2,
	\end{equation}
	where the $L^2$-norm $\|\cdot\|$ is given by $\omega$, $h^{L^k}_\chi$ and $h^E$ on $X_c$.
\end{lemma}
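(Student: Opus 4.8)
The plan is to reduce both inequalities to the Leibniz rule for $\ddbar$ together with the dual (contraction) identity for its adjoint, and then to absorb the resulting zeroth-order error terms by the elementary estimate $\|a+b\|^2\leq(1+\varepsilon)\|a\|^2+(1+\varepsilon^{-1})\|b\|^2$. First I would record that $\xi=1-\phi$ is a real smooth function with $0\leq\xi\leq1$ which vanishes on a neighbourhood of $bX_c$ (there $\phi\equiv1$ by Lemma \ref{K'lem}), so multiplication by $\xi$ preserves membership in $B^{0,p}(X_c,L^k\otimes E)$ and the $\ddbar$-Neumann condition $i_{e_{\bn}^{(0,1)}}(\xi s)=\xi\,i_{e_{\bn}^{(0,1)}}s=0$ on $bX_c$; the same holds for $\phi$. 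On such forms the formal adjoint agrees with $\ddbar^{E*}_k$, so the two product identities
\begin{equation*}
\ddbar^E_k(\xi s)=\xi\,\ddbar^E_k s+\ddbar\xi\wedge s,\qquad
\ddbar^{E*}_k(\xi s)=\xi\,\ddbar^{E*}_k s-i_{\overline{\ddbar\xi}}s
\end{equation*}
are valid, where $i_{\overline{\ddbar\xi}}$ is the pointwise adjoint of exterior multiplication by the $(0,1)$-form $\ddbar\xi$. The second identity is just the formal adjoint of the commutator $[\ddbar^E_k,\xi]=\ddbar\xi\wedge\,\cdot\,$, using that $\xi$ is real so that multiplication by $\xi$ is self-adjoint.

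The next step is the pointwise control of the error terms. Since $\ddbar\xi$ is a $(0,1)$-form, $|\ddbar\xi|^2\leq|d\xi|^2\leq C_1$, and $|\overline{\ddbar\xi}|=|\ddbar\xi|$, so $|\ddbar\xi\wedge s|\leq|\ddbar\xi|\,|s|$ and $|i_{\overline{\ddbar\xi}}s|\leq|\ddbar\xi|\,|s|$ give, after integration,
\begin{equation*}
\|\ddbar\xi\wedge s\|^2\leq C_1\|s\|^2,\qquad
\|i_{\overline{\ddbar\xi}}s\|^2\leq C_1\|s\|^2.
\end{equation*}
Applying $\|a+b\|^2\leq(1+\varepsilon)\|a\|^2+(1+\varepsilon^{-1})\|b\|^2$ with $\varepsilon=\tfrac12$ to each product identity, and using $0\leq\xi\leq1$ to bound $\|\xi\,\ddbar^E_k s\|\leq\|\ddbar^E_k s\|$ and $\|\xi\,\ddbar^{E*}_k s\|\leq\|\ddbar^{E*}_k s\|$, I obtain
\begin{equation*}
\|\ddbar^E_k(\xi s)\|^2\leq\tfrac32\|\ddbar^E_k s\|^2+3C_1\|s\|^2,\qquad
\|\ddbar^{E*}_k(\xi s)\|^2\leq\tfrac32\|\ddbar^{E*}_k s\|^2+3C_1\|s\|^2.
\end{equation*}
Adding the two lines gives the first asserted inequality, the constant $6C_1=3C_1+3C_1$ arising from the two factors $1+\varepsilon^{-1}=3$.

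Finally, the second inequality follows from the first with no further analysis. Writing $\phi s=s-\xi s$, hence $\ddbar^E_k(\phi s)=\ddbar^E_k s-\ddbar^E_k(\xi s)$ and likewise for $\ddbar^{E*}_k$, the bound $\|a-b\|^2\leq2\|a\|^2+2\|b\|^2$ yields
\begin{equation*}
\|\ddbar^E_k(\phi s)\|^2+\|\ddbar^{E*}_k(\phi s)\|^2
\leq2(\|\ddbar^E_k s\|^2+\|\ddbar^{E*}_k s\|^2)
+2(\|\ddbar^E_k(\xi s)\|^2+\|\ddbar^{E*}_k(\xi s)\|^2).
\end{equation*}
Substituting the first inequality into the last term turns the right-hand side into $5(\|\ddbar^E_k s\|^2+\|\ddbar^{E*}_k s\|^2)+12C_1\|s\|^2$ (since $2+2\cdot\tfrac32=5$ and $2\cdot6C_1=12C_1$), and dividing by $k$ gives \eqref{cuteq_k}.

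The only genuinely delicate point is the first paragraph: one must verify that multiplication by the cutoff keeps the form inside $B^{0,p}(X_c,L^k\otimes E)$ and respects the Neumann boundary condition, so that $\ddbar^{E*}_k$ may be replaced by the formal adjoint and the commutator identity applied — this is exactly where the support properties of $\phi$ from Lemma \ref{K'lem} enter. Everything after that is bookkeeping: the pointwise majorization of the wedge and contraction terms by $|d\xi|^2\leq C_1$, and two applications of the weighted triangle inequality with the choices $\varepsilon=\tfrac12$ and splitting constant $2$, which reproduce precisely the constants $\tfrac32,6$ and $5,12$.
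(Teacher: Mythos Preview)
Your argument is correct and follows essentially the same route as the paper. The paper simply cites \cite[(3.2.8)]{MM} for the first inequality, whereas you spell out the Leibniz/commutator identities and the weighted triangle inequality that underlie it; for the second inequality both you and the paper write $\phi s=s-\xi s$, apply the elementary bound $\tfrac12\|a\|^2\leq\|a-b\|^2+\|b\|^2$ (equivalently $\|a-b\|^2\leq2\|a\|^2+2\|b\|^2$), and substitute the first inequality to get the constants $5$ and $12C_1$.
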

\begin{proof} 
	The first inequality follows from \cite[(3.2.8)]{MM}. For simplifying notations, we use $\ddbar$ and $\ddbar^*$ instead of $\ddbar^E_k$ and $\ddbar^{E*}_k$ respectively. From $\frac{1}{2}\|\ddbar(\phi s)\|^2-\|\ddbar s\|^2\leq \|\ddbar s-\ddbar(\phi s)\|^2$, $\frac{1}{2}\|\ddbar^*(\phi s)\|^2-\|\ddbar^* s\|^2\leq \|\ddbar^* s-\ddbar^*(\phi s)\|^2$
	and the first inequality, we have
	\begin{equation}
	\frac{1}{2}(\|\ddbar(\phi s)\|^2+\|\ddbar^*(\phi s)\|^2)\leq \frac{5}{2}(\|\ddbar s\|^2+\|\ddbar^* s\|^2)+6C_1\|s\|^2,
	\end{equation} 
	thus the second inequality follows.
\end{proof}

\begin{prop}\label{1coxFE}
	Let $X$ be a $q$-convex manifold of dimension $n$ with the exceptional set $K\subset X_c$. Then there exists a compact subset $K'\subset X_c$ and $C_0>0$ such that for sufficiently large $k$, we have  
	\begin{equation}
	\|s\|^2\leq \frac{C_0}{k}(\|\ddbar^E_ks\|^2+\|\ddbar^{E*}_{k,H}s\|^2)+C_0\int_{K'} |s|^2 dv_X
	\end{equation}
	for any $s\in \Dom(\ddbar^E_k)\cap \Dom(\ddbar^{E*}_{k,H})\cap L^2_{0,j}(X_c,L^k\otimes E)$ and $q\leq j \leq n$,
	where $\chi'(\varrho)\geq C_3$ on $X_v\setminus \overline{X}_u$ in Lemma \ref{keylem} and the $L^2$-norm is given by $\omega$, $h^{L^k}_\chi$ and $h^E$ on $X_c$.
\end{prop}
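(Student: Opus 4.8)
The plan is to assemble the three preceding lemmas into the single fundamental estimate on $X_c$; the only genuinely new ingredient is the absorption of a curvature-independent error term that the cutoff introduces. First I would reduce to smooth forms: since $B^{0,\bullet}(X_c,L^k\otimes E)$ is dense in $\Dom(\ddbar^E_k)\cap\Dom(\ddbar^{E*}_{k,H})$ in the graph norm of $\ddbar^E_k+\ddbar^{E*}_{k,H}$ by \cite[Lemma 3.5.1]{MM}, and since on $B^{0,j}(X_c,L^k\otimes E)$ the Hilbert space adjoint $\ddbar^{E*}_{k,H}$ coincides with the formal adjoint $\ddbar^{E*}_k$, it suffices to prove the inequality for $s\in B^{0,j}(X_c,L^k\otimes E)$ with $q\leq j\leq n$; the general case then follows by passing to the limit in the graph norm.

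Fixing such an $s$, and letting $\phi$ and $K'=\ov X_{c-\epsilon}$ be as in Lemma \ref{K'lem}, I would run the following chain of estimates. Lemma \ref{K'lem} gives
\[
\|s\|^2\leq \|\phi s\|^2+\int_{K'}|s|^2\,dv_X.
\]
Because $\supp(\phi s)\subset X_v\setminus\ov X_u$ and $\phi s\in B^{0,j}(X_c,L^k\otimes E)$, Lemma \ref{keylem} applies to $\phi s$ and yields
\[
\|\phi s\|^2\leq \frac{C_2}{k}\bigl(\|\ddbar^E_k(\phi s)\|^2+\|\ddbar^{E*}_k(\phi s)\|^2\bigr).
\]
Estimating the right-hand side by the commutator bound \eqref{cuteq_k} and combining the three displays produces
\[
\|s\|^2\leq \frac{5C_2}{k}\bigl(\|\ddbar^E_k s\|^2+\|\ddbar^{E*}_k s\|^2\bigr)+\frac{12C_1C_2}{k}\|s\|^2+\int_{K'}|s|^2\,dv_X,
\]
where $C_1=\sup_{X_c}|d\xi|^2$ is the fixed constant appearing in that lemma.

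The decisive point is that the error term $\frac{12C_1C_2}{k}\|s\|^2$ created by differentiating the cutoff carries a factor $1/k$, so it can be absorbed into the left-hand side once $k$ is large. Concretely, I would choose $k_0$ with $12C_1C_2/k_0\leq\tfrac12$ and restrict to $k\geq k_0$, which leaves $\tfrac12\|s\|^2$ on the left and gives
\[
\|s\|^2\leq \frac{10C_2}{k}\bigl(\|\ddbar^E_k s\|^2+\|\ddbar^{E*}_k s\|^2\bigr)+2\int_{K'}|s|^2\,dv_X.
\]
Setting $C_0:=\max\{10C_2,2\}$ then yields the claimed estimate for smooth $s$, and the density reduction of the first paragraph completes the proof. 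I expect the main obstacle to be purely bookkeeping: checking that $\phi s$ really remains in the $\ddbar$-Neumann domain $B^{0,j}$ with support in $X_v\setminus\ov X_u$ so that Lemma \ref{keylem} is legitimately applicable (this is exactly the computation carried out in Lemma \ref{K'lem}), and confirming that $C_1$ and $C_2$ are independent of $k$ so that the absorption is uniform. Conceptually, the heart of the matter is recognizing that the $1/k$ gain coming from the modified metric $h^{L^k}_\chi$ in Lemma \ref{keylem} is precisely what dominates the $O(1/k)$ error generated by the cutoff.
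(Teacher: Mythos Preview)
Your proof is correct and follows essentially the same approach as the paper: reduce to $B^{0,j}(X_c,L^k\otimes E)$ by density, apply Lemma~\ref{keylem} to $\phi s$, control the cutoff error via \eqref{cuteq_k} and Lemma~\ref{K'lem}, and absorb the $\tfrac{12C_1C_2}{k}\|s\|^2$ term for $k\geq 24C_1C_2$, arriving at the same constant $C_0=\max\{10C_2,2\}$.
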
    
\begin{proof}
	We follow \cite[Theorem 3.5.8]{MM}.
	Since $B^{0,j}(X_c,L^k\otimes E)$ is dense in $\Dom(\ddbar^E_k)\cap \Dom(\ddbar^{E*}_{k,H})\cap L^2_{0,j}(X_c,L^k\otimes E)$ with respect to the graph norm of $\ddbar^E_k+\ddbar^{E*}_{k,H}$, we only to show this inequality holds for $s\in B^{0,j}(X_c,L^k\otimes E)$ with $j\geq q$ and large $k$.
	
	Suppose now $s\in B^{0,j}(X_c,L^k\otimes E)$. Let $\phi$ be in Lemma \ref{K'lem}. Thus $\phi s \in B^{0,j}(X_c,L^k\otimes E)$ with $\supp(\phi s)\subset X_v\setminus \ov{X}_u$. By Lemma \ref{keylem}, there exists $C_2>0$ and $C_3>0$ such that for $j\geq q$ and $k\geq 1$, we have
	\begin{equation}
	\|\phi s\|^2\leq \frac{C_2}{k}( \|\ddbar^E_k (\phi s)\|^2+\|\ddbar^{E*}_k (\phi s)\|^2 )
	\end{equation} 
	where $\chi'(\varrho)\geq C_3$ on $X_v\setminus \overline{X}_u$ and the $L^2$-norm $\|\cdot\|$ is given by $\omega$, $h^{L^k}_\chi$ and $h^E$ on $X_c$. Next applying (\ref{cuteq_k}) and Lemma \ref{K'lem}, we obtain
	\begin{equation}
	\|s\|^2-\int_{K'}|s|^2dv_X\leq \frac{5C_2}{k}(\|\ddbar^E_k s\|^2+\|\ddbar^{E*}_k s\|^2)+\frac{12C_1C_2}{k}\|s\|^2.
	\end{equation}
	For $k\geq 24C_1C_2$, it follows that $1-\frac{12C_1C_2}{k}\geq \frac{1}{2}$ and 
	\begin{equation}
	\|s\|^2\leq \frac{10C_2}{k}(\|\ddbar^E_k s\|^2+\|\ddbar^{E*}_k s\|^2)+2\int_{K'}|s|^2 dv_X.
	\end{equation}
	The proof is comlete by choosing $C_0:=\max\{10C_2,2\}$ and $k\geq 24C_1C_2$.
\end{proof}  

Thirdly, we will show that $(L_\chi, h^{L_\chi})$
is semi-positive if $(L, h^L)$ is semipositive by choosing a appropriate $\chi$. 
Let $C_2>0$ and $C_3>0$ be in Lemma \ref{keylem}. 
We choose $\chi\in\cC^\infty(\R)$ such that $\chi''(t)\geq 0$, 
$\chi'(t)\geq C_3$ on $(u,v)$ and $\chi(t)=0$ on $(-\infty,u_0)$. 
Therefore, $\chi'(\varrho(x))\geq C_3>0$ on $X_v\setminus \overline{X}_u$ 
and $\chi(\varrho(x))=\chi'(\varrho(x))=0$ on $X_{u_0}$. 
Note that $K\subset X_{u_0}$ and $u_0<u<c<v$. 
Now we have a fixed $\chi$ which leads to the following proposition.
\begin{prop}\label{pospos}
	Let $X$ be a $q$-convex manifold with $\sqrt{-1}\dbar\ddbar\varrho\geq 0$ on $X_c\setminus K$. If $(L,h^L)$ is semipositive on $X_c$, then $(L,h^{L}_\chi)$ is semipositive on $X_c$ for $\chi$ defined above.
\end{prop}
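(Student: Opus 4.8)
The plan is to verify directly that the Chern curvature of the modified metric $h^L_\chi=h^Le^{-\chi(\varrho)}$ is positive semi-definite at every point of $X_c$, that is, that $\sqrt{-1}R^{L_\chi}\geq 0$ there, which is exactly the assertion $c_1(L,h^L_\chi)\geq 0$. Taking $k=1$ in the curvature identity recorded just before Lemma \ref{keylem} gives
\begin{equation*}
\sqrt{-1}R^{L_\chi}=\sqrt{-1}R^{L}+\chi'(\varrho)\,\sqrt{-1}\dbar\ddbar\varrho+\chi''(\varrho)\,\sqrt{-1}\dbar\varrho\wedge\ddbar\varrho .
\end{equation*}
Since $\sqrt{-1}\dbar\varrho\wedge\ddbar\varrho\geq 0$ holds automatically for the real function $\varrho$ (it is a rank-one positive semi-definite $(1,1)$-form), and $\chi'\geq 0$, $\chi''\geq 0$ by construction, the only term whose sign is not immediate is $\chi'(\varrho)\,\sqrt{-1}\dbar\ddbar\varrho$, and this is where the specific shape of $\chi$ must be exploited.

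I would then split $X_c$ according to the exhaustion value. On the sublevel set $X_{u_0}$ one has $\varrho<u_0$, hence $\chi(\varrho)=\chi'(\varrho)=\chi''(\varrho)=0$, because $\chi$ was chosen to vanish identically on $(-\infty,u_0)$; there the displayed identity collapses to $\sqrt{-1}R^{L_\chi}=\sqrt{-1}R^{L}\geq 0$, using only that $(L,h^L)$ is semipositive on $X_c$. On the complementary region $X_c\cap\{\varrho\geq u_0\}$ the inclusion $K\subset X_{u_0}=\{\varrho<u_0\}$ forces every such point to lie in $X_c\setminus K$, so the hypothesis $\sqrt{-1}\dbar\ddbar\varrho\geq 0$ on $X_c\setminus K$ applies. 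Combined with $\chi'\geq 0$ this makes the middle term semipositive; the last term is semipositive as already noted; and $\sqrt{-1}R^L\geq 0$ again by semipositivity of $(L,h^L)$ on $X_c$. As a sum of positive semi-definite $(1,1)$-forms is positive semi-definite, $\sqrt{-1}R^{L_\chi}\geq 0$ on this region as well, and combining the two regions yields $c_1(L,h^L_\chi)\geq 0$ throughout $X_c$.

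The one point I would stress is the interplay between the two pieces of data, which is really the whole content of the argument. Near $K$ the function $\varrho$ need not be plurisubharmonic, so $\chi'(\varrho)\,\sqrt{-1}\dbar\ddbar\varrho$ could a priori be negative; this is \emph{precisely} neutralised by having arranged $\chi$ to be flat (vanishing together with all its derivatives) on $X_{u_0}\supset K$. Conversely, away from $K$ the plurisubharmonicity of $\varrho$ takes over and renders the $\chi'$-term harmless. Thus no estimates are needed — only the pointwise semipositivity bookkeeping, organised by matching the flatness locus of $\chi$ to the set where $\varrho$ may fail to be plurisubharmonic through the chain $K\subset X_{u_0}$ and $u_0<u<c<v$ fixed earlier.
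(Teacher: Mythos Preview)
Your proof is correct and follows essentially the same approach as the paper: both decompose $\sqrt{-1}R^{L_\chi}$ via the curvature identity and verify semipositivity term by term, using that $\chi'(\varrho)$ vanishes on the region (containing $K$) where $\varrho$ may fail to be plurisubharmonic, while $\sqrt{-1}\dbar\ddbar\varrho\geq 0$ on $X_c\setminus K$ handles the rest. The only cosmetic difference is that the paper splits $X_c$ into $K$ versus $X_c\setminus K$, whereas you split into $X_{u_0}$ versus $X_c\cap\{\varrho\geq u_0\}$; your version is in fact slightly more explicit about why the potentially bad term $\chi'(\varrho)\sqrt{-1}\dbar\ddbar\varrho$ disappears near $K$.
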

\begin{proof}
	From the above definition of $\chi$, we have $\chi'(\varrho)\geq 0$ on $X$, $\chi'(\varrho)=0$ on $K$. Since $\varrho$ is plurisubharmonic on $X_c\setminus K$, i.e., $\imat\dbar\ddbar\varrho\geq 0$ on $X_c\setminus K$, we have $\imat \chi'(\varrho)\dbar\ddbar\varrho\geq 0$ on $X_c$. Since $\chi''(\varrho)\geq 0$ and  $\imat\dbar\varrho\wedge\ddbar\varrho\geq 0$ on $X_c$, we have $\imat \chi''(\varrho)\dbar\varrho\wedge\ddbar\varrho\geq 0$ on $X_c$. Finally $\imat R^{L_\chi}=\imat R^L+\imat \chi'(\varrho)\dbar\ddbar\varrho+\imat \chi''(\varrho)\dbar\varrho\wedge\ddbar\varrho\geq 0$ on $X_c$.
\end{proof}

Now we can prove the main result of this section as follows.
 
\begin{proof}[Proof of Theorem \ref{qconvexpshthm}]
By Proposition \ref{pospos} with the fixed $\chi$, 
Proposition \ref{1coxFE} and using Theorem \ref{L2FE} 
for $X_c$ endowed with Hermitian metric $\omega$ obtained in 
Lemma \ref{lowbd_rho_lem}, there exists $C>0$ 
such that for any $j\geq q$ and sufficiently large $k$,
\begin{equation}
\dim H_{(2)}^{0,j}(X_c,L^k\otimes E)=\dim \cH^{0,j}(X_c,L^k\otimes E)\leq Ck^{n-j}
\end{equation}
holds with respect to the metrics $\omega$, $h^{L}_\chi$ and $h^E$ on $X_c$.
From \cite[Theorem 3.5.6 (H\"{o}rmander), Theorem 3.5.7 (Andreotti-Grauert)(i), Theorem B.4.4 (The Dolbeault isomorphism)]{MM}, we have for $j\geq q$,
$$H^j(X,L^k\otimes E)\cong H^j(X_v,L^k\otimes E)\cong 
H^{0,j}(X_v,L^k\otimes E)\cong H_{(2)}^{0,j}(X_c,L^k\otimes E).$$ 
Thus the conclusion holds for sufficiently large $k$. 
Also we know that for any holomorphic vector bundle $F$, 
$\dim H^j(X,F)<\infty$ for $j\geq q$ by 
\cite[Theorem B.4.8 (Andreotti-Grauert)]{MM}. 
So the conclusion holds for all $k\geq 1$.
\end{proof}
 
\begin{proof}[Proof of Theorem \ref{T:1c}] 
Apply Theorem \ref{qconvexpshthm} for $1$-convex manifolds.
\end{proof} 

By adapting the duality formula \cite[20.7 Theorem]{HL:88} 
for cohomology groups to Theorem \ref{qconvexpshthm}, 
we have the analogue result to \cite[Remark 4.4]{Wh:16} 
for seminegative line bundles.
\begin{cor}
	Let $X$ be a $q$-convex manifold of dimension $n$ with a plurisubharmonic exhaustion function $\varrho$ near the exceptional set $K$, and let $(L,h^L), (E,h^E)$ be holomorphic Hermitian line bundles on $X$. Let $(L,h^L)$ be seminegative on a sublevel set $X_c=\{x\in X: \varrho(x)<c \}$ satisfying $K\subset X_c$. Then there exists $C>0$ such that for any $0\leq j\leq n-q$ and  $k\geq 1$, the $j$-th cohomology with compact supports
	\begin{equation}
	\dim [H^{0,j}(X,L^k\otimes E)]_0\leq Ck^{j}.
	\end{equation}
	In particular, 
	$\dim [H^{0,0}(X,L^k\otimes E)]_0\leq C$.
\end{cor}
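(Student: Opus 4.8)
The plan is to reduce the assertion to the semipositive case, Theorem \ref{qconvexpshthm}, by Serre duality. Since $(L,h^L)$ is seminegative on $X_c$, the dual line bundle $L^*$ equipped with the induced metric $h^{L^*}=(h^L)^{-1}$ satisfies $c_1(L^*,h^{L^*})=-c_1(L,h^L)\geq 0$ on $X_c$, so $L^*$ is semipositive there. First I would set $\til L:=L^*$ and $\til E:=K_X\otimes E^*$, endowed with any smooth Hermitian metric, where $K_X$ denotes the canonical line bundle. Then $(\til L, h^{\til L})$ and $(\til E, h^{\til E})$ satisfy the hypotheses of Theorem \ref{qconvexpshthm} for the same exhaustion function $\varrho$, exceptional set $K$ and sublevel set $X_c$: indeed $\til L$ is semipositive on $X_c$, we have $K\subset X_c$, and $\imat\dbar\ddbar\varrho\geq 0$ on $X_c\setminus K$ because $\varrho$ is plurisubharmonic near $K$.

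Next I would record the duality. On the $n$-dimensional complex manifold $X$ the Serre duality pairing identifies compactly supported Dolbeault cohomology in bidegree $(0,j)$ with ordinary Dolbeault cohomology in the complementary bidegree for the dual bundle. Using the identification $H^{n,n-j}(X,L^{-k}\otimes E^*)\cong H^{0,n-j}(X,K_X\otimes L^{-k}\otimes E^*)$ and adapting \cite[20.7 Theorem]{HL:88} to the present setting, I expect a canonical isomorphism
\begin{equation}
[H^{0,j}(X,L^k\otimes E)]_0\cong H^{0,n-j}(X,\til L^{\,k}\otimes\til E)^*,
\end{equation}
where $(\,\cdot\,)^*$ denotes the dual vector space. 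On a $q$-convex manifold the right-hand group is finite-dimensional for $n-j\geq q$ by \cite[Theorem B.4.8]{MM}, hence so is the left-hand group, and the two dimensions coincide.

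With this in hand the estimate is immediate. Fix $0\leq j\leq n-q$, so that $n-j\geq q$. Applying Theorem \ref{qconvexpshthm} to $(\til L, h^{\til L})$ and $(\til E, h^{\til E})$ in cohomological degree $n-j$ furnishes a constant $C>0$, independent of $k$, with $\dim H^{0,n-j}(X,\til L^{\,k}\otimes\til E)\leq Ck^{\,n-(n-j)}=Ck^{\,j}$ for all $k\geq 1$. Combined with the duality isomorphism this yields $\dim[H^{0,j}(X,L^k\otimes E)]_0\leq Ck^{j}$ for $0\leq j\leq n-q$, and the case $j=0$ gives the uniform bound $\dim[H^{0,0}(X,L^k\otimes E)]_0\leq C$. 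The hard part will be the precise formulation of Serre duality in the $q$-convex, possibly non-compact, setting: one must check that the Dolbeault groups involved are Hausdorff and finite-dimensional so that the topological duality of \cite[20.7 Theorem]{HL:88} collapses to a plain duality of finite-dimensional vector spaces of equal dimension. This is exactly what the Andreotti--Grauert finiteness theorem guarantees in the complementary degree range $n-j\geq q$; the remaining verifications, namely the semipositivity of $L^*$ and the transfer of the exhaustion-function hypotheses, are routine.
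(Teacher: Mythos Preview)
Your proposal is correct and follows exactly the paper's approach: the paper's proof consists of the single sentence ``Combine \cite[20.7 Theorem]{HL:88} and Theorem \ref{qconvexpshthm},'' and you have spelled out precisely this combination---passing to the dual line bundle $L^*$ (semipositive on $X_c$) twisted by $K_X\otimes E^*$, invoking Serre duality in the $q$-convex setting to identify $[H^{0,j}(X,L^k\otimes E)]_0$ with the dual of $H^{0,n-j}(X,\til L^{\,k}\otimes\til E)$, and then applying Theorem \ref{qconvexpshthm} in degree $n-j\geq q$. Your additional remarks on why the topological duality collapses to an equality of finite dimensions (via Andreotti--Grauert finiteness) are the right justification and are implicit in the paper's terse proof.
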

\begin{proof}
	Combine \cite[20.7 Theorem]{HL:88} and Theorem \ref{qconvexpshthm}.
\end{proof}

\subsubsection{Line bundles with positivity near the exceptional set}
A natural question in Theorem \ref{qconvexpshthm} is whether the hypothesis on plurisubharmonic exhaustion function is necessary. In this section, we show that such hypothesis can be replaced by the positivity assumption of line bundle near the exceptional set.

\begin{proof}[Proof of Theorem \ref{qconvexpos}]
	Let $X$ be a $q$-convex manifold of dimension $n$, let $(L,h^L), (E,h^E)$ be holomorphic Hermitian line bundles on $X$. Let $\varrho\in \cC^\infty(X,\R)$ be the exhaustion function satisfying $\sqrt{-1}\dbar\ddbar\varrho$ has $n-q+1$ positive eigenvalues on $X\setminus K$.
	
	Since $(L,h^L)$ be semipositive on a sublevel set $X_c=\{x\in X: \varrho(x)<c \}$ satisfying the exceptional set $K\subset X_c$ and positive on $X_c\setminus K$, we fix real numbers $u_0, u, c'$ and $v$ satisfying $u_0<u<c'<v<c$ and $K\subset X_{u_0}$. Thus for $X_{c'}:=\{ x\in X: \varrho(x)<c' \}$ such that $K\subset X_{c'}$ and $L\geq 0$ on $X_{c'}$ and $L>0$ on $X_v\setminus K$. For simplifying notions, we still denote $c'$ by $c$ in this proof, that is, there exists real numbers $u_0, u, c$ and $v$ satisfying $u_0<u<c<v$ and $K\subset X_{u_0}$ and $L\geq 0$ on $X_c$ and $L>0$ on $X_v\setminus K$.
	
	Firstly, we choose the metric $\omega$ on $X$ from Lemma \ref{lowbd_rho_lem}. 
	
	Secondly, we show the fundamental estimate holds. Note $L>0$ on $X_v\setminus K$, by the same argument in Lemma \ref{keylem} without modification of $h^L$ by $h^L_\chi$, we have that
	there exists $C_2>0$ and $k_0>0$  such that
	\begin{equation}\label{eqfe1}
	\|s\|^2\leq \frac{C_2}{k}( \|\ddbar^E_k s\|^2+\|\ddbar^{E*}_k s\|^2 )
	\end{equation}    
	for $s\in B^{0,j}(X_c,L^k\otimes E)$ with $\supp(s)\subset X_v\setminus \overline{X}_u$, $j\geq q$ and $k\geq k_0>0$,
	where the $L^2$-norm $\|\cdot\|$ is given by $\omega$, $h^{L^k}$ and $h^E$ on $X_c$. As in Lemma \ref{1coxFE} without the modification of $h^L$, we conclude that
	there exist a compact subset $K'\subset X_c$ (In fact, let $\epsilon>0$ such that $\{ c-\epsilon<\varrho<c+\epsilon \}\Subset X_v\setminus \ov{X}_u$, $K':=\ov{\{ \varrho<c-\epsilon \}}$ as in Lemma \ref{K'lem}) and $C_0>0$ such that for sufficiently large $k$, we have 
	\begin{equation}
	\|s\|^2\leq \frac{C_0}{k}(\|\ddbar^E_ks\|^2+\|\ddbar^{E*}_{k,H} s\|^2)+C_0\int_{K'} |s|^2 dv_X
	\end{equation}
	for any $s\in \Dom(\ddbar^E_k)\cap \Dom(\ddbar^{E*}_{k,H})\cap L^2_{0,j}(X_c,L^k\otimes E)$ and each $q\leq j \leq n$,
	where the $L^2$-norm is given by $\omega$, $h^{L^k}$ and $h^E$ on $X_c$. 
	 
	Finally, we can apply Theorem \ref{L2FE} on $X_c$. Therefore, there exists $C>0$ such that for any $j\geq q$ and sufficiently large $k$,
	\begin{equation}
	\dim H_{(2)}^{0,j}(X_c,L^k\otimes E)\leq Ck^{n-j}
	\end{equation}
	holds with respect to the metrics $\omega$, $h^{L}$ and $h^E$ on $X_c$.   
	As in the proof of Theorem \ref{qconvexpshthm}, the conclusion holds for all $k\geq 1$.
\end{proof}	
  
By adapting the duality formula \cite[20.7 Theorem]{HL:88} for cohomology groups to Theorem \ref{qconvexpos}, we have the analogue result to \cite[Remark 4.4]{Wh:16} for seminegative line bundles.
\begin{cor}
	Let $X$ be a $q$-convex manifold of dimension $n$, and let $(L,h^L), (E,h^E)$ be holomorphic Hermitian line bundles on $X$. Let $\varrho$ be an exhaustion function of $X$ and $K$ the exceptional set. Let $(L,h^L)$ be seminegative on a sublevel set $X_c=\{x\in X: \varrho(x)<c \}$ satisfying $K\subset X_c$, and let $(L,h^L)$ be negative on $X_c\setminus K$.	
	Then there exists $C>0$ such that for any $0\leq j\leq n-q$ and  $k\geq 1$, the $j$-th cohomology with compact supports
	\begin{equation}
	\dim [H^{0,j}(X,L^k\otimes E)]_0\leq Ck^{j}.
	\end{equation}
	In particular, 
	$\dim [H^{0,0}(X,L^k\otimes E)]_0\leq C$.
\end{cor}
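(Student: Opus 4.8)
The plan is to deduce the statement from Theorem \ref{qconvexpos} by passing to the dual line bundle via Serre-type duality. Since $(L,h^L)$ is seminegative on $X_c$ and negative on $X_c\setminus K$, the dual $(L^{-1},(h^L)^{-1})$ has curvature $c_1(L^{-1},(h^L)^{-1})=-c_1(L,h^L)$, hence is semipositive on $X_c$ and positive on $X_c\setminus K$; this is precisely the positivity to which Theorem \ref{qconvexpos} applies on the $q$-convex manifold $X$. The compactly supported cohomology $[H^{0,j}(X,L^k\otimes E)]_0$ of the seminegative bundle should be dual to the ordinary cohomology in complementary degree of a canonical twist of $L^{-k}$, whose growth is then controlled by Theorem \ref{qconvexpos}.

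First I would invoke the duality formula \cite[20.7 Theorem]{HL:88}, which on an $n$-dimensional $q$-convex manifold provides, for a holomorphic vector bundle $G$ and degrees in which the cohomology is finite-dimensional, a nondegenerate pairing
\begin{equation}
[H^{0,n-p}(X,K_X\otimes G^*)]_0\times H^{0,p}(X,G)\longrightarrow \C,\qquad (\beta,\alpha)\longmapsto \int_X \alpha\wedge\beta,
\end{equation}
after the canonical contraction $G\otimes G^*\to\C$, giving $[H^{0,n-p}(X,K_X\otimes G^*)]_0\cong H^{0,p}(X,G)^*$. I would apply this with $G:=L^{-k}\otimes K_X\otimes E^*$ and $p:=n-j$. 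Since $K_X\otimes G^*=L^k\otimes E$, this identifies
\begin{equation}
[H^{0,j}(X,L^k\otimes E)]_0\cong H^{0,n-j}(X,L^{-k}\otimes K_X\otimes E^*)^*.
\end{equation}

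It then remains to bound the right-hand side. Set $E':=K_X\otimes E^*$, a holomorphic line bundle with an induced Hermitian metric. Applying Theorem \ref{qconvexpos} to the pair $(L^{-1},E')$, which is legitimate because $L^{-1}$ is semipositive on $X_c$ and positive on $X_c\setminus K$, yields for every degree $n-j\geq q$ and all $k\geq 1$
\begin{equation}
\dim H^{0,n-j}(X,L^{-k}\otimes K_X\otimes E^*)=\dim H^{n-j}(X,(L^{-1})^k\otimes E')\leq Ck^{n-(n-j)}=Ck^{j}.
\end{equation}
Since the cohomology in degree $n-j\geq q$ is finite-dimensional by Andreotti--Grauert, a finite-dimensional space and its dual have equal dimension, so $\dim[H^{0,j}(X,L^k\otimes E)]_0\leq Ck^{j}$; the constraint $n-j\geq q$ is exactly the asserted range $0\leq j\leq n-q$, and $j=0$ gives the constant bound. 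The main obstacle is the careful bookkeeping of the duality: one must confirm that \cite[20.7 Theorem]{HL:88} applies with the correct canonical-bundle twist and complementary degree, and that the groups involved are finite-dimensional and separated on the $q$-convex manifold, which is exactly what renders the pairing perfect and lets one transfer the dimension estimate across the duality.
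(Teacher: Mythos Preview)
Your proposal is correct and follows exactly the route indicated in the paper, which proves the corollary in one line by ``combining \cite[20.7 Theorem]{HL:88} and Theorem \ref{qconvexpos}.'' You have simply spelled out the bookkeeping of that combination: the Serre-type duality $[H^{0,j}(X,L^k\otimes E)]_0\cong H^{0,n-j}(X,L^{-k}\otimes K_X\otimes E^*)^*$, followed by applying Theorem \ref{qconvexpos} to $(L^{-1},K_X\otimes E^*)$ in degree $n-j\geq q$.
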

\begin{proof}
	Combining \cite[20.7 Theorem]{HL:88} and Theorem \ref{qconvexpos}.
\end{proof}

\begin{rem}[Compatibility to the vanishing theorem on $q$-convex manifolds] \label{remqconvex}
	Let $(E,h^E)$ be a holomorphic vector bundle on $X$. If $(L,h^L)>0$ on $X_c$ with $K\subset X_c$ instead of the hypothesis $(L,h^L)\geq 0$ on $X_c\setminus K$ in Theorem \ref{qconvexpos}, then for $j\geq q$ and sufficiently large $k$,
	\begin{equation}
	\dim H^j(X,L^k\otimes E)=0.
	\end{equation}
 In particular, if $X$ is $q$-complete, then $H^p(X,E)=0$ for $p\geq q$ and arbitrary holomorphic vector bundle $E$. In fact, the fundamental estimates hold with $K=\emptyset$ in these cases, thus the space of harmonic forms is trivial, see \cite[Theorem 3.5.9]{MM}.
\end{rem}   	
   
\subsection{Pseudo-convex domains}
\begin{thm}
		Let $M\Subset X$ be a smooth strongly pseudoconvex domain in a complex manifold $X$ of dimension $n$. Let $(L,h^L)$ and $(E,h^E)$ be holomorphic Hermitian line bundles on $X$. Let $(L,h^L)$ be semipositive on $M$. 
	Then there exists $C>0$ such that for any $q\geq 1$ and $k\geq 1$, we have
	\begin{equation}
	\dim H^q(M,L^k\otimes E)\leq Ck^{n-q}.  
	\end{equation}
\end{thm}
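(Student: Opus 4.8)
The plan is to realise the strongly pseudoconvex domain $M$ as a $1$-convex manifold in its own right and then invoke Theorem \ref{T:1c} with the ambient $1$-convex manifold taken to be $M$. Indeed, as noted after the definition of pseudoconvexity, any smooth strongly pseudoconvex domain is $1$-convex, so the real content is already packaged in Theorem \ref{T:1c} (and, through it, in Proposition \ref{1coxFE} and Theorem \ref{L2FE}); the only genuinely new points are to exhibit an exhaustion function witnessing the $1$-convexity of $M$ and to check that the semipositivity hypothesis on $(L,h^L)$ transfers to the sublevel sets used in Theorem \ref{T:1c}.

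First I would produce the exhaustion function. Let $\rho$ be a defining function of $M$, so $M=\{\rho<0\}$ and $d\rho\neq 0$ on $bM$; after replacing $\rho$ by $\tfrac{1}{\lambda}(e^{\lambda\rho}-1)$ for $\lambda\gg 0$ we may assume $\imat\dbar\ddbar\rho>0$ on a neighbourhood of $bM$, using that the Levi form $\cL_\rho$ is positive definite there. Set $\varrho:=-\log(-\rho)\in\cC^\infty(M,\R)$. Since
\[
M_c:=\{x\in M:\varrho(x)<c\}=\{\rho<-e^{-c}\}\Subset M
\]
for every $c\in\R$, the function $\varrho$ is an exhaustion of $M$. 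A direct computation gives
\[
\imat\dbar\ddbar\varrho=\frac{\imat\dbar\ddbar\rho}{-\rho}+\frac{\imat\dbar\rho\wedge\ddbar\rho}{\rho^2},
\]
and on the neighbourhood of $bM$ where $\rho$ is strictly plurisubharmonic both terms are positive: the first because $-\rho>0$ and $\imat\dbar\ddbar\rho>0$, the second because $\imat\dbar\rho\wedge\ddbar\rho\geq 0$. Hence $\imat\dbar\ddbar\varrho$ has $n$ positive eigenvalues outside a compact set $K\subset M$ (one may take $K=\{\rho\leq-\delta\}$ once $\imat\dbar\ddbar\rho>0$ on $\{-\delta<\rho<0\}$), so $M$ is a $1$-convex manifold with exhaustion function $\varrho$ and exceptional set $K$.

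Finally, since $K$ is compact and the sets $M_c$ exhaust $M$, I would fix $c$ large enough that $K\subset M_c$. By hypothesis $(L,h^L)$ is semipositive on all of $M$, hence on the sublevel set $M_c$, so every hypothesis of Theorem \ref{T:1c} is satisfied with the $1$-convex manifold taken to be $M$ and with $M_c$ playing the role of $X_c$. Applying Theorem \ref{T:1c} produces $C>0$ with $\dim H^q(M,L^k\otimes E)\leq Ck^{n-q}$ for all $q\geq1$ and $k\geq1$, which is exactly the assertion; note that the range $k\geq1$ (rather than only large $k$) comes for free from the finiteness built into Theorem \ref{T:1c} via the Andreotti--Grauert finiteness theorem.

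Because all the analysis is delegated to Theorem \ref{T:1c}, the only technical step is the passage from $\cL_\rho$ being positive definite along the complex tangent directions of $bM$ to $\rho$ being strictly plurisubharmonic in a full neighbourhood of $bM$; this is the standard exponential-reparametrisation trick, and it is precisely the place where strong, as opposed to merely weak, pseudoconvexity is used. I expect no further obstacle.
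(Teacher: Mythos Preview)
Your proposal is correct and follows exactly the route the paper takes: the paper's entire proof reads ``Note a strongly pseudoconvex domain is $1$-convex and applying Theorem~\ref{T:1c},'' and you have simply unpacked the standard reason (via $\varrho=-\log(-\rho)$ and the exponential reparametrisation of $\rho$) that strong pseudoconvexity yields $1$-convexity before invoking Theorem~\ref{T:1c}. The extra detail you supply is accurate and not in conflict with anything in the paper.
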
 
\begin{proof}
	Note a strongly pseudocovex domain is $1$-convex and applying Theorem \ref{T:1c}.
\end{proof}
  
\begin{proof}[Proof of Theorem \ref{pseudoconvex}]
	We follow \cite[Theorem 3.5.10]{MM}.
	Let $\omega$ be a Hermitian metric on $X$. 
  Note $L>0$ around $bM$, by the same argument in Lemma \ref{1coxFE} without the modification of $h^L$, we conclude that
  there exist a compact subset $K'\subset M$ and $C_0>0$ such that for sufficiently large $k$,
  \begin{equation}
  \|s\|^2\leq \frac{C_0}{k}(\|\ddbar^E_ks\|^2+\|\ddbar^{E*}_{k,H} s\|^2)+C_0\int_{K'} |s|^2 dv_X
  \end{equation}
  for any $s\in \Dom(\ddbar^E_k)\cap \Dom(\ddbar^{E*}_{k,H})\cap L^2_{0,q}(M,L^k\otimes E)$ and each $1\leq q \leq n$,
  where the $L^2$-norm is given by $\omega$, $h^{L^k}$ and $h^E$ on $M$. Finally, we apply Theorem \ref{L2FE} on $M$.
\end{proof} 
 
\subsection{Weakly $1$-complete manifolds}
\begin{proof}[Proof of Theorem \ref{weakly1complete}]
	We follow \cite[Theorem 3.5.12]{MM}.
	Let $\varphi\in \cC^\infty(X,\R)$ be an exhaustion function of $X$ such that $\imat\dbar\ddbar\varphi\geq 0$ on $X$ and $X_c:=\{\varphi<c\}\Subset X$ for all $c\in \R$. We choose  a regular value $c\in \R$ of $\varphi$ such that $K\subset X_c$ by  Sard's theorem. Thus $X_c$ is a smooth pseudoconvex domain and $L>0$ on a neighbourhood of $bX_c$. We apply Theorem \ref{pseudoconvex}, for any $q\geq 1$ and sufficiently large $k$
	\begin{equation}
	\dim H^{0,q}_{(2)}(X_c,L^k\otimes E)=\dim \cH^{0,q}(X_c,L^k\otimes E)\leq Ck^{n-q}.
	\end{equation}
	Finally, by \cite[Theorem 6.2]{Takegoshi:83} (see \cite[Theorem 3.5.11]{MM}) and Dolbeault isomorphism, it follows that
	$
	H^{q}(X,L^k\otimes E)\cong H^q(X_c,L^k\otimes E)\cong H^{0,q}(X_c,L^k\otimes E)\cong\cH^{0,q}(X_c,L^k\otimes E)
	$ for $q\geq 1$ and sufficiently large $k$.
\end{proof}

\begin{rem}
Marinescu \cite{M:92} positively answered a question of Ohsawa \cite[\S1. Remark 2]{Oh:82} by proving Morse inequalities on weakly $1$-complete manifolds. If $L$ is $q$-positive outside a compact subset $K\subset X_c$, $\dim H^p(X_c,L^k)$ are at most of polynomial growth of degree $n$ with respect to $k$ for $p\geq q$. Theorem \ref{weakly1complete} says that if $L$ is $1$-positive outside a compact subset $K\subset X_c$ and $L\geq 0$ on $X_c$ additionally, then
 $\dim H^p(X_c,L^k)$ are at most of polynomial growth of degree $n-p$ with respect to $k$ for $p\geq 1$.
\end{rem}  
 
Generally we have the following result when $(L,h^L)$ might be not semipositive. 
\begin{cor}\label{weak1cor}
	Let $X$ be a weakly $1$-complete manifold of dimension $n$. Let $(L,h^L)$ and $(E,h^E)$ be holomorphic Hermitian line bundles on $X$. Suppose there exists $f\in \cC^\infty(X,\R)$ such that $\sqrt{-1}R^{(L,h^L)}+\sqrt{-1}\dbar\ddbar f\geq 0$ on $X$, and $\sqrt{-1}R^{(L,h^L)}+\sqrt{-1}\dbar\ddbar f> 0$ on $X\setminus K$ for a compact subset $K\subset X$.  
	Then there exists $C>0$ such that for any $q\geq 1$ and sufficiently large $k$, we have
	\begin{equation}
	\dim H^q(X,L^k\otimes E)\leq Ck^{n-q}.
	\end{equation}
\end{cor}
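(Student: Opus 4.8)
The plan is to deduce the corollary from Theorem \ref{weakly1complete} by a conformal change of the fibre metric on $L$, using only that the groups $H^q(X,L^k\otimes E)$ are the cohomology of the sheaf $\mO_X(L^k\otimes E)$ and hence do not depend on the Hermitian metric chosen on $L$. First I would set
\begin{equation}
\tilde h^L:=h^Le^{-f}.
\end{equation}
Since $f\in\cC^\infty(X,\R)$, the factor $e^{-f}$ is a globally defined, strictly positive smooth function, so $\tilde h^L$ is a genuine smooth Hermitian metric on $L$ over all of $X$ (in particular no partition of unity or local patching is needed).

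Next I would record the effect of this change on the curvature. If $\phi$ is a local weight of $h^L$, i.e.\ $|e_L|^2_{h^L}=e^{-2\phi}$ for a local holomorphic frame $e_L$, then the corresponding weight of $\tilde h^L$ is $\phi+f/2$, and the curvature formula $R^L=\ddbar\dbar\log|e_L|^2_{h^L}=2\dbar\ddbar\phi$ gives at once
\begin{equation}
\sqrt{-1}R^{(L,\tilde h^L)}=\sqrt{-1}R^{(L,h^L)}+\sqrt{-1}\dbar\ddbar f.
\end{equation}
By the two hypotheses on $f$, the right-hand side is $\geq 0$ on all of $X$ and $>0$ on $X\setminus K$. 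Thus $(L,\tilde h^L)$ is semipositive on $X$ and positive on $X\setminus K$ for the given compact subset $K\subset X$, which is exactly the hypothesis of Theorem \ref{weakly1complete}. Applying that theorem to the line bundles $(L,\tilde h^L)$ and $(E,h^E)$ on the weakly $1$-complete manifold $X$ produces $C>0$ such that $\dim H^q(X,L^k\otimes E)\leq Ck^{n-q}$ for every $q\geq 1$ and all sufficiently large $k$.

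Finally I would observe that this bound is already the assertion of the corollary: the auxiliary metric $\tilde h^L$ entered only through the curvature sign conditions required to invoke Theorem \ref{weakly1complete}, whereas the left-hand side $H^q(X,L^k\otimes E)\cong H^{0,q}(X,L^k\otimes E)$ is the cohomology of $\mO_X(L^k\otimes E)$ and is independent of the metric on $L$. I do not expect any genuine obstacle here; the construction is a one-line reduction, and the only two things to verify are the elementary transformation law \eqref{}{} for the curvature under the conformal change $h^L\mapsto h^Le^{-f}$ and the standard metric-independence of sheaf cohomology. The substantive analytic content—the fundamental estimate on a pseudoconvex sublevel set and the passage from $X_c$ to $X$ via Takegoshi's theorem—is entirely absorbed into Theorem \ref{weakly1complete}.
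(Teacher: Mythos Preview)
Your proposal is correct and follows exactly the same approach as the paper: the paper's proof is the single line ``Apply Theorem \ref{weakly1complete} for the line bundle $(L,h^Le^{-f})$,'' and you have simply written out the routine verification of the curvature transformation and the metric-independence of sheaf cohomology that this one-line reduction implicitly uses.
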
 
\begin{proof}
	Apply theorem \ref{weakly1complete} for the line bundle $(L,h^Le^{-f})$.
\end{proof} 
Note that, by definition, there always exists a smooth function $f$ 
on a weakly $1$-convex manifold $X$ such that 
$\sqrt{-1}\dbar\ddbar f\geq 0$, 
thus Corollary \ref{weak1cor} implies the estimate 
may still hold when $(L,h^L)$ is not semipositive everywhere.	
 
 \begin{rem}(Compatibility to Nakano vanishing theorem on weakly $1$-convex manifolds \cite[Theorem 3.5.15]{MM}) If $(L,h^L)>0$ on $X$ instead of $(L,h^L)\geq 0$ in Theorem \ref{weakly1complete}, then for $q\geq 1$ and sufficiently large $k$, $H^q(X,L^k\otimes E)=0$, which implies the vanishing theorem on $1$-convex (in particular, compact) manifold in Remark \ref{remqconvex}.
 \end{rem}
 
\subsection{Complete manifolds}
  
\begin{proof}[Proof of Theorem \ref{complete}]  
We follow \cite[Theorem 3.3.5]{MM}.	 
Since $(X,\omega)$ is complete, $\ddbar^{E*}_{k,H}=\ddbar^{E*}_k$, that is the Hilbert space adjoint and the maximal extension of the formal adjoint of $\ddbar^E_k$ coincide. Let $ \Lambda=i(\omega)$ be the adjoint of the operator $\omega\wedge\cdot$ with respect to the Hermitian inner product induced by $\omega$, $h^L$ and $h^E$. In a local orthonormal frame $\{ \omega_j \}_{j=1}^n$ of $T^{(1,0)}X$ with dual frame $\{ w^j\}_{j=1}^n$ of $T^{(1,0)*}X$, $\omega=\sqrt{-1}\sum_{j=1}^n \omega^j\wedge\ov\omega^j$ and $\Lambda=-\sqrt{-1}i_{\ov w_j}i_{w_j}$. Thus $\sqrt{-1}R^{(L,h^L)}=\sqrt{-1}\sum_{j=1}^n \omega^j\wedge\ov\omega^j$ outside $K$.

Let $F:=E\otimes K_X^*$ with Hermitian metric $h^F$ induced by $h^E$ and $\omega$. Let $\{e^F_k\}$ be a local frame of $L^k\otimes F$. For $s\in \Omega^{0,q}_0(X\setminus K,L^k\otimes E)=\Omega^{n,q}_0(X\setminus K,L^k\otimes F)$, we can write $s=\sum_{|J|=q} s_J\omega^1\wedge\cdots\wedge\omega^n\wedge\ov\omega^J\otimes e_k^F$ locally, thus
\begin{equation}
[\sqrt{-1}R^L,\Lambda]s
=\sum_{|J|=q}(q s_J\omega^1\wedge\cdots\wedge\omega^n\wedge\ov\omega^J)\otimes e^F_k 
= qs.
\end{equation}
Since $(X\setminus K, \sqrt{-1}R^{(L,h^L)})$ is K\"{a}hler, we apply Nakano's inequality \cite[(1.4.52)]{MM}, 
\begin{equation}
\|\ddbar_k^E s\|^2+\|\ddbar^{E*}_k s\|^2=
	\langle \square^{L^k\otimes E}s,s \rangle 
	\geq k \langle  [\imat R^L,\Lambda]s,s \rangle+[\imat R^E,\Lambda]s,s \rangle.
\end{equation}
Thus there exists $C_E>0$ such that, for any $1\leq q \leq n$,
\begin{equation}
\|\ddbar_k^E s\|^2+\|\ddbar^{E*}_k s\|^2\geq ( qk-C_E)\|s\|^2\geq (k-C_E)\|s\|^2.
\end{equation} 
Therefore, we have
\begin{equation}
  \|s\|^2\leq \frac{2}{k}( \|\ddbar^E_k s\|^2+\|\ddbar^{E*}_k s\|^2 ),
\end{equation}
for $s\in \Omega^{0,q}_0(X\setminus K,L^k\otimes E)$ with $1\leq q\leq n$ and $k\geq 2 C_E$.

Next we follow the analogue argument in Proposition \ref{1coxFE} to obtain the fundamental estimates as follows. Let $V$ and $U$ be open subsets of $X$ such that $K\subset V\Subset U\Subset X$. We choose a function $\xi\in \cC^\infty_0(U,\R)$ such that $0\leq \xi\leq 1$ and $\xi\equiv 1$ on $\ov V$. We set $\phi:=1-\xi$, thus $\phi\in \cC^\infty(X,\R)$ satisfying $0\leq \phi \leq 1$ and $\phi \equiv 0$ on $\ov V$. 
		
	Now let $s\in \Omega_0^{0,q}(X,L^k\otimes E)$, thus $\phi s\in \Omega^{0,q}_0(X\setminus K, L^k\otimes E)$. We set $K':=\ov U$, then 
	\begin{equation}
	\|\phi s\|^2\geq \|s\|^2-\int_{K'}|s|^2dv_X,
	\end{equation}
and similarly there exists a constant $C_1>0$ such that	
\begin{equation}
\frac{1}{k}(\|\ddbar^E_k (\phi s)\|^2+\|\ddbar^{E*}_k(\phi s)\|^2)\leq \frac{5}{k}(\|\ddbar^E_k s\|^2+\|\ddbar^{E*}_k s\|^2)+\frac{12C_1}{k}\|s\|^2.
\end{equation}	

By combining the above three inequalities, there exists $C_0>0$ such that for any $s\in \Omega^{0,q}_0(X,L^k\otimes E)$ with $1\leq q\leq n$ and $k$ large enough
\begin{equation}   
\|s\|^2\leq \frac{C_0}{k}(\|\ddbar^E_ks\|^2+\|\ddbar^{E*}_ks\|^2)+C_0\int_{K'} |s|^2 dv_X.
\end{equation}	
Finally, since $\Omega_0^{0,\bullet}(X,L^k\otimes E)$ is dense in $\Dom(\ddbar_k^E)\cap \Dom(\ddbar_k^{E*})$ in the graph-norm, for each $1\leq q\leq n$ the fundamental estimate holds in bidegree $(0,q)$ for forms with values in $L^k\otimes E$ for $k$ large.
So the conclusion follows from Theorem \ref{L2FE}.
\end{proof}

\subsection{Compact manifolds and coverings revisited}
\begin{thm}[\cite{BB:02}]\label{cmptthm}
	Let $X$ be a compact complex manifold of dimension $n$. Let $(L,h^L)$ and $(E,h^E)$ be holomorphic Hermitian line bundles on $X$. Assume $(L,h^L)$ is semipositive on $X$. Then there exists $C>0$ such that for any 
	$q\geq 1$ and  $k\geq 1$ we have
	\begin{equation}
	\dim H^{0,q}(X, L^k\otimes E)
	\leq C k^{n-q}.
	\end{equation}
\end{thm}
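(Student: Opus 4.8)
The plan is to treat the compact case as the degenerate instance of the general framework, in which the manifold is its own exceptional set. First I would fix an arbitrary Hermitian metric $\om$ on $X$, with volume form $dv_X=\om^n/n!$, and endow $L^k\otimes E$ with the induced $L^2$ inner product. Because $X$ is compact and all the data are smooth, every $L^2$-form is smooth, $\ddbar^E_k$ already has closed range, and the reduced and non-reduced $L^2$-cohomology coincide with the Dolbeault cohomology; combined with the Hodge isomorphism \eqref{eq47} this gives $H^{0,q}(X,L^k\otimes E)\cong\cH^{0,q}(X,L^k\otimes E)$, so it suffices to bound the dimension of the harmonic space.

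The core of the argument is a direct application of Theorem \ref{Localmain} with the compact set taken to be $K=X$. Since $(L,h^L)$ is semipositive on all of $X$, it is in particular semipositive on a neighbourhood of the compact set $X$, so Theorem \ref{Localmain} supplies a constant $C>0$, depending only on $\om$, $(L,h^L)$ and $(E,h^E)$, with $B^q_k(x)\leq Ck^{n-q}$ for every $x\in X$, every $k\geq1$ and every $q\geq1$. Writing $\{s^k_j\}$ for an orthonormal basis of $\cH^{0,q}(X,L^k\otimes E)$ and integrating the Bergman density function over the whole of $X$, I obtain
\begin{equation*}
\dim\cH^{0,q}(X,L^k\otimes E)=\sum_{j}\|s^k_j\|^2=\int_X B^q_k(x)\,dv_X\leq C\,\vol(X)\,k^{n-q},
\end{equation*}
which already yields the assertion for all $k\geq1$ and all $q\geq1$, with no largeness hypothesis on $k$.

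Equivalently, one may invoke Theorem \ref{L2FE}: with $K=X$ the concentration condition in bidegree $(0,q)$ holds trivially with constant $C_0=1$, since $\|s\|^2=\int_X|s|^2\,dv_X=\int_K|s|^2\,dv_X$, and semipositivity near $K=X$ is exactly the stated hypothesis, so the theorem delivers the same bound. I do not expect any genuine obstacle in the compact case itself: the only point worth isolating is that compactness collapses both the concentration condition and the fundamental estimate to trivialities, by allowing $X$ to serve as its own exceptional set. All of the analytic weight has been transferred to Theorem \ref{Localmain}, the pointwise $O(k^{n-q})$ bound for the Bergman density of harmonic $(0,q)$-forms on the semipositive locus, which is the form-valued refinement of Berndtsson's estimate \cite{BB:02}; the present statement is simply its integral over $X$.
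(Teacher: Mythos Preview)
Your proposal is correct and is exactly the approach the paper intends: the paper does not give a separate proof but simply records that Theorem~\ref{cmptthm} is a special case of Theorem~\ref{L2FE} (and of the other main theorems), and your argument is the explicit unpacking of that specialization with $K=X$ and $C_0=1$. Your observation that the bound holds for all $k\geq1$, rather than only for large $k$, is also correct, since both the trivial concentration condition and Theorem~\ref{Localmain} are valid for every $k\geq1$.
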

Note that this theorem is a special case of Theorem \ref{L2FE}, \ref{complete}, \ref{qconvexpos}, \ref{T:1c}, \ref{weakly1complete}, \ref{singlinebundle}
 and the following Theorem \ref{coveringthm}.

\begin{cor}\label{cmpcorneg}
	Let $X$ be a compact manifold of dimension $n$, and let $(L,h^L), (E,h^E)$ be holomorphic Hermitian line bundles on $X$. Let $(L,h^L)$ be seminegative on $X$.
	Then there exists $C>0$ such that $0\leq q\leq n-1$,
	\begin{equation}
	\dim H^{0,q}(X,L^k\otimes E)\leq Ck^{q}.
	\end{equation}
	In particular, 
	$\dim H^{0,0}(X,L^k\otimes E)\leq C$.
\end{cor}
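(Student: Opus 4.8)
The plan is to reduce the seminegative case to the semipositive case already settled in Theorem \ref{cmptthm} (Berndtsson) by means of Serre duality, exactly in parallel with the way the two preceding corollaries on $q$-convex manifolds were deduced from the duality formula of \cite{HL:88}. First I would invoke Serre duality on the compact complex manifold $X$ of dimension $n$: for any holomorphic Hermitian line bundle $F$ there is a canonical isomorphism
\begin{equation}
H^{0,q}(X,F)\cong H^{0,n-q}(X,F^*\otimes K_X)^*,
\end{equation}
where $K_X=\Lambda^n T^{(1,0)*}X$ is the canonical bundle. Applying this with $F=L^k\otimes E$ gives
\begin{equation}
H^{0,q}(X,L^k\otimes E)\cong H^{0,n-q}(X,L^{-k}\otimes E^*\otimes K_X)^*.
\end{equation}

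Next I would package the dual data so that Theorem \ref{cmptthm} applies. Set $\tilde L:=L^*$ equipped with the dual metric $(h^L)^{-1}$, and $\tilde E:=E^*\otimes K_X$ with its induced Hermitian metric; then $\tilde L^k\otimes\tilde E=L^{-k}\otimes E^*\otimes K_X$. Since $(L,h^L)$ is seminegative, i.e.\ $c_1(L,h^L)\leq 0$, the dual bundle satisfies $c_1(\tilde L,(h^L)^{-1})=-c_1(L,h^L)\geq 0$, so $(\tilde L,(h^L)^{-1})$ is semipositive on $X$. Because $X$ is compact all these cohomology spaces are finite-dimensional, and passing to a dual space preserves dimension, so
\begin{equation}
\dim H^{0,q}(X,L^k\otimes E)=\dim H^{0,n-q}(X,\tilde L^k\otimes\tilde E).
\end{equation}

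Finally, for $0\leq q\leq n-1$ one has $n-q\geq 1$, so Theorem \ref{cmptthm} applied to the semipositive bundle $(\tilde L,(h^L)^{-1})$ together with the line bundle $\tilde E$ furnishes a constant $C>0$ with
\begin{equation}
\dim H^{0,n-q}(X,\tilde L^k\otimes\tilde E)\leq C k^{n-(n-q)}=Ck^q
\end{equation}
for all $k\geq 1$, which is the asserted estimate; taking $q=0$ gives the ``in particular'' bound $\dim H^{0,0}(X,L^k\otimes E)\leq C$. I do not anticipate any genuine obstacle, as the argument is purely formal once Serre duality and Theorem \ref{cmptthm} are available. The only point deserving care is the bookkeeping of the degree shift $q\mapsto n-q$: one must check that it sends the exponent to $n-(n-q)=q$ and that the admissible range $q\leq n-1$ of the corollary is precisely what guarantees $n-q\geq 1$, so that the hypothesis $q'\geq 1$ of Theorem \ref{cmptthm} is met.
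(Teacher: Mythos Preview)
Your argument is correct and is precisely the paper's own proof, which reads in full: ``It follows from Serre duality and Theorem~\ref{cmptthm}.'' You have simply spelled out the bookkeeping of the duality $H^{0,q}(X,L^k\otimes E)\cong H^{0,n-q}(X,(L^*)^k\otimes E^*\otimes K_X)^*$ and the degree shift $q\mapsto n-q$ that the paper leaves implicit.
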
	
\begin{proof}
	It follows from Serre duality and Theorem\ref{cmptthm}.
\end{proof}

For the case of nef line bundles, the following observation refines the estimates in Theorem\ref{cmptthm} as well as Corollary \ref{cmpcorneg}, and reflects that the magnitude $k^{n-q}$ are precise. 
 
\begin{lemma}[\cite{DPS:94}]\label{DPSlem}
Let $L$ be a nef holomorphic line bundle on a compact complex manifold $X$. Then every non-trivial section in $H^{0}(X,L^{-1})$ has no zero at all.	
\end{lemma}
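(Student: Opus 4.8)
The plan is to exploit the analytic characterization of nefness: since $L$ is nef, for every $\epsilon>0$ there is a smooth Hermitian metric $h_\epsilon$ on $L$ whose Chern form satisfies $c_1(L,h_\epsilon)\ge -\epsilon\om$ for a fixed background Hermitian metric $\om$ on $X$. Let $g_\epsilon$ be the induced dual metric on $L^{-1}$, so that $c_1(L^{-1},g_\epsilon)=-c_1(L,h_\epsilon)\le \epsilon\om$. Given a nonzero $\sigma\in H^{0}(X,L^{-1})$ with zero divisor $Z_\sigma\ge 0$, the idea is to test the Poincar\'e--Lelong identity against a suitable $(n-1,n-1)$-form so that the exact ($\dbar\ddbar$-potential) term drops out, leaving an inequality that forces the Gauduchon volume of $Z_\sigma$ to vanish.

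Concretely, first I would record the Poincar\'e--Lelong formula for the holomorphic section $\sigma$ of $(L^{-1},g_\epsilon)$, namely
\[
\frac{\imat}{2\pi}\dbar\ddbar\log|\sigma|^2_{g_\epsilon}=[Z_\sigma]-c_1(L^{-1},g_\epsilon)=[Z_\sigma]+c_1(L,h_\epsilon),
\]
where $u_\epsilon:=\log|\sigma|^2_{g_\epsilon}$ is a quasi-plurisubharmonic (hence $L^1$) function on $X$. Since $X$ is a compact complex manifold, Gauduchon's theorem provides a Hermitian metric $\om_G$ with $\dbar\ddbar(\om_G^{n-1})=0$. Pairing the identity with the positive $(n-1,n-1)$-form $\om_G^{n-1}$ and integrating by parts kills the left-hand side, because $\int_X \frac{\imat}{2\pi}\dbar\ddbar u_\epsilon\wedge\om_G^{n-1}=\int_X \frac{\imat}{2\pi}u_\epsilon\,\dbar\ddbar(\om_G^{n-1})=0$. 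Hence
\[
0\le \int_X[Z_\sigma]\wedge\om_G^{n-1}=-\int_X c_1(L,h_\epsilon)\wedge\om_G^{n-1}\le \epsilon\int_X \om\wedge\om_G^{n-1},
\]
using that $-c_1(L,h_\epsilon)\le \epsilon\om$ and that wedging a nonnegative $(1,1)$-form with the positive $(n-1,n-1)$-form $\om_G^{n-1}$ and integrating preserves the inequality, by the standard positivity of $(p,p)$-forms recalled in the preliminaries following \cite[Ch.\,III]{Dem}.

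Letting $\epsilon\to 0$ then gives $\int_X[Z_\sigma]\wedge\om_G^{n-1}=\int_{Z_\sigma}\om_G^{n-1}=0$. Since $\om_G^{n-1}$ is a strictly positive $(n-1,n-1)$-form, this integral is the nonnegative Gauduchon volume of the divisor $Z_\sigma$, and it vanishes precisely when $Z_\sigma=\emptyset$; as the zero set of a section is either empty or a pure codimension-one analytic set of positive volume, I conclude $Z_\sigma=\emptyset$, i.e.\ $\sigma$ is nowhere vanishing. Equivalently, one observes that the Gauduchon degree $\int_X c_1(L,h_\epsilon)\wedge\om_G^{n-1}$ is independent of $\epsilon$, so nefness forces it to be $\ge 0$ while effectivity of $L^{-1}$ forces it to be $\le 0$, whence it is $0$ and $Z_\sigma$ is trivial. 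I expect the main obstacle to be the careful justification of the integration by parts for the merely $L^1$ potential $u_\epsilon$ together with the appeal to Gauduchon's theorem in the possibly non-K\"ahler setting; if $X$ happens to be K\"ahler one may simply take $\om_G=\om$ closed and the argument simplifies, but in general the Gauduchon metric is exactly what makes the potential term integrate to zero.
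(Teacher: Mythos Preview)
The paper does not supply its own proof of this lemma; it is simply quoted from \cite{DPS:94}. Your argument is correct and is precisely the original proof of Demailly--Peternell--Schneider: pair the Poincar\'e--Lelong equation for $\sigma$ with $\om_G^{n-1}$ for a Gauduchon metric $\om_G$, so that the $\dbar\ddbar$-exact term disappears and the near-semipositivity of $c_1(L,h_\epsilon)$ forces the Gauduchon mass of $Z_\sigma$ to vanish. Your caveat about the integration by parts is not a real obstacle: the pairing $\langle\frac{\imat}{2\pi}\dbar\ddbar u_\epsilon,\om_G^{n-1}\rangle=\langle u_\epsilon,\frac{\imat}{2\pi}\dbar\ddbar(\om_G^{n-1})\rangle=0$ is the very definition of $\dbar\ddbar$ acting on the current $u_\epsilon\in L^1(X)$, so no regularization is needed.
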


\begin{cor}\label{cmpcorfinit}
		Let $X$ be a compact manifold of dimension $n$, and let $L$ be holomorphic Hermitian line bundles on $X$. Let $L$ be nef.
	Then, for any $k\in \N$,
	\begin{equation}
	 0\leq \dim H^{0,n}(X,L^k\otimes K_X)=\dim H^{0,0}(X,L^{-k})\leq 1. 
	\end{equation} 
\end{cor}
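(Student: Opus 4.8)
The plan is to first convert the two dimensions into one another by Serre duality, and then to bound the resulting space of sections using the nowhere-vanishing property supplied by Lemma~\ref{DPSlem}. The lower bound $0\leq \dim H^{0,n}(X,L^k\otimes K_X)$ is automatic.

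First I would invoke Serre duality on the compact complex manifold $X$ of dimension $n$: for a holomorphic line bundle $E$ there is a canonical isomorphism $H^{0,n}(X,E)^*\cong H^{0,0}(X,E^*\otimes K_X)$. Taking $E=L^k\otimes K_X$ gives $E^*\otimes K_X=L^{-k}\otimes K_X^*\otimes K_X=L^{-k}$, so that $H^{0,n}(X,L^k\otimes K_X)^*\cong H^{0,0}(X,L^{-k})$. Since both cohomology spaces are finite-dimensional on the compact manifold $X$, passing to dimensions yields the asserted equality $\dim H^{0,n}(X,L^k\otimes K_X)=\dim H^{0,0}(X,L^{-k})$.

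Next I would bound $\dim H^{0,0}(X,L^{-k})=\dim H^0(X,L^{-k})$. As $L$ is nef, every tensor power $L^k$ is again nef (the defining curvature lower bound scales by $k$), so Lemma~\ref{DPSlem} applies with $L$ replaced by $L^k$: every non-trivial section of $(L^k)^{-1}=L^{-k}$ is nowhere vanishing on $X$. Consequently, if $s_1,s_2\in H^0(X,L^{-k})$ are both non-trivial, then $s_2$ has no zeros and the quotient $s_1/s_2$ is a globally defined holomorphic function on the compact connected manifold $X$; by the maximum principle it is a constant $c$, whence $s_1=c\,s_2$. Thus any two non-trivial sections are proportional and $\dim H^0(X,L^{-k})\leq 1$, completing the proof.

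I expect the only delicate point to be the passage from ``nowhere vanishing'' to ``proportional'': it relies on $X$ being connected so that the globally holomorphic ratio $s_1/s_2$ is forced to be constant. The remaining ingredients---the nef-ness of $L^k$ and the bookkeeping in Serre duality---are routine.
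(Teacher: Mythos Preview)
Your proof is correct and follows essentially the same route as the paper: Serre duality for the equality, then Lemma~\ref{DPSlem} (applied to the nef power $L^k$) to obtain a nowhere-vanishing section, and finally the observation that the ratio of any section with this one is a global holomorphic function on compact connected $X$, hence constant. The paper's version phrases the last step as $s\otimes s_0^{-1}\in\cO(X)=\C$ but this is the same argument; your remark that connectedness is implicitly used is a fair point that the paper does not make explicit.
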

\begin{proof}
	Suppose there exists $k\geq 1$ such that 
	$H^0(X,L^{-k})\neq 0$, then there exists $s_0\in H^0(X,L^{-k})$ 
	such that $s_0(x)\neq 0$, $\forall x\in X$ by Lemma \ref{DPSlem}. 
	Let $s\in H^0(X,L^{-k})$. It follows that $s\otimes s_0^{-1}\in \cO(X)=\C$, 
	thus $H^0(X,L^{-k})=\C s_0$. And the case of $k=0$ is trivial.	
\end{proof} 

\begin{rem} 
	Let $X$ be a compact complex manifold of dimension $n$ 
	and $(L,h^L)$ a holomorphic Hermitian line bundle on $X$. 
	Let $M:=\{ v\in L^*: |v|_{h^{L^*}}=1 \}$. 
	It is known that the $\ddbar_b$ (Kohn-Rossi) cohomology 
	$H^q_{b,k}(M)\cong H^q(X,L^k)$, see 
	\cite[Section 1.5]{MM:06}, \cite[(2.8)]{HsiaoLi:16}. 
	Thus if $L\geq 0$ on $X$, $\dim H^q_{b,k}(M)\leq Ck^{n-q}$ for all $q\geq 1$ and $k\geq 1$.
\end{rem}
 
The study of $L^2$ cohomology spaces on coverings of compact manifolds
has also interesting applications, cf.\ \cite{GHS:98, Ko:95}.
The results are similar to the case of compact manifolds, but
we have to use the reduced $L^2$ cohomology groups and von Neumann dimension 
instead of the usual dimension, see \cite[Theorem 1.2]{Wh:16} or Theorem \ref{coveringthm}. For example, in the situation of Theorem \ref{coveringthm}, if the line bundle $(L,h^L)$
is positive, the Andreotti-Vesentini vanishing theorem \cite{AV:65} 
shows that ${\overline{H}}^{0,q}_{(2)}(X, L^k\otimes E)\cong\cH^{0,q}(X, L^k\otimes E)=0$ 
for $q\geq1$ and $k$ large enough. The holomorphic Morse inequalities of Demailly \cite{Dem:85} were generalized to coverings
by Chiose-Marinescu-Todor \cite{MTC:02, TCM:01} (cf.\ also \cite[(3.6.24)]{MM}) and yield in the conditions
of Theorem \ref{coveringthm} that $\dim_{\Gamma} {\overline{H}}^{0,q}_{(2)}(X,L^k\otimes E)=o(k^n)$ 
as $k\to\infty$ for $q\geq1$. Hence Theorem \ref{coveringthm} generalizes \cite{BB:02} 
to covering manifolds and refines the estimates obtained in \cite{MTC:02, TCM:01}. 

\begin{thm}[\cite{Wh:16}]\label{coveringthm}
	Let $(X,\omega)$ be a $\Gamma$-covering manifold of dimension $n$.
	Let $(L,h^L)$ and $(E,h^E)$ be two $\Gamma$-invariant 
	holomorphic Hermitian line bundles on $X$. Assume $(L,h^L)$ is semipositive on $X$.
	Then there exists $C>0$ such that for any 
	$q\geq 1$ and  $k\geq 1$ we have
	\begin{equation}
	\dim_{\Gamma}{\overline{H}}^{0,q}_{(2)}(X, L^k\otimes E)=\dim_{\Gamma}\cH^{0,q}(X, L^k\otimes E)
	\leq C k^{n-q}.
	\end{equation}
\end{thm}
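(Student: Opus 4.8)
The plan is to reduce Theorem \ref{coveringthm} to the combination of the pointwise Bergman kernel estimate (Theorem \ref{Localmain}) and the $\Gamma$-equivariant analogue of the concentration argument already used in the proof of Theorem \ref{L2FE}. First I would recall that on a $\Gamma$-covering manifold the relevant trace is the von Neumann dimension $\dim_\Gamma$, computed via the $\Gamma$-trace of the orthogonal projection $P^q_k$ onto $\cH^{0,q}(X,L^k\otimes E)$. Concretely, if $\mathcal{F}$ is a fundamental domain for the $\Gamma$-action, then
\begin{equation}
\dim_\Gamma \cH^{0,q}(X,L^k\otimes E)=\tr_\Gamma P^q_k=\int_{\mathcal{F}} B^q_k(x)\,dv_X(x),
\end{equation}
where $B^q_k$ is the Bergman density function \eqref{e:Bergfcn} of the harmonic space, now formed from a $\Gamma$-equivariant orthogonal basis (equivalently, the pointwise trace of the Schwartz kernel of $P^q_k$ on the diagonal). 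The key structural point is that because $\omega$, $h^L$ and $h^E$ are $\Gamma$-invariant, the density $B^q_k(x)$ is a $\Gamma$-invariant function on $X$, so it descends to the compact quotient $X/\Gamma$, and the integral over $\mathcal{F}$ equals the integral over $X/\Gamma$ of a bounded function.

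Next I would invoke Theorem \ref{Localmain}. Since $X/\Gamma$ is compact, $(L,h^L)$ is semipositive on all of $X$, and everything is $\Gamma$-invariant, the compact set $K$ in Theorem \ref{Localmain} may be taken to be (the preimage of) a fundamental domain's closure, or more cleanly one applies the pointwise estimate uniformly: there exists $C>0$, depending only on the $\Gamma$-invariant data and hence descending to $X/\Gamma$, such that $B^q_k(x)\le C k^{n-q}$ for every $x\in X$, every $k\ge 1$ and $q\ge 1$. Combining this with the displayed identity gives
\begin{equation}
\dim_\Gamma \cH^{0,q}(X,L^k\otimes E)=\int_{\mathcal{F}} B^q_k(x)\,dv_X(x)\le C\,\vol(\mathcal{F})\,k^{n-q}=C\,\vol(X/\Gamma)\,k^{n-q},
\end{equation}
which is the desired bound after renaming the constant. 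The equality $\dim_\Gamma\overline H^{0,q}_{(2)}=\dim_\Gamma\cH^{0,q}$ follows from the $\Gamma$-equivariant weak Hodge decomposition, exactly as in \eqref{eq47} but with the von Neumann dimension replacing the ordinary dimension; this is standard $\Gamma$-index theory (Atiyah's $L^2$ formalism).

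The main obstacle, and the only step requiring genuine care, is justifying the fundamental-domain integral formula for $\dim_\Gamma$ together with the fact that $B^q_k$ is well-defined and $\Gamma$-invariant in the covering setting: one must check that $P^q_k$ is a $\Gamma$-equivariant operator with a smooth Schwartz kernel whose diagonal restriction is integrable over $\mathcal{F}$, so that $\tr_\Gamma P^q_k$ is finite and equals $\int_{\mathcal{F}}B^q_k\,dv_X$. This regularity and traceability is ensured by elliptic regularity for the Gaffney extension together with the $\Gamma$-invariance of the metrics, and the adaptation of \cite[Lemma 3.1]{CM} to the form case referenced after \eqref{e:Bergfcn}. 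Once this identification is in place, the proof is immediate from Theorem \ref{Localmain}; unlike the proof of Theorem \ref{L2FE}, no concentration hypothesis is needed here because the quotient is compact, so the ``exceptional set'' is effectively all of the (compact) base and the global $L^2$-norm is already controlled by a single fundamental-domain integral.
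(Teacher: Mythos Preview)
Your proposal is correct and follows essentially the same approach as the paper: the paper refers to \cite[Theorem 1.2]{Wh:16} for the complete proof and highlights exactly the identity $\dim_{\Gamma}\cH^{0,q}(X,L^k\otimes E)=\int_{U}B^q_k(x)\,dv_X$ over a fundamental domain $U\Subset X$, which is then combined with the pointwise bound of Theorem \ref{Localmain}. Your discussion of the $\Gamma$-invariance of $B^q_k$, the smoothness of the Schwartz kernel of $P^q_k$, and the weak Hodge isomorphism for the von Neumann dimension fills in precisely the details the paper leaves implicit.
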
   
See \cite[Theorem 1.2]{Wh:16} for the complete proof. As a remark, note that for a fundamental domain $U\Subset X$ with respect to $\Gamma$, we used
	\begin{equation}
	\dim_{\Gamma}\cH^{0,q}(X,L^k\otimes E) 
	=\int_{U}B^q_k(x) d v_{X},
	\end{equation} 
	which is similar to the formula
	\begin{equation}
	\dim \cH^{0,q}(X,L^k\otimes E)\leq C_0\int_K B^q_k(x)dv_X
	\end{equation}
	 used in the proof of Theorem \ref{L2FE}. 
 
 The following estimate was firstly obtained by Morse inequalities for covering manifolds \cite{MTC:02,TCM:01}. We do not assume the holomorphic Morse inequalities for coverings in our proof.
\begin{cor}\label{coveringpos} Under the hypothesis of Theorem \ref{coveringthm},
		if $(L,h^L)$ is positive at least at one point additionally, then there exists $C_1>0$ and $C_2>0$ such that for $k$ large enough
		\begin{equation}
		C_1k^n \leq \dim_\Gamma H^{0}(X, L^k\otimes E)\leq C_2k^n.
		\end{equation}
\end{cor}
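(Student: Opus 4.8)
The plan is to reduce both bounds to the $\Gamma$-trace formula
$\dim_{\Gamma}\cH^{0,0}(X,L^k\otimes E)=\int_U B^0_k(x)\,dv_X$ recorded after Theorem \ref{coveringthm}, now read in bidegree $q=0$, and then to control the Bergman density function $B^0_k$ from above and below. Here $U\Subset X$ is a fundamental domain for $\Gamma$. First I would note that $\cH^{0,0}(X,L^k\otimes E)$ is precisely the space of $L^2$-holomorphic sections of $L^k\otimes E$: for a $(0,0)$-form the adjoint $\ddbar^{E*}_k$ vanishes identically, so $\square^{L^k\otimes E}s=0$ is equivalent to $\ddbar^E_k s=0$. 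Combined with the isomorphism \eqref{eq47}, this gives $\dim_{\Gamma}H^{0}(X,L^k\otimes E)=\dim_{\Gamma}\overline{H}^{0,0}_{(2)}(X,L^k\otimes E)=\int_U B^0_k\,dv_X$, which is the object to be estimated.

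For the upper bound I would invoke the bidegree-zero analogue of Theorem \ref{Localmain}: there is $C>0$, depending only on $\omega$, $(L,h^L)$, $(E,h^E)$ and $\overline U$, such that $B^0_k(x)\le Ck^n$ for all $x\in\overline U$ and all $k\ge 1$. This requires no positivity; it follows from the sub-mean-value inequality for holomorphic sections after rescaling by $\sqrt{k}$, and the $\Gamma$-invariance of all the data makes it uniform over the compact quotient. Integrating over $U$ yields $\dim_{\Gamma}H^{0}(X,L^k\otimes E)\le C\,\vol(U)\,k^n=:C_2k^n$.

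The lower bound is the substantive part. Since $X$ carries a $\Gamma$-invariant metric with compact quotient it is a complete Hermitian manifold, and $(L,h^L)$ is semipositive on $X$, so the Bergman kernel expansion of \cite[Theorem 1.7]{HM:14} applies on the open set $P:=\{x\in X:c_1(L,h^L)_x>0\}$: there $k^{-n}B^0_k(x)\to \det\!\big(\dot{R}^L_x/2\pi\big)=:b(x)>0$ as $k\to\infty$ (the twist by the line bundle $E$ leaves the positive leading coefficient positive). By hypothesis $P$ is nonempty, and it is $\Gamma$-invariant because $h^L$ is; hence $P\cap U$ has positive volume, for otherwise $\vol(P)=\sum_{\gamma\in\Gamma}\vol\big(\gamma(P\cap U)\big)=0$, contradicting that $P$ is a nonempty open set. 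Applying Fatou's lemma to the nonnegative functions $k^{-n}B^0_k$ then gives
\begin{equation*}
\liminf_{k\to\infty}k^{-n}\dim_{\Gamma}H^{0}(X,L^k\otimes E)=\liminf_{k\to\infty}\int_U k^{-n}B^0_k\,dv_X\ge\int_{U\cap P}b(x)\,dv_X>0,
\end{equation*}
so that $\dim_{\Gamma}H^{0}(X,L^k\otimes E)\ge C_1k^n$ for some $C_1>0$ and all sufficiently large $k$.

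The main obstacle is exactly the pointwise lower bound $\liminf_k k^{-n}B^0_k>0$ on $P$, i.e.\ the Bergman kernel asymptotics for a merely semipositive line bundle restricted to its positivity locus; this is where the hypothesis of positivity at one point is used, and it is the analytic content imported from \cite{HM:14} (whose peak-section and Hörmander estimates play the role that the holomorphic Morse inequalities for coverings would otherwise play, and which we are avoiding). Once this pointwise convergence is granted, the measure-theoretic step $\vol(P\cap U)>0$ and the Fatou argument are routine, and together with the uniform $O(k^n)$ upper bound they complete the proof.
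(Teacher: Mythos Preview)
Your argument is correct, but it takes a genuinely different route from the paper. The paper's proof is a two-line combination of Atiyah's $L^2$ Riemann--Roch--Hirzebruch formula for coverings,
\[
\sum_{q=0}^{n}(-1)^{q}\dim_{\Gamma}\overline{H}^{0,q}_{(2)}(X,L^k\otimes E)=\frac{k^n}{n!}\int_{X/\Gamma}c_1(L/\Gamma,h^{L/\Gamma})^n+o(k^n),
\]
with Theorem~\ref{coveringthm}, which forces every term with $q\geq1$ to be $O(k^{n-q})=o(k^n)$; since $c_1(L,h^L)\geq0$ and is strictly positive on an open set, the integral on the right is a positive constant and both inequalities follow at once, with the exact leading coefficient as a bonus. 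Your approach instead integrates the Bergman density over a fundamental domain, bounding $B^0_k$ pointwise from above by the local $O(k^n)$ estimate and from below on the positivity locus via the Bergman kernel asymptotics of~\cite{HM:14}, then pushing the pointwise lower bound through Fatou. This is a legitimate alternative, and it stays closer to the Bergman-density machinery used elsewhere in the paper; however it imports a much heavier analytic input than the index formula, and you should check that the hypotheses of \cite[Theorem~1.7]{HM:14} are actually met here, since the paper only invokes that result in the specific setting of Theorem~\ref{complete} (where $\sqrt{-1}R^L=\omega$ off a compact set), not for an arbitrary semipositive $\Gamma$-invariant bundle on a covering. The index-theoretic route sidesteps this verification entirely and yields the sharp asymptotic rather than merely the order of growth.
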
  
   
\begin{proof}
	 It follows from Riemann-Roch-Hirzenbruch formula for coverings \cite{Atiyah:76}
	\begin{equation}
	\sum_{q=0}^{n}(-1)^{n-q}\dim_{\Gamma}\ov H^{0,q}_{(2)}(X,L^k\otimes E)=\frac{k^n}{n!}\int_{X/\Gamma}(-1)^{n}c_1(L/\Gamma,h^{L/\Gamma})^n+o(k^n)
	\end{equation}
	(see also \cite[Theorem 3.6.7]{MM}) and Theorem \ref{coveringthm}.
\end{proof} 

\section{Analytic singular line bundles and vanishing theorems}\label{sectionsing}
   
Let $X$ be a connected compact complex manifold of dimension $n$ and $L$ a holomorphic line bundle on $X$. Let $h^L$ be a Hermitian metric with analytic singularities with local weight
\begin{equation} \label{analyticsingfct}
\varphi=\frac{c}{2}\log(\sum_{j\in J}|f_j|^2)+\psi,
\end{equation}
where $J$ is at most countable, $c$ is a non-negative rational number, $f_j$ are non-zero holomorphic functions and $\psi$ is a smooth function, such that $h^L(e_L,e_L)=e^{-2\varphi}$ for a local holomorphic frame $e_L$ of $L$.
We denote by $\mathscr{J}(h^L):=\mathscr{J}(\varphi)$ the Nadel multiplier ideal sheaf of $h^L$. We define the regular part of $X$ with respect to $h^L$ by $R(h^L):=\{x\in X: h^L~\mbox{smooth on a neighborhood of}~x \}$ and the singular part by $S(h^L):=X\setminus R(h^L)$.   
    
\subsection{Semipositive line bundles endowed metric with analytic singularities}
  
In this section, we follow the argument of Bonavero's singular holomorphic Morse inequalities 
\cite{Bona:98} closely and provide three lemmas, see \cite[2.3.2]{MM} for details. 
In the end, we combine them to prove a result which is analogue to Theorem \ref{cmptthm}. 
  
Firstly, we blow up the singularities of $h^L$ as below, 
see \cite[Lemma 2.3.19]{MM}.	   
\begin{lemma}\label{blowup}
There exists a proper modification $\wi{\pi}:\wi{X}\longrightarrow X$ 
such that the local weight $\wi{\varphi}$ of the metric $h^{\wi{L}} =\wi{\pi}^*h^L$
on $\wi{L}=\wi{\pi}^*L$ has the form 
$\varphi\circ\wi{\pi}=\frac{c}{2}\log|g|^2+\wi{\psi}$, 
where $g$ is holomorphic and $\wi{\psi}$ is smooth.
\end{lemma}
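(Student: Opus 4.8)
The plan is to recognise the singular factor $\sum_{j\in J}|f_j|^2$ as attached to the ideal sheaf generated by the $f_j$, and to kill that ideal by a principalization, after which the pulled-back singularity is carried by a single monomial. First I would introduce the ideal sheaf $\mathscr{I}\subset\mathcal{O}_X$ generated by the family $\{f_j\}_{j\in J}$. Although $J$ may be countable, the strong Noetherian property of coherent analytic sheaves together with the compactness of $X$ shows that $\mathscr{I}$ is coherent and that near each point it is already generated by finitely many of the $f_j$, say $f_{j_1},\dots,f_{j_N}$; its zero set is precisely the singular locus $S(h^L)$. One then checks that on relatively compact sets the sum $\sum_{j\in J}|f_j|^2$ is comparable to the finite sum $\sum_{i=1}^N|f_{j_i}|^2$, so that, up to a locally bounded term, we may work with the coherent ideal $\mathscr{I}$ alone.

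Next I would invoke Hironaka's theorem on principalization of ideal sheaves (equivalently, resolution of singularities): there is a proper modification $\wi{\pi}\colon\wi{X}\to X$, a finite composition of blow-ups with smooth centres and biholomorphic over $X\setminus S(h^L)$, such that the inverse image ideal sheaf $\wi{\pi}^{-1}\mathscr{I}\cdot\mathcal{O}_{\wi{X}}$ is invertible. Thus in suitable local holomorphic coordinates on $\wi{X}$ this ideal is principal, generated by a single holomorphic function $g$ which is a monomial cutting out a normal crossing divisor. This is the step that does the real work, and I would simply cite it (cf.\ \cite[Lemma 2.3.19]{MM}, following \cite{Bona:98}).

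Finally I would factor and conclude. Since $\wi{\pi}^{-1}\mathscr{I}\cdot\mathcal{O}_{\wi{X}}=(g)$, every $f_j\circ\wi{\pi}$ is divisible by $g$, say $f_j\circ\wi{\pi}=g\,u_j$ with $u_j$ holomorphic; and because the $g\,u_j$ generate $(g)$, the $u_j$ generate the unit ideal locally and hence have no common zero. Consequently $\sum_{j}|u_j|^2$ is smooth (indeed real-analytic) and bounded below away from zero, so that
\[
\wi{\varphi}=\varphi\circ\wi{\pi}=\frac{c}{2}\log\Big(\sum_{j\in J}|f_j\circ\wi{\pi}|^2\Big)+\psi\circ\wi{\pi}=\frac{c}{2}\log|g|^2+\wi{\psi},
\]
with $\wi{\psi}:=\psi\circ\wi{\pi}+\frac{c}{2}\log\big(\sum_{j}|u_j|^2\big)$ smooth. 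The main obstacle is the countable index set $J$: for a finite family the residual factor $\sum_j|u_j|^2$ is visibly smooth and positive, whereas for infinite $J$ one must use the coherence-based reduction to finitely many generators and the comparability of the infinite and finite sums to guarantee that $\sum_j|u_j|^2$ is genuinely smooth and nonvanishing, so that the correction $\frac{c}{2}\log\big(\sum_j|u_j|^2\big)$ is smooth rather than merely bounded.
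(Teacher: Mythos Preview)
Your proposal is correct and follows exactly the standard principalization argument (coherent ideal sheaf of the $f_j$, Hironaka resolution to make it locally principal, then factor $f_j\circ\wi{\pi}=g\,u_j$ with $\sum_j|u_j|^2$ nowhere vanishing). The paper itself gives no proof of this lemma at all: it merely states the result and cites \cite[Lemma~2.3.19]{MM} (following \cite{Bona:98}), which is precisely the reference you already invoke, so your sketch is in fact more detailed than what the paper provides.
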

Secondly, we construct a smooth hermitian metric 
$h^{\widehat{L}}$ on a modified line bundle $\widehat{L}$ on
$\wi{X}$, see \cite[Lemma 2.3.20, Lemma 2.3.21]{MM}. 
For a holomorphic vector bundle $F$ over $X$, 
we denote  by $\wi F:=\pi^* F$ the pull-back on $\wi X$.
\begin{lemma}\label{modimetric}	
With the proper modification in Lemma \ref{blowup}, there exists a holomorphic line bundel $\widehat{L}$ on $\wi{X}$ and a smooth Hermtian metric $h^{\widehat{L}}$ satisfying the following conditions:
\begin{itemize}
 \item[(1)] There exists $m\in\N\setminus\{0\}$ such that the curvature is locally given by
 \begin{equation}
 R^{(\widehat{L},h^{\widehat{L}})}=2m\dbar\ddbar\wi{\psi};
 \end{equation} 

 \item[(2)] 
 For any $k\in \N\setminus\{0\}$ and arbitrary holomorphic vector bundle $\wi E$ on $\wi X$, there exists $k'\in \N$, $m'\in [ 0,m )$ with $k=mk'+m'$, and a holomorphic vector bundle $\wi{E}_{m'} $ over $\wi{X}$ with $\rank(\wi{E}_{m'})=\rank(\wi{E})$,  such that 
 \begin{equation}\label{relacoheq1}
  H^q(\wi{X},\wi{L}^k\otimes \wi E\otimes \mathscr{J}(h^{\wi L^k}))=H^q(\wi{X},\widehat{L}^{k'}\otimes \wi{E}_{m'}).	
 \end{equation}
Here note that there exists $C_1>0$ and $C_2>0$ such that for any integer $p\in [0,n]$ and $k$ large enough, $C_1k^p\leq k'^p\leq C_2k^p$.
\end{itemize}
\end{lemma}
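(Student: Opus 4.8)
The plan is to strip the singular part off $h^{\widetilde L}$ by twisting with the divisor produced by the log resolution of Lemma \ref{blowup}, so that the resulting bundle carries a genuine smooth metric, and then to match cohomology by absorbing the fractional remainder of $k$ into a finite list of auxiliary bundles indexed by $m'$. Concretely, I would take $m$ to be the denominator of the rational number $c$, so that $mc\in\N$; this is forced, and it is the same $m$ that appears in condition (1). The singular part $\frac c2\log|g|^2$ of the weight $\widetilde\varphi=\frac c2\log|g|^2+\widetilde\psi$ has zero divisor $\operatorname{div}(g)$, which by the resolution is normal crossing, and since across charts the local functions $g$ differ only by the nonvanishing holomorphic transition functions of $\widetilde L$, these local divisors glue to a global effective divisor $D$ on $\widetilde X$ with $mcD$ integral. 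I then set
\[
\widehat L:=\widetilde L^{m}\otimes\mathcal O(-mcD),
\]
and equip it with $h^{\widehat L}:=h^{\widetilde L^{m}}\otimes h_{\mathcal O(mcD)}^{-1}$, where $h_{\mathcal O(D)}$ is the singular metric induced by the canonical section cutting out $D$. The singular contributions $\frac{mc}{2}\log|g|^{2}$ cancel, leaving the smooth local weight $m\widetilde\psi$; hence $h^{\widehat L}$ is smooth with $R^{(\widehat L,h^{\widehat L})}=2m\,\dbar\ddbar\widetilde\psi$, which is condition (1).

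For condition (2), the key input is the explicit shape of the multiplier ideal after resolution. Since $k\widetilde\psi$ is smooth, $\mathscr J(h^{\widetilde L^{k}})=\mathscr J(kc\log|g|)$, and because $\operatorname{div}(g)$ is normal crossing the standard integrability computation gives $\mathscr J(h^{\widetilde L^{k}})=\mathcal O(-\lfloor kcD\rfloor)$, where the floor rounds down each multiplicity. Writing $k=mk'+m'$ with $0\le m'<m$, so that $kc=mck'+m'c$ with $mck'\in\N$, I would invoke the elementary splitting $\lfloor kcD\rfloor=mck'D+\lfloor m'cD\rfloor$ and define
\[
\widetilde E_{m'}:=\widetilde L^{m'}\otimes\mathcal O(-\lfloor m'cD\rfloor)\otimes\widetilde E,
\]
which depends only on $m'$ (hence takes finitely many values) and has $\rank(\widetilde E_{m'})=\rank(\widetilde E)$. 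A direct tensor computation then yields the sheaf isomorphism
\[
\widehat L^{k'}\otimes\widetilde E_{m'}\cong\widetilde L^{mk'+m'}\otimes\mathcal O\big(-mck'D-\lfloor m'cD\rfloor\big)\otimes\widetilde E=\widetilde L^{k}\otimes\widetilde E\otimes\mathscr J(h^{\widetilde L^{k}}),
\]
and passing to cohomology gives \eqref{relacoheq1}. The comparison $C_1k^{p}\le k'^{p}\le C_2k^{p}$ is immediate from $k'=(k-m')/m$ once $k\ge 2m$.

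The main obstacle, as I see it, is the structural bookkeeping that makes the singular twist globally well defined and compatible with the floor operation, rather than any hard estimate. One must verify that $D$ glues across charts, that $\mathcal O(mcD)$ is a genuine integral line bundle precisely because $m$ is chosen as the denominator of $c$, and — most delicately — that the normal crossing property furnished by Lemma \ref{blowup} is exactly what legitimizes the clean formula $\mathscr J(h^{\widetilde L^{k}})=\mathcal O(-\lfloor kcD\rfloor)$ together with the integer splitting $\lfloor kcD\rfloor=mck'D+\lfloor m'cD\rfloor$, the latter hinging on $mck'\in\N$. Once these points are secured, the verifications of (1) and (2) reduce to routine tensor algebra and the local integrability computation, and the polynomial comparison of $k'$ with $k$ is trivial.
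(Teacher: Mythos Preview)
Your proposal is correct and is precisely the Bonavero/Ma--Marinescu construction that the paper cites as \cite[Lemma 2.3.20, Lemma 2.3.21]{MM} in lieu of a proof; the paper itself gives no argument here, so you are supplying the details of the referenced one rather than an alternative route. One small remark on phrasing: the global divisor $D$ is really determined by the principalization in Lemma~\ref{blowup} (the pullback of the ideal $(f_j)$ becomes $\mathcal O_{\widetilde X}(-D)$), not merely by comparing local $g$'s via the transition functions of $\widetilde L$, though your overlap argument does show consistency; and you are right that the normal crossing property of $D$ is the crux that makes both $\mathscr J(h^{\widetilde L^k})=\mathcal O(-\lfloor kcD\rfloor)$ and the integer splitting $\lfloor kcD\rfloor=mck'D+\lfloor m'cD\rfloor$ valid.
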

 
Thirdly, relation to the cohomology on $X$, see \cite[(2.3.45)]{MM}.

\begin{lemma}\label{relacohomology}
	 Let $E$ be an arbitrary holomorphic vector bundle over $X$. With the proper modification in Lemma \ref{blowup}, for all $q\geq 0$ and $k$ large enough, there exists an isomorphism
\begin{equation}\label{relacoheq2}
H^q(X,L^k\otimes E\otimes \mathscr{J}(h^{L^k}))
\cong H^q(\wi{X},\wi{L}^k\otimes \wi{E}\otimes K_{\wi{X}}\otimes \wi{K_X^*}\otimes \mathscr{J}(h^{\wi{L}^k})).
\end{equation}	
\end{lemma}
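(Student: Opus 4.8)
The plan is to descend cohomology from $\wi X$ to $X$ along the proper modification $\wi{\pi}\colon\wi X\to X$ of Lemma~\ref{blowup} by means of the Leray spectral sequence, once the relevant higher direct image sheaves have been identified. Write
\begin{equation*}
\mathcal F:=\mathcal O_{\wi X}\big(\wi L^k\otimes\wi E\otimes K_{\wi X}\otimes\wi{K_X^*}\big)\otimes\mathscr J(h^{\wi L^k})
\end{equation*}
for the sheaf appearing on the right-hand side of \eqref{relacoheq2}. Since $\wi L=\wi{\pi}^*L$, $\wi E=\wi{\pi}^*E$ and $\wi{K_X^*}=\wi{\pi}^*K_X^*$ are all pulled back from $X$, the projection formula gives
\begin{equation*}
R^j\wi{\pi}_*\mathcal F=\mathcal O_X\big(L^k\otimes E\otimes K_X^*\big)\otimes R^j\wi{\pi}_*\big(\mathcal O_{\wi X}(K_{\wi X})\otimes\mathscr J(h^{\wi L^k})\big),\qquad j\geq 0,
\end{equation*}
so everything reduces to the direct images of the canonical sheaf of $\wi X$ twisted by $\mathscr J(h^{\wi L^k})$.

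First I would establish the direct image identity
\begin{equation*}
\wi{\pi}_*\big(\mathcal O_{\wi X}(K_{\wi X})\otimes\mathscr J(h^{\wi L^k})\big)\cong\mathcal O_X(K_X)\otimes\mathscr J(h^{L^k}).
\end{equation*}
This is the standard change-of-variables behaviour of multiplier ideals under a modification, namely $\mathscr J(\varphi)=\wi{\pi}_*\big(\mathcal O_{\wi X}(K_{\wi X/X})\otimes\mathscr J(\varphi\circ\wi{\pi})\big)$, applied to the weight $k\varphi$ and combined with $K_{\wi X/X}=K_{\wi X}\otimes\wi{\pi}^*K_X^*$ and the projection formula; here one uses that $h^{\wi L}=\wi{\pi}^*h^L$ has weight $\wi\varphi=\varphi\circ\wi{\pi}$, so $\mathscr J(h^{\wi L^k})=\mathscr J(k\wi\varphi)$. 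Substituting this identity into the projection formula above, the factors $K_X^*$ and $K_X$ cancel and I obtain $\wi{\pi}_*\mathcal F\cong\mathcal O_X(L^k\otimes E)\otimes\mathscr J(h^{L^k})$, exactly the sheaf computing the left-hand side of \eqref{relacoheq2}.

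Next I would prove the relative vanishing
\begin{equation*}
R^j\wi{\pi}_*\big(\mathcal O_{\wi X}(K_{\wi X})\otimes\mathscr J(h^{\wi L^k})\big)=0,\qquad j\geq 1.
\end{equation*}
Granting the two displayed statements, the Leray spectral sequence $E_2^{p,j}=H^p\big(X,R^j\wi{\pi}_*\mathcal F\big)\Rightarrow H^{p+j}(\wi X,\mathcal F)$ degenerates at $E_2$, whence $H^q(\wi X,\mathcal F)\cong H^q(X,\wi{\pi}_*\mathcal F)\cong H^q\big(X,L^k\otimes E\otimes\mathscr J(h^{L^k})\big)$ for every $q\geq 0$, which is precisely \eqref{relacoheq2}.

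The main obstacle is this relative vanishing: the direct image identity and the Leray argument are formal once the sheaves are identified, whereas $R^j\wi{\pi}_*\big(\mathcal O_{\wi X}(K_{\wi X})\otimes\mathscr J(h^{\wi L^k})\big)=0$ is the substantive input. The role of Lemma~\ref{blowup} is precisely to bring the weight into the divisorial normal form $\wi\varphi=\tfrac{c}{2}\log|g|^2+\wi\psi$ with $g$ holomorphic and $\wi\psi$ smooth, in which $\mathscr J(h^{\wi L^k})$ becomes an invertible sheaf of the shape $\mathcal O_{\wi X}\big(-\lfloor kc\,\mathrm{div}(g)\rfloor\big)$ (up to the smooth factor $e^{-k\wi\psi}$) and $\mathcal O_{\wi X}(K_{\wi X})\otimes\mathscr J(h^{\wi L^k})$ is locally free; the vanishing then follows from the local vanishing theorem for multiplier ideals, i.e.\ relative Grauert--Riemenschneider/Takegoshi vanishing for the canonical bundle twisted by an effective, $\wi{\pi}$-exceptionally supported divisor, which is non-asymptotic in $k$. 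The qualifier ``$k$ large'' is inherited from the construction of Lemma~\ref{modimetric} rather than needed for the present descent; the only real care required is to keep the identification of $\mathscr J(h^{\wi L^k})$ with a line bundle uniform as $k$ varies and to track the integral parts $\lfloor\,\cdot\,\rfloor$ correctly.
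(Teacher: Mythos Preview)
Your approach is correct and is precisely the standard argument underlying the reference \cite[(2.3.45)]{MM} that the paper cites in lieu of a proof: projection formula, the change-of-variables identity $\wi{\pi}_*\big(\mathcal O_{\wi X}(K_{\wi X/X})\otimes\mathscr J(\varphi\circ\wi{\pi})\big)=\mathscr J(\varphi)$, local vanishing of the higher direct images, and degeneration of the Leray spectral sequence. One small correction: the effective divisor $\lfloor kc\,\mathrm{div}(g)\rfloor$ need not be $\wi{\pi}$-exceptionally supported (the zero locus of the $f_j$ downstairs may already be a divisor), but this is irrelevant since the local vanishing theorem for multiplier ideals applies to any log resolution of the ideal, not only to exceptional divisors; your observation that the restriction to large $k$ is not actually needed for this particular isomorphism is also correct.
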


Finally we substitute these into our setting of semi-positivity of $(L,h^L)$ on $X$, and obtain another generalization of Theorem \ref{cmptthm}.
 
\begin{proof}[Proof of Theorem \ref{singlinebundle}]
	
 Consider a local weight $\varphi$ like in (\ref{analyticsingfct}) defined on an open connected subset $U\subset X$. Thus 
\begin{equation}
S(h^L)\cap U=\{x\in U: \varphi~\mbox{not smooth at}~x \}=\cap_{j\in J} Z(f_j),
\end{equation}
where $Z(f_j):=\{x\in U: f_j(x)=0\}$.
 
Let $\wi{\pi}:\wi{X}\rightarrow X$ be a proper modification
of Lemma \ref{blowup}. The local weight of $(\wi{L},h^{\wi{L}})$ on $\wi{U}:=\wi{\pi}^{-1}(U)$ has the form
\begin{equation}
\varphi\circ\wi{\pi}=\frac{c}{2}{\log(\sum_{j\in J}|f_j\circ\wi{\pi}|^2)}+\psi\circ\wi{\pi}=\frac{c}{2}\log|g|^2+\wi{\psi},
\end{equation}
 where $\wi{\psi}$ is smooth on $\wi{U}$. Thus $Z(g):=\{ z\in \wi U: g(y)=0 \}$ satisfies
 \begin{equation}
  Z(g)=\{ y\in \wi{U}:\varphi\circ\wi{\pi}~\mbox{not smooth at}~y \}=\wi{\pi}^{-1}(S(h^L)\cap U).
 \end{equation}

	Since $c_1(L,h^L)=\frac{\sqrt{-1}}{2\pi}R^{(L,h^L)}\geq 0$ on $X\setminus S(h^L)$, $\varphi$ is smooth plurisubharmonic on $U\setminus S(h^L)$, i.e., $\sqrt{-1}\dbar\ddbar\varphi\geq 0$ on $U\setminus S(h^L)$. Therefore, $\varphi\circ\wi{\pi}$ is smooth plurisubharmonic on $\wi{\pi}^{-1}(U\setminus S(h^L))=\wi{U}\setminus Z(g)$, and since $\dbar\ddbar\log|g|^2=0$ on $\wi U\setminus Z(g)$, we have on $\wi{U}\setminus Z(g)$
	\begin{equation}
	\sqrt{-1}\dbar\ddbar \wi{\psi}=\sqrt{-1}\dbar\ddbar(\varphi\circ \wi{\pi})\geq 0.
	\end{equation}	
  
Next we show $\sqrt{-1}\dbar\ddbar\wi{\psi}\geq 0$ on $Z(g)$. 
In fact, suppose there exists $y_0\in Z(g)$ such that 
$\sqrt{-1}\dbar\ddbar\wi{\psi}(y_0)$ has at least one negative eigenvalue. 
By the smoothness of $\wi\psi$, there exists an open neighbourhood 
$V_0\subset \wi{U}$ of $y_0$ satisfying  $\sqrt{-1}\dbar\ddbar\wi{\psi}$ 
has at least one negative eigenvalue on $V_0$. Since $Z(g)$ 
is nowhere dense subset in $\wi U$, there exists $y_1\in V_0$ such that 
$y_1\in \wi U\setminus Z(g)$. So we obtain a contradiction.

Finally, by using Lemma \ref{modimetric} (1),  we obtain $c_1(\widehat{L},h^{\widehat{L}})=
\frac{\sqrt{-1}}{2\pi}R^{(\widehat{L},h^{\widehat{L}})}=
\frac{m}{\pi}\sqrt{-1}\dbar\ddbar\wi{\psi}\geq 0$ on $\wi{U}$, 
i.e., $(\widehat{L},h^{\widehat{L}})$ is a semipositive line bundle 
on $\wi{X}$. By (\ref{relacoheq2}), (\ref{relacoheq1}) and 
Theorem \ref{cmptthm} applied to $\wi X$ with $(\widehat L,h^{\widehat{L}})$, we have  
$$
\dim H^q(X, L^k\otimes E\otimes\mathscr{J}(h^{L^k}))
=H^q(\wi{X},\widehat{L}^{k'}\otimes (\wi{E}\otimes K_{\wi{X}}\otimes \wi{K_X^*})_{m'})
\leq C k'^{n-q}\leq CC_2k^{n-q}.
$$
Since a positive current $c_1(L,h^L)$ on $X$ is semipositive on $R(h^L)$, the last assertion follows. 
\end{proof}
    
In analogy to the covering manifolds case, we have the following estimate 
of the space of holomorphic sections obtained firstly by Bonavero, 
see \cite{Boucksom:02} and \cite[Corollary 2.3.46]{MM}. 
We remark that we do not use Bonavero's holomorphic Morse inequalities 
in our proof.
\begin{cor}
	Under the hypothesis of Theorem \ref{singlinebundle},
	if $(L,h^L)$ is positive at least at one point additionally, then for $k$ large enough
	\begin{equation}
	C_1k^n\leq \dim H^{0}(X, L^k\otimes E\otimes \mathscr{J}(h^{L^k}))\leq C_2k^n.
	\end{equation}
	In particular, if $X$ is additionally K\"{a}hler, then $X$ is projective.
\end{cor}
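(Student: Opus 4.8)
The plan is to combine the cohomological reduction from the proof of Theorem~\ref{singlinebundle} with asymptotic Riemann--Roch on the modification, and then to isolate $H^0$ using the bounds of Theorem~\ref{singlinebundle} for the higher cohomology; this deliberately avoids holomorphic Morse inequalities. Recall from that proof the proper modification $\wi\pi\colon\wi X\to X$ of Lemma~\ref{blowup}, the \emph{semipositive} line bundle $(\widehat L,h^{\widehat L})$ on $\wi X$ of Lemma~\ref{modimetric}, and the isomorphisms
\begin{equation}
H^q(X,L^k\otimes E\otimes\mathscr{J}(h^{L^k}))\cong H^q(\wi X,\widehat L^{k'}\otimes F_{m'}),\qquad F_{m'}:=(\wi E\otimes K_{\wi X}\otimes\wi{K_X^*})_{m'},
\end{equation}
valid for all $q\geq0$ and all large $k$, where $k=mk'+m'$ with $m'\in[0,m)$ and $\rank F_{m'}=1$. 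First I would take the alternating sum over $q$ to get the equality of Euler characteristics $\chi(X,L^k\otimes E\otimes\mathscr{J}(h^{L^k}))=\chi(\wi X,\widehat L^{k'}\otimes F_{m'})$, and then apply asymptotic Riemann--Roch on the compact manifold $\wi X$,
\begin{equation}
\chi(\wi X,\widehat L^{k'}\otimes F_{m'})=\frac{k'^n}{n!}\int_{\wi X}c_1(\widehat L,h^{\widehat L})^n+o(k'^n).
\end{equation}
As $m'$ ranges over the finite set $\{0,\dots,m-1\}$ and $\rank F_{m'}=1$, the leading coefficient and the error are uniform in $m'$.

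The decisive point is the strict positivity of the leading coefficient, $\int_{\wi X}c_1(\widehat L,h^{\widehat L})^n>0$. Since $(\widehat L,h^{\widehat L})$ is semipositive, $c_1(\widehat L,h^{\widehat L})^n\geq0$ pointwise, so it suffices to produce one point of strict positivity. Let $x_0$ be a point where $(L,h^L)$ is positive; then $h^L$ is smooth near $x_0$, hence $x_0\in R(h^L)$. Working in a local chart $U\ni x_0$ as in the proof of Theorem~\ref{singlinebundle}, and using that $\wi\pi$ resolves the ideal generated by the $f_j$ (whose common zero set is exactly $S(h^L)$), $\wi\pi$ restricts to a biholomorphism over the dense open set $R(h^L)$; set $\wi x_0:=\wi\pi^{-1}(x_0)\in\wi U\setminus Z(g)$. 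There $\imat\dbar\ddbar\wi\psi=\imat\dbar\ddbar(\varphi\circ\wi\pi)=\wi\pi^*(\imat\dbar\ddbar\varphi)$, and as $\wi\pi$ is a local biholomorphism at $\wi x_0$ while $\imat\dbar\ddbar\varphi(x_0)>0$, Lemma~\ref{modimetric}(1) gives $c_1(\widehat L,h^{\widehat L})(\wi x_0)=\tfrac{m}{\pi}\imat\dbar\ddbar\wi\psi(\wi x_0)>0$. By continuity $c_1(\widehat L,h^{\widehat L})^n>0$ on a neighbourhood of $\wi x_0$, so the integral is positive.

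Writing $a:=\tfrac1{n!}\int_{\wi X}c_1(\widehat L,h^{\widehat L})^n>0$ and using $k'\asymp k$ from Lemma~\ref{modimetric}(2), the two displays yield $\chi(X,L^k\otimes E\otimes\mathscr{J}(h^{L^k}))=ak'^n+o(k'^n)\asymp k^n$. On the other hand Theorem~\ref{singlinebundle} gives $\dim H^q(X,L^k\otimes E\otimes\mathscr{J}(h^{L^k}))\leq Ck^{n-q}$ for every $q\geq1$, so $\sum_{q\geq1}\dim H^q=O(k^{n-1})$. From $\dim H^0=\chi+\sum_{q\geq1}(-1)^{q+1}\dim H^q$ I then read off both $\dim H^0\geq\chi-O(k^{n-1})\geq C_1k^n$ and $\dim H^0\leq\chi+O(k^{n-1})\leq C_2k^n$ for $k$ large, which is the asserted estimate. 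Finally, since $\mathscr{J}(h^{L^k})\subseteq\mathcal{O}_X$ gives $\dim H^0(X,L^k\otimes E)\geq C_1k^n$; as the fixed twist by $E$ does not change the order of growth, $L$ has maximal Kodaira--Iitaka dimension and is therefore big, so $X$ is Moishezon; and a Moishezon manifold that is K\"ahler is projective, which gives the last assertion (cf.\ \cite{Boucksom:02}, \cite[Corollary~2.3.46]{MM}).

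The main obstacle is precisely the positivity $\int_{\wi X}c_1(\widehat L,h^{\widehat L})^n>0$: it rests on checking that $\wi\pi$ is a local biholomorphism at a point over which $(L,h^L)$ is positive, so that the strict positivity of $\imat\dbar\ddbar\varphi$ transfers to $\imat\dbar\ddbar\wi\psi$ and hence to $c_1(\widehat L,h^{\widehat L})$. Everything else is either cited (asymptotic Riemann--Roch, and the theorem that K\"ahler Moishezon manifolds are projective) or inherited directly from Theorem~\ref{singlinebundle}.
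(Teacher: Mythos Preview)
Your proof is correct and follows the same approach as the paper: combine an asymptotic Riemann--Roch--Hirzebruch formula with the $O(k^{n-q})$ bounds on higher cohomology from Theorem~\ref{singlinebundle} to isolate $H^0$, then conclude that $L$ is big, $X$ is Moishezon, and hence projective if K\"ahler. The only cosmetic difference is that the paper quotes the singular Riemann--Roch formula $\chi(X,L^k\otimes E\otimes\mathscr{J}(h^{L^k}))=\tfrac{k^n}{n!}\int_{R(h^L)}c_1(L,h^L)^n+o(k^n)$ directly from \cite[(2.3.45), (2.3.31), (1.7.1)]{MM}, whereas you unpack its proof by passing to $\wi X$ and applying smooth Riemann--Roch to $(\widehat L,h^{\widehat L})$---your transfer of strict positivity to $c_1(\widehat L,h^{\widehat L})$ is exactly the argument the paper gives later in the proof of Theorem~\ref{singSiu84thm}.
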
 
\begin{proof}
It follows from Theorem \ref{singlinebundle} and asymptotic Riemann-Roch-Hirzenbruch formula for $h^L$ with analytic singularities (see \cite[(2.3.45), (2.3.31), (1.7.1)]{MM})
\begin{equation}\label{singRRH}
\sum_{q=0}^{n}(-1)^{n-q}\dim H^q(X,L^k\otimes E\otimes\mathscr{J}(h^{L^k}))=\frac{k^n}{n!}\int_{R(h^L)} (-1)^n c_1(L,h^L)^n+o(k^n),
\end{equation}
where $R(h^L):=\{x\in X: h^L~\mbox{smooth on a neighborhood of}~x \}$ is the regular part of $X$ with respect to $h^L$.
Thus $L$ is big, $X$ is Moishezon. So $X$ is projective when it is K\"{a}hler.
\end{proof}   

\subsection{Vanishing theorems for singular line bundles}
In this section, let $X$ be a compact complex manifold of dimension $n$, let $E$ and $L$ be holomorphic vector bundles over $X$ with $\rank(L)=1$. The key observation in the following Theorem \ref{singAG62} and \ref{singSiu84thm} is that the curvature conditions of line bundles are preserved under the proper modifications (in Lemma \ref{blowup}) as in the proof of Theorem \ref{singlinebundle}, therefore, the proof of Theorem \ref{singAG62} and \ref{singSiu84thm} are analogue to the argument in Theorem \ref{singlinebundle}.

The vanishing theorems for partially positive line bundles \cite[Ch.VII.(5.1)]{Dem}) entails that for arbitrary smooth Hermitian metric $h^L$ on $L$, if $c_1(L,h^L)$ has at least $n-s+1$ positive eigenvalues on $X$, then 
\begin{equation}\label{eqAG}
H^q(X,E\otimes L^k)=0
\end{equation}
for $q\geq s$ and sufficiently large $k$. 
Similarly we obtain the analytic singular version of (\ref{eqAG}) as follows.
 
\begin{proof}[Proof of Theorem \ref{singAG62}]
	Consider a local weight $\varphi$ like in (\ref{analyticsingfct}) defined on an open connected subset $U\subset X$. Thus 
	\begin{equation}
	S(h^L)\cap U=\{x\in U: \varphi~\mbox{not smooth at}~x \}=\cap_{j\in J} Z(f_j),
	\end{equation}
	where $Z(f_j):=\{x\in U: f_j(x)=0\}$.
	
	Let $\wi{\pi}:\wi{X}\rightarrow X$ be a proper modification
	of Lemma \ref{blowup}. Note $\wi\pi:\wi X\setminus \wi B\cong X\setminus B$ is biholomorphic and $B\subset X$ and $\wi B\subset \wi X$ are nowhere dense (closed) analytic subsets.  The local weight of $(\wi{L},h^{\wi{L}})$ on $\wi{U}:=\wi{\pi}^{-1}(U)$ has the form
	\begin{equation}
	\varphi\circ\wi{\pi}=\frac{c}{2}{\log(\sum_{j\in J}|f_j\circ\wi{\pi}|^2)}+\psi\circ\wi{\pi}=\frac{c}{2}\log|g|^2+\wi{\psi},
	\end{equation}
	where $\wi{\psi}$ is smooth on $\wi{U}$. Thus $Z(g):=\{ z\in \wi U: g(y)=0 \}$ satisfies
	\begin{equation}
	Z(g)=\{ y\in \wi{U}:\varphi\circ\wi{\pi}~\mbox{not smooth at}~y \}=\wi{\pi}^{-1}(S(h^L)\cap U).
	\end{equation}
	
	Firstly, we show the number of positive eigenvalues was preserved by $\wi\pi$ on $\wi X$.
	
	Since $\sqrt{-1}\dbar\ddbar\varphi$ has at least $n-s+1$ positive eigenvalues on $U\setminus S$, 
	and
	\begin{equation}
	\sqrt{-1}\dbar\ddbar \wi{\psi}=\sqrt{-1}\dbar\ddbar(\varphi\circ \wi{\pi})\quad\mbox{on}~\wi{U}\setminus Z(g),
	\end{equation}
	we see $\sqrt{-1}\dbar\ddbar\wi \psi$ has at least $n-s+1$ positive eigenvalues on $\wi U\setminus (Z(g)\cup \wi B)$. 
	
	Next we show $\sqrt{-1}\dbar\ddbar\wi \psi$ has at least $n-s+1$ positive eigenvalues on $\wi U\cap (Z(g)\cup \wi B)$, thus on $\wi U$. In fact, suppose $\sqrt{-1}\dbar\ddbar\wi\psi$ has no more than $n-s$ positive eigenvalues at $x_0\in \wi U\cap (Z(g)\cup \wi B)$. By the smoothness of $\wi\psi$, there exists a neighbourhood $V_0\subset \wi U$ of $x_0$ such that $\sqrt{-1}\dbar\ddbar\wi\psi$ has no more than $n-s$ positive eigenvalues on $V_0$. Since $\wi B$ is nowhere dense, there exists $y_0\in V_0\setminus \wi B$. If $y_0$ is not in $Z(g)$, $y_0\in V_0\setminus (\wi B\cup Z(g))\subset \wi U\setminus (\wi B\cup Z(g))$ which leads to a contradiction. If $y_0\in Z(g)$, $y_0\in (V_0\cap Z(g))\setminus \wi B$. There exists a neighbourhood $W_0\subset V_0\setminus \wi B$ of $y_0$ such that $\sqrt{-1}\dbar\ddbar\wi\psi$ has no more than $n-s$ positive eigenvalues on $W_0$. But $W_0$ is not a subset of the nowhere dense set $Z(g)$, thus there exists a point $z_0\in W_0\setminus Z(g)\subset V_0\setminus (\wi B\cup Z(g))\subset \wi U\setminus (\wi B\cup Z(g))$, and $\sqrt{-1}\dbar\ddbar\wi\psi|_{z_0}$ has no more than $n-s$ positive eigenvalues. So it is a contradiction.
	
	By using Lemma \ref{modimetric} (1), $c_1(\widehat{L},h^{\widehat{L}})=\frac{\sqrt{-1}}{2\pi}R^{(\widehat{L},h^{\widehat{L}})}=\frac{m}{\pi}\sqrt{-1}\dbar\ddbar\wi{\psi}$ on $\wi{U}$, $(\widehat{L},h^{\widehat{L}})$ has at least $n-s+1$ positive eigenvalues on $\wi{X}$. By (\ref{relacoheq2}), (\ref{relacoheq1}) and (\ref{eqAG}) applied to $\wi X$ with $(\widehat L,h^{\widehat{L}})$, we have for $q\geq s$ and sufficiently large $k$,
	\begin{equation}
	\dim H^q(X, L^k\otimes E\otimes\mathscr{J}(h^{L^k}))=H^q(\wi{X},\widehat{L}^{k'}\otimes (\wi{E}\otimes K_{\wi{X}}\otimes \wi{K_X^*})_{m'})=0.
	\end{equation}
	$X(t):=\{ x\in R(h^L): c_1(L,h^L)_x~\mbox{is non-degenerated with exactly}~t~\mbox{negative eigenvalues} \}$ and $X(\leq s-1):=\cup_{0\leq t\leq s-1}X(t)$. Finally, the proof is complete by (\ref{singRRH}).
\end{proof} 

\begin{cor}\label{corsemipositve}	
	Let $X$ be a compact manifold of dimension $n$, let $L$ and $E$ be holomorphic line bundles on $X$. Let $h^L$ be the Hermitian metric on $L$ with analytic singularities as in (\ref{analyticsingfct}). Let $0\leq t\leq n$.
	Assume $c_1(L,h^L)\geq 0$ on $R(h^L)$ and $c_1(L,h^L)$ has at least $n-t$ positive eigenvalues on $R(h^L)$. 
	
	Then, as $k\rightarrow \infty$, we have 
	\begin{eqnarray}
	\quad\quad\quad\quad \dim H^q(X, E\otimes L^k\otimes \mathscr{J}(h^{L^k}))=
	\begin{cases}
	\frac{k^n}{n!}\int_{X(0)}c_1(L,h^L)^n+o(k^n),&\mbox{for}~ q=0,\\   
	\mO(k^{n-q}),&\mbox{for}~ 1\leq q\leq t,\\
	0,&\mbox{for}~ q>t.   
	\end{cases}   
	\end{eqnarray}  
\end{cor}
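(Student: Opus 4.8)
The plan is to assemble the three branches from the results already proved for line bundles carrying a metric with analytic singularities, using the two curvature hypotheses to select the correct positivity locus and the sharp leading constant. First I would dispose of the vanishing range $q>t$. Since $c_1(L,h^L)$ has at least $n-t$ positive eigenvalues on $R(h^L)$, I set $s:=t+1$, so that $n-s+1=n-t$ and the hypothesis of Theorem \ref{singAG62} is met verbatim. That theorem then yields $H^q(X,E\otimes L^k\otimes\mathscr{J}(h^{L^k}))=0$ for all $q\geq s=t+1$, i.e.\ for every $q>t$, once $k$ is large; this is exactly the third branch.

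Next, for the range $1\leq q\leq t$, I would invoke Theorem \ref{singlinebundle}. Its single hypothesis is precisely $c_1(L,h^L)\geq 0$ on $R(h^L)$ (semipositivity on the regular set), which we are assuming, so there is $C>0$ with $\dim H^q(X,E\otimes L^k\otimes\mathscr{J}(h^{L^k}))\leq Ck^{n-q}$ for \emph{all} $q\geq 1$ and $k\geq 1$. Restricting to $1\leq q\leq t$ gives the $\mO(k^{n-q})$ branch directly. I would record for later use that this bound holds for every $q\geq 1$, and in particular forces $\dim H^q=\mO(k^{n-q})=o(k^n)$ for each such $q$.

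Finally, for $q=0$ I would feed these estimates into the asymptotic Riemann-Roch-Hirzebruch formula \eqref{singRRH} for metrics with analytic singularities. Isolating the $q=0$ summand, the left-hand side reads $(-1)^n\dim H^0+\sum_{q=1}^n(-1)^{n-q}\dim H^q$, and by the previous paragraph every term with $q\geq 1$ is $o(k^n)$ (indeed $0$ for $q>t$), so the alternating sum collapses to $(-1)^n\dim H^0=(-1)^n\frac{k^n}{n!}\int_{R(h^L)}c_1(L,h^L)^n+o(k^n)$, i.e.\ $\dim H^0=\frac{k^n}{n!}\int_{R(h^L)}c_1(L,h^L)^n+o(k^n)$. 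The only point requiring a short argument—the main, though minor, obstacle—is the replacement of $R(h^L)$ by $X(0)$ in the integral: because $c_1(L,h^L)\geq 0$ on $R(h^L)$, at any regular point where the form is degenerate its top exterior power $c_1(L,h^L)^n$ vanishes, so the integrand is supported on $X(0)=\{x\in R(h^L):c_1(L,h^L)_x>0\}$ and $\int_{R(h^L)}c_1(L,h^L)^n=\int_{X(0)}c_1(L,h^L)^n$. Substituting this gives the $q=0$ branch and completes the three cases.
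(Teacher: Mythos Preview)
Your proof is correct and follows essentially the same approach as the paper: the third branch via Theorem~\ref{singAG62} with $s=t+1$, the second via Theorem~\ref{singlinebundle}, and the $q=0$ case by feeding the $o(k^n)$ bounds for $q\geq 1$ into an alternating-sum identity and using semipositivity to reduce the integration domain to $X(0)$. The only cosmetic difference is that the paper routes the $q=0$ step through \eqref{eqsum} (itself derived from \eqref{singRRH} in the proof of Theorem~\ref{singAG62}) rather than invoking \eqref{singRRH} directly as you do.
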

     
\begin{proof}
	The third equality is from Theorem \ref{singAG62}, the second equality is from Theorem \ref{singlinebundle} and the first is given by combining the second equality and (\ref{eqsum}).
\end{proof}

\begin{rem}
	As a remark on Corollary \ref{corsemipositve}, if $t=0$, $(L,h^L)$ is positive on $R(h^L)$; if $t=n$, $(L,h^L)$ is semipositive on $R(h^L)$. The smooth version (i.e., $ h^L~\mbox{is smooth everywhere}$) of Corollary \ref{corsemipositve} 
	are given by \cite{Dem:85}, \cite{BB:02} and \cite[Ch.VII.(5.1)]{Dem} respectively.
\end{rem}

By \cite[Appendix, Theorem 6 and Theorem 3 (ii)]{Rie:71}, 
for a compact complex manifold $X$ and a holomorphic Hermitian line bundle 
$(L,h^L)$ on $X$, if $X$ is Moishezon and $c_1(L,h^L)\geq 0$ on $X$ and 
$c_1(L,h^L)>0$ at least at one point, 
then $H^q(X,K_X\otimes L)=0$
for all $q\geq 1$.	Furthermore, Siu's resolution 
\cite[Theorem 1 (see Page 175)]{Siu:85}) of Grauert-Riemenschneider conjecture
shows that the Moishezon assumption is unnecessary,     
see also \cite[Ch.VII. (3.5) Theorem]{Dem}.
Thus, Siu's vanishing theorem entails that, for arbitrary smooth Hermitian metric $h^L$ on $L$, 
if $c_1(L,h^L)\geq 0$ on $X$ and $c_1(L,h^L)>0$ at least at one point, then 
\begin{equation}\label{eqSiu}
H^q(X,K_X\otimes L)=0
\end{equation}
for $q\geq 1$. Similarly, the analytic singular version of (\ref{eqSiu}) is as follows.
     
\begin{thm}[Theorem \ref{singSiu84}]\label{singSiu84thm}
	Let $X$ be a compact manifold of dimension $n$, let $L$ be a holomorphic line bundle. Let $h^L$ be the Hermitian metric on $L$ with analytic singularities as in (\ref{analyticsingfct}). If $c_1(L,h^L)\geq 0$ on $R(h^L)$ and $c_1(L,h^L)>0$ at least at one point in $R(h^L)$, then 
	\begin{equation}
	H^q(X,K_X\otimes L^{km}\otimes \mathscr{J}(h^{L^{km}}))=0
	\end{equation}
	for $q\geq 1$, $k\geq 1$ and $m\in \N\setminus\{ 0 \}$ satisfying $mc\in \N$ with non-negative rational number $c$ in (\ref{analyticsingfct}). 
\end{thm}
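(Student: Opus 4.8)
The plan is to mimic the blow-up argument of Theorem~\ref{singlinebundle} and Theorem~\ref{singAG62}, reducing the claim to the \emph{smooth} Siu vanishing \eqref{eqSiu} on a resolution. First I would apply Lemma~\ref{blowup} to obtain a proper modification $\wi\pi\colon\wi X\to X$, biholomorphic over $R(h^L)$, on which the pulled-back weight reads $\varphi\circ\wi\pi=\frac c2\log|g|^2+\wi\psi$ with $g$ holomorphic and $\wi\psi$ smooth, together with the smooth line bundle $(\widehat L,h^{\widehat L})$ of Lemma~\ref{modimetric}, whose curvature is $R^{(\widehat L,h^{\widehat L})}=2m\dbar\ddbar\wi\psi$. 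Here $\wi X\setminus Z(g)=\wi\pi^{-1}(R(h^L))$, where $Z(g)=\wi\pi^{-1}(S(h^L))$ is nowhere dense.

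Next I would transfer the curvature hypotheses to $\widehat L$, exactly as in Theorem~\ref{singlinebundle}. On $\wi X\setminus Z(g)$ the term $\log|g|^2$ is pluriharmonic, so $\imat\dbar\ddbar\wi\psi=\imat\dbar\ddbar(\varphi\circ\wi\pi)\ge 0$ there, $\varphi$ being plurisubharmonic on $R(h^L)$; since $Z(g)$ is nowhere dense and $\wi\psi$ is smooth, this semipositivity propagates across $Z(g)$, whence $\imat\dbar\ddbar\wi\psi\ge 0$ on all of $\wi X$. For the required point of strict positivity I would take $x_0\in R(h^L)$ with $c_1(L,h^L)_{x_0}>0$; near $\wi x_0:=\wi\pi^{-1}(x_0)$ the map $\wi\pi$ is biholomorphic and $g$ nonvanishing, so $\imat\dbar\ddbar\wi\psi(\wi x_0)=\wi\pi^{*}(\imat\dbar\ddbar\varphi)(\wi x_0)>0$. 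Thus $c_1(\widehat L,h^{\widehat L})=\frac m\pi\,\imat\dbar\ddbar\wi\psi$ is semipositive on $\wi X$ and positive at $\wi x_0$, and hence so is $\widehat L^{k'}$ for every $k'\ge 1$.

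Finally I would descend the cohomology and apply Siu. Lemma~\ref{relacohomology} with $E=K_X$, together with $\wi{K_X}\otimes\wi{K_X^*}=\mathcal O_{\wi X}$, identifies $H^q(X,K_X\otimes L^{km}\otimes\mathscr J(h^{L^{km}}))$ with $H^q(\wi X,\wi L^{km}\otimes K_{\wi X}\otimes\mathscr J(h^{\wi L^{km}}))$, and Lemma~\ref{modimetric}(2) rewrites the latter as $H^q(\wi X,K_{\wi X}\otimes\widehat L^{k'})$: dividing the exponent $km$ by $m$ leaves remainder $m'=0$, so the auxiliary twist is trivial. Siu's vanishing \eqref{eqSiu} on the compact manifold $\wi X$, applied to the semipositive bundle $\widehat L^{k'}$ that is positive at $\wi x_0$, then gives $H^q(\wi X,K_{\wi X}\otimes\widehat L^{k'})=0$ for all $q\ge 1$, which transports back to the desired vanishing. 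The curvature transfer across $Z(g)$ is routine but must genuinely rest on the smoothness of $\wi\psi$ and a density argument rather than naive continuity of eigenvalues. The step I expect to be the real obstacle is securing the conclusion for \emph{all} $k\ge 1$ rather than only asymptotically, as in Theorem~\ref{singAG62}: this is precisely where the hypothesis $mc\in\N$ is indispensable, for it makes $kmc\in\N$ for every $k$, so that $\mathscr J(h^{\wi L^{km}})$ is the integral divisorial sheaf $\mathcal O(-kmc\,\mathrm{div}(g))$ with no rounding and $\wi L^{km}\otimes\mathscr J(h^{\wi L^{km}})\cong\widehat L^{k'}$ holds as honest line bundles; one then checks that the descent isomorphism, which ultimately stems from the relative vanishing $R^{j}\wi\pi_{*}(K_{\wi X}\otimes\widehat L^{k'})=0$ for $j>0$ (Grauert--Riemenschneider/Takegoshi, valid for each power of the semipositive $\widehat L$), is available for every $k\ge 1$ and not merely in the asymptotic range stated in Lemma~\ref{relacohomology}.
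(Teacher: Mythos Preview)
Your proposal is correct and follows essentially the same route as the paper's proof: blow up via Lemma~\ref{blowup}, transfer the semipositivity and the strict positivity at a point to the smooth curvature $\imat\dbar\ddbar\wi\psi$ of $(\widehat L,h^{\widehat L})$ by the density argument of Theorem~\ref{singlinebundle}, then combine Lemmas~\ref{relacohomology} and~\ref{modimetric} (with $E=K_X$, so $\wi{K_X}\otimes\wi{K_X^*}$ cancels) and apply Siu's smooth vanishing \eqref{eqSiu} on $\wi X$. One small caveat: your chosen $x_0$ could lie in the center $B$ of the modification, so $\wi\pi^{-1}(x_0)$ need not be a single point where $\wi\pi$ is biholomorphic; the paper sidesteps this by working on $\wi U\setminus(Z(g)\cup\wi B)$, and you can do the same by moving $x_0$ slightly within the open set where $c_1(L,h^L)>0$ to avoid the nowhere dense $B$. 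Your discussion of why the hypothesis $mc\in\N$ secures the conclusion for \emph{all} $k\ge1$ (via $\mathscr J(h^{\wi L^{km}})=\mathcal O(-kmc\,\mathrm{div}(g))$ and Grauert--Riemenschneider relative vanishing for $\widehat L^k$) makes explicit what the paper delegates to \cite[(2.3.27), (2.3.32)]{MM}.
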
	 

\begin{proof}
	Consider a local weight $\varphi$ like in (\ref{analyticsingfct}) defined on an open connected subset $U\subset X$. Thus 
	\begin{equation}
	S(h^L)\cap U=\{x\in U: \varphi~\mbox{not smooth at}~x \}=\cap_{j\in J} Z(f_j),
	\end{equation}
	where $Z(f_j):=\{x\in U: f_j(x)=0\}$.
	
	Let $\wi{\pi}:\wi{X}\rightarrow X$ be a proper modification
	of Lemma \ref{blowup}. Note $\wi\pi: \wi X\setminus \wi B\cong X\setminus B$ is biholomorphic and $B\subset X$ and $\wi B\subset \wi X$ are nowhere dense (closed) analytic subsets. The local weight of $(\wi{L},h^{\wi{L}})$ on $\wi{U}:=\wi{\pi}^{-1}(U)$ has the form
	\begin{equation}
	\varphi\circ\wi{\pi}=\frac{c}{2}{\log(\sum_{j\in J}|f_j\circ\wi{\pi}|^2)}+\psi\circ\wi{\pi}=\frac{c}{2}\log|g|^2+\wi{\psi},
	\end{equation}
	where $\wi{\psi}$ is smooth on $\wi{U}$. Thus $Z(g):=\{ z\in \wi U: g(y)=0 \}$ satisfies
	\begin{equation}
	Z(g)=\{ y\in \wi{U}:\varphi\circ\wi{\pi}~\mbox{not smooth at}~y \}=\wi{\pi}^{-1}(S(h^L)\cap U).
	\end{equation}
	
	Firstly, we have known that the semipositivity was preserved by $\wi \pi$ on $\wi X$ in the proof of Theorem \ref{singlinebundle}, i.e., $\sqrt{-1}\dbar\ddbar\wi{\psi}\geq 0$ on $\wi U$.
	
	Secondly, we show the positivity at some point was preserved by $\wi \pi$. Let $U\subset X$ be a neighbourhood of this point such that $(L,h^L)$ is positive on $U\setminus S(h^L)$. Since $\sqrt{-1}\dbar\ddbar\varphi$ is positive on $U\setminus S(h^L)$, 
	and on $\wi{U}\setminus Z(g)$
	\begin{equation}
	\sqrt{-1}\dbar\ddbar \wi{\psi}=\sqrt{-1}\dbar\ddbar(\varphi\circ \wi{\pi}).
	\end{equation}$\sqrt{-1}\dbar\ddbar\wi \psi$ is positive on $\wi U\setminus (Z(g)\cup \wi B)$ which is not empty, since the nowhere dense of $\wi B$ and the analytic subset $Z(g)\subset \wi U$. 
	
	Finally, Lemma \ref{modimetric}(1) $c_1(\widehat{L},h^{\widehat{L}})=\frac{\sqrt{-1}}{2\pi}R^{(\widehat{L},h^{\widehat{L}})}=\frac{m}{\pi}\sqrt{-1}\dbar\ddbar\wi{\psi}$ on $\wi{U}$ implies $(\widehat{L},h^{\widehat{L}})$ is semipositve on $\wi{X}$ and positive at least at one point. By (\ref{relacoheq2}), (\ref{relacoheq1}) and (\ref{eqSiu}) applied to $\wi X$ with $(\widehat L,h^{\widehat{L}})$, we have for $q\geq 1$ and $k\geq 1$, 
	\begin{equation}
	\dim H^q(X, K_X\otimes L^{km}\otimes\mathscr{J}(h^{L^{km}}))=H^q(\wi X, K_{\wi X}\otimes \wi L^{km}\otimes  \mathscr{J}(h^{\wi L^{km}}))=H^q(\wi{X}, K_{\wi X}\otimes \hat{L}^k)=0,
	\end{equation}
	where $m\in \N\setminus\{ 0 \}$ is arbitrary number satisfying $mc\in \N$ (see \cite[(2.3.27), (2.3.32)]{MM}).
\end{proof}

\begin{cor}\label{corsiu1}
	Under the hypothesis of Theorem \ref{singSiu84thm}, if $c\in \N$ in (\ref{analyticsingfct}) additionally, then
	\begin{equation}
	H^q(X,K_X\otimes L\otimes \mathscr{J}(h^L))=0
	\end{equation}
	for $q\geq 1$. In particular, if $c\in \N$ in (\ref{analyticsingfct}) and $L=K_X^*$ additionally, then
	\begin{equation} 
	H^q(X,\mathscr{J}(h^{K_X^*}))=0
	\end{equation}
	for $q\geq 1$, and the Euler characteristic
	\begin{equation}
	\chi(X,\mathscr{J}(h^{K_X^*}))=\dim H^0(X,\mathscr{J}(h^{K_X^*}))\leq 1.
	\end{equation}
\end{cor}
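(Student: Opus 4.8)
The plan is to read off Corollary \ref{corsiu1} directly from Theorem \ref{singSiu84thm}, of which it is essentially a specialization; all the curvature-positivity and proper-modification work has already been done there, so here I only extract consequences. First I would apply Theorem \ref{singSiu84thm} with the particular choice $k=1$ and $m=1$. Since by the extra hypothesis $c\in\N$, the divisibility requirement $mc\in\N$ is met already by $m=1$, and with $k=1$ we have $L^{km}=L$ and $\mathscr{J}(h^{L^{km}})=\mathscr{J}(h^{L})$. Hence the conclusion of the theorem collapses to
\begin{equation*}
H^q(X,K_X\otimes L\otimes\mathscr{J}(h^{L}))=0,\qquad q\geq1,
\end{equation*}
which is the first assertion of the corollary.

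Next I would specialize to $L=K_X^*$. Under this choice $h^L=h^{K_X^*}$, so $\mathscr{J}(h^L)=\mathscr{J}(h^{K_X^*})$, and the adjunction isomorphism $K_X\otimes K_X^*\cong\mathcal{O}_X$ yields $K_X\otimes L\otimes\mathscr{J}(h^L)\cong\mathcal{O}_X\otimes\mathscr{J}(h^{K_X^*})=\mathscr{J}(h^{K_X^*})$ as sheaves. Thus the vanishing just obtained reads $H^q(X,\mathscr{J}(h^{K_X^*}))=0$ for $q\geq1$, the second assertion. For the Euler-characteristic statement I would simply invoke the definition $\chi(X,\mathscr{J}(h^{K_X^*}))=\sum_{q=0}^n(-1)^q\dim H^q(X,\mathscr{J}(h^{K_X^*}))$; every summand with $q\geq1$ now vanishes, so $\chi(X,\mathscr{J}(h^{K_X^*}))=\dim H^0(X,\mathscr{J}(h^{K_X^*}))$.

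Finally, to bound the surviving term I would use that the Nadel multiplier ideal sheaf is by construction an ideal subsheaf of $\mathcal{O}_X$ (its sections are those holomorphic germs satisfying the integrability condition). The inclusion $\mathscr{J}(h^{K_X^*})\hookrightarrow\mathcal{O}_X$ induces an injection $H^0(X,\mathscr{J}(h^{K_X^*}))\hookrightarrow H^0(X,\mathcal{O}_X)$, and since $X$ is compact and connected, $H^0(X,\mathcal{O}_X)=\C$ is one-dimensional. Therefore $\dim H^0(X,\mathscr{J}(h^{K_X^*}))\leq1$, completing the proof.

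I do not expect any genuine obstacle here, as the analytic heart of the matter is already contained in Theorem \ref{singSiu84thm}. The only points meriting care are bookkeeping: verifying that $c\in\N$ legitimizes the choice $m=1$ so that no higher power of $L$ intrudes into the statement, and being careful that the identification $K_X\otimes L\otimes\mathscr{J}(h^L)\cong\mathscr{J}(h^{K_X^*})$ for $L=K_X^*$ is a literal equality of subsheaves of $\mathcal{O}_X$ rather than merely an abstract isomorphism, so that the embedding into $H^0(X,\mathcal{O}_X)=\C$ is the intended one.
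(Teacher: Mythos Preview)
Your proposal is correct and follows the same approach as the paper, which simply writes ``Take $m=k=1$ in Theorem \ref{singSiu84thm}.'' You have supplied the details the paper leaves implicit, in particular the justification of the bound $\dim H^0(X,\mathscr{J}(h^{K_X^*}))\leq 1$ via the inclusion $\mathscr{J}(h^{K_X^*})\hookrightarrow\mathcal{O}_X$ and $H^0(X,\mathcal{O}_X)=\C$ on the compact connected manifold $X$.
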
 

\begin{proof}
	Take $m=k=1$ in Theorem \ref{singSiu84thm}.
\end{proof}
                                            
\section*{Acknowledgements} 
The author thanks Professor Xianzhe Dai, Professor George Marinescu for helpful discussion and enlightened comments. This work is partially supported by Albert's Researcher Reunion Grant of University of Cologne.


\end{document}